\newtheorem*{prb}{Problem}
\newtheorem{thm}{Theorem}[section]
\newtheorem{prop}[thm]{Proposition}
\newtheorem{lem}[thm]{Lemma}
\newtheorem{cor}[thm]{Corollary}
   \theoremstyle{definition}
\newtheorem{defn}[thm]{Definition}
   \theoremstyle{remark}
\newtheorem{ex}[thm]{Example}
\newtheorem{rem}[thm]{Remark}
\newtheorem{ntn}[thm]{Notation}
\newcommand{\mba}{\mathbb{A}}
\newcommand{\mbc}{{\mathbb{C}}}
\newcommand{\mbd}{\mathbb{D}}
\newcommand{\mbg}{\mathbb{G}}
\newcommand{\mbl}{\mathbb{L}}
\newcommand{\mbn}{\mathbb{N}}
\newcommand{\mbp}{\mathbb{P}}
\newcommand{\mbq}{\mathbb{Q}}
\newcommand{\mbr}{\mathbb{R}}
\newcommand{\mbz}{\mathbb{Z}}
\newcommand{\ra}{\rightarrow}
\newcommand{\lra}{\longrightarrow}
\newcommand{\mcd}{\mathcal{D}}
\newcommand{\mcg}{\mathcal{G}}
\newcommand{\mch}{\mathcal{H}}
\newcommand{\mci}{\mathcal{I}}
\newcommand{\mck}{\mathcal{K}}
\newcommand{\mcl}{\mathcal{L}}
\newcommand{\mcm}{\mathcal{M}}
\newcommand{\mcn}{\mathcal{N}}
\newcommand{\mco}{\mathcal{O}}
\newcommand{\mcs}{\mathcal{S}}
\newcommand{\mcv}{\mathcal{V}}
\newcommand{\hatV}{{\widehat V}}
\newcommand{\p}{\partial}
\def\schluss{\hfill\ensuremath{\diamond}}
\newcommand{\hatmcm}{{\widehat\mcm}}
\newcommand{\boldone}{{\mathbf{1}}}
\newcommand{\bolda}{{\mathbf{a}}}
\newcommand{\boldb}{{\mathbf{b}}}
\newcommand{\boldk}{{\boldsymbol{k}}}
\newcommand{\boldu}{{\mathbf{u}}}
\newcommand{\boldv}{{\mathbf{v}}}
\newcommand{\del}{\partial}
\newcommand{\dual}{*}
\newcommand{\image}{\operatorname{im}}
\newcommand{\into}{\hookrightarrow}
\newcommand{\onto}{\twoheadrightarrow}
\renewcommand{\to}{\longrightarrow}
\newcommand{\rmM}{{\mathrm{M}}}
\newcommand{\rmR}{{\mathrm{R}}}
\newcommand{\minus}{\smallsetminus}
\newcommand{\calD}{\mathcal{D}}
\newcommand{\calH}{\mathcal{H}}
\newcommand{\calL}{\mathcal{L}}
\newcommand{\calM}{\mathcal{M}}
\newcommand{\calO}{\mathcal{O}}
\newcommand{\rmD}{\mathrm{D}}
\newcommand{\frakm}{{\mathfrak{m}}}
\newcommand{\frakt}{{\mathfrak{t}}}
\newcommand{\frakx}{{\mathfrak{x}}}
\newcommand{\fraky}{{\mathfrak{y}}}
\newcommand{\frakz}{{\mathfrak{z}}}
\renewcommand{\AA}{\mathbb{A}}
\newcommand{\CC}{\mathbb{C}}
\newcommand{\DD}{\mathbb{D}}
\newcommand{\GG}{\mathbb{G}}
\newcommand{\NN}{\mathbb{N}}
\newcommand{\QQ}{\mathbb{Q}}
\newcommand{\RR}{\mathbb{R}}
\newcommand{\ZZ}{\mathbb{Z}}
\newcommand{\partitions}{\dashv}
\newcommand{\TVmap}{h}
\newcommand{\TXmap}{\varphi}
\newcommand{\XVmap}{i}
\DeclareMathOperator{\cone}{\textup{cone}}
\DeclareMathOperator{\Cone}{\textup{Cone}}
\DeclareMathOperator{\Dmod}{\textup{Dmod}}
\DeclareMathOperator{\FL}{\textup{FL}}
\DeclareMathOperator{\gr}{\textup{gr}}
\DeclareMathOperator{\IC}{\textup{IC}}
\DeclareMathOperator{\id}{\textup{id}}
\DeclareMathOperator{\HH}{\textup{H}}
\DeclareMathOperator{\IH}{\textup{IH}}
\DeclareMathOperator{\ih}{\textup{ih}}
\DeclareMathOperator{\Int}{\textup{Int}}
\DeclareMathOperator{\MHM}{\textup{MHM}}
\DeclareMathOperator{\pr}{\textup{pr}}
\DeclareMathOperator{\qdeg}{\textup{qdeg}\,}
\DeclareMathOperator{\sRes}{\textup{sRes}\,}
\DeclareMathOperator{\supp}{\textup{supp}}
\DeclareMathOperator{\Spec}{\textup{Spec}\,}
\newcommand\oldtext[1]{}
\newcommand\comment[1]{}
\newcommand{\mustata}{{Musta{\c{t}}{\v{a}}}}
\numberwithin{equation}{subsection}
\begin{document}
\title{Weight filtrations on GKZ-systems
}
\author{Thomas Reichelt}
\address{
T.~Reichelt\\
Lehrstuhl für Algebraische Geometrie \\
Universit\"at Mannheim\\
B6 26\\
68159 Mannheim\\
Germany}
\email{reichelt@math.uni-mannheim.de}

\author{Uli Walther}
\address{ U.~Walther\\
  Purdue University\\
  Dept.\ of Mathematics\\
  150 N.\ University St.\\
  West Lafayette, IN 47907\\ USA}
\email{walther@math.purdue.edu}

\begin{abstract} 

Given an integer matrix $A\in\ZZ^{d\times n}$, we study the natural
  mixed Hodge module structure in the sense of Saito on the
  Gau\ss--Manin system attached to the monomial map
  $\TVmap\colon(\CC^*)^d\to\CC^n$ induced by $A$. We completely
  determine in the normal case the associated graded object to the
  weight filtration, by
  computing the intersection complexes with respective multiplicities
  that form its constituents. Our results show that
  these data are purely combinatorial, and not arithmetic, in the
  sense that they only depend on the polyhedral structure of the cone
  of $A$, but not on the semigroup itself. In particular, we extend
  results of de Cataldo, Migliorini and \mustata\ to the setting of
  torus embeddings and give a closed form for the failure of the
  Decomposition Theorem in our context.

If $A$ is homogeneous and if $\beta\in\CC^d$ is an integral but not
strongly resonant parameter, we use a monodromic
Fourier--Laplace transform to carry the mixed Hodge module structure
from the Gau\ss--Manin system to the GKZ-system attached to $A$ and
$\beta$. In case $A$ is derived from a normal reflexive Gorenstein
polytope $P$, Batyrev and Stienstra related certain filtrations on the
generic fiber of the GKZ-system to the mixed Hodge structure on the
cohomology of a generic hyperplane section inside the projective toric
variety induced by $P$. Our formul\ae, phrased in terms of intersection
cohomology groups on induced relative toric varieties, provide the
necessary correction terms to globalize their computation. In
particular, we document that on the GKZ-system the weight filtration
will differ from Batyrev's filtration-by-faces whenever $P$ is not a
simplex: the intersection complexes contributing to the weight
filtration measure the failure of $P$ to be a simplex.

Irrespective of homogeneity, we obtain a purely combinatorial formula
for the length of the Gau\ss--Manin system, and thus for
the corresponding GKZ-system.  In dimension up to three, and for
simplicial semigroups, we give explicit generators of the weight
filtration.

\end{abstract}

\keywords{Gauss--Manin, toric, hypergeometric,
    Euler--Koszul, D-module, Laurent polynomial, equivariant, Hodge
    module, weight}

\subjclass{13N10,14M25,32C38,32S40,32S35,33C70}

\thanks{TR was supported by  DFG Emmy-Noether-Fellowship RE 3567/1-1}
\thanks{UW was
supported by NSF grants 1401392-DMS and 2100288-DMS, and by Simons Foundation
Collaboration Grant for Mathematicians \#580839.}

\maketitle

\tableofcontents

\section{Introduction}

\subsection{The Decomposition Theorem for proper maps}

One of the hallmarks of Hodge-theoretic results in algebraic geometry
is the Decomposition Theorem. For smooth projective maps
between smooth projective varieties this asserts among other things
the degeneration of the Leray spectral sequence for $\QQ$-coefficients
on the second page.  Decomposition Theorems are refinements and
generalizations of the Hard Lefschetz Theorem
for projective varieties; the key ingredient is the purity of the
Hodge structure on cohomology. In this article we study and quantify
an important instance of the failure of purity, and of the Decomposition
Theorem.
In order to state our results, we give the briefest of historical surveys,
and we point to the excellent  account
\cite{CataldoMigliorini-Bulletin} for details.

For singular maps and varieties, things can be rescued by replacing
usual cohomology with intersection cohomology, and in both
instances the statement has a local flavor in the sense that one can
restrict to open subsets of the target. The advantage of intersection
cohomology is that it has nice formal properties such as Poincar\'e
duality, Lefschetz theorems, and K\"unneth formula. While it is not a
homotopy invariant, there is a natural transformation $\HH^i\to\IH^i$
that is an isomorphism on smooth spaces and in general induces a
$\HH^\bullet$-module structure on $\IH^\bullet$. This version of the
Decomposition Theorem,
conjectured by S.~Gel'fand and R.~MacPherson, was proved by
A.~Beilinson, J.~Bernstein, P.~Deligne and O.~Gabber.

The construction allows for generalization of intersection cohomology
to coefficients in a local system $L_U$, defined on a locally closed
subset $U\subseteq Z=\bar U$. The intersection complex of such a local
system is a constructible complex that extends $L_U$ as a constructible
complex (or the corresponding connection on $U$ as $D$-module). In fact,
the best form of the Decomposition Theorem in the projective case is
in this language: if $f\colon X\to Y$ is a proper map of complex
algebraic varieties then $Rf_*\IC_X$ splits (non-canonically) as a
direct sum of intersection complexes whose supporting sets are induced
from a stratification of $f$.

A particularly interesting case where the decomposition theorem has been well-studied are semi-small maps (cf. \cite{CataldoMigliorini-Bulletin} for a nice survey). These maps arise often in geometric situations:
\begin{itemize}
\item the Springer resolution $f\colon \widetilde{\mcn}\to \mcn$ of the nilpotent cone $\mcn$  of the Lie algebra to the reductive group $G$; 
\item the Hilbert--Chow map between the Hilbert schemes of points $X= (\mbc^2)^{[n]}$ and the $n$-th symmetric product $Y=(\mbc^2)^n / \mcs_n$.  
\end{itemize}


The most explicit
case is perhaps that of a fibration $f\colon X\to Y$ between toric complete
varieties: $\IH^\bullet$ of a complete toric variety can
be written down in purely combinatorial terms, and
\cite{deCataldoMiglioriniMustata} spells out how to write 
$Rf_*(\IC_X)$ as sum of
intersection complexes in terms of face numbers.



\subsection{Non-proper maps}

The moment one moves away from proper maps, direct images of
intersection complexes need no more split into sums of such. For
example, embedding $\CC^*$ into $\CC=\CC^*\sqcup \{pt\}$ leads to a
push-forward $Rf_* \mco_{\mbc^*}$ that naturally contains
$\calO_{\CC^1}$ but there is a nontrivial
cokernel of the form $\calO_{pt}$. At this point one
requires a ``weight'' filtration on $Rf_*\calO_{\CC^*}$ akin to the
one that forms part of Deligne's construction of mixed
Hodge structures on the cohomology of complex varieties.
In the case $\CC^*\into \CC$, level 1 of the weight filtration on $Rf_*(\calO_{\CC^*})$
is $\calO_{\CC^1}$; level 2 is the entire image.

The appropriate powerful hybrid of intersection complexes and
Deligne's weights was constructed by M.~Saito in his theory of mixed
Hodge modules, 
inspired by
the theory of weights for $\ell$-adic sheaves
\cite{SaitoMHM}.
The weight filtration, together with a ``Hodge filtration'' that can
be seen as avatar of the usual Hodge filtration on cohomology, form
the main ingredients of an object in Saito's category of mixed Hodge
modules. For maps between quasi-projective varieties he introduced a natural
geometric filtration on $Rf_*\IC_X$. For proper maps between algebraic
varieties, the weight filtration on $Rf_* \IC_X$ is pure and in particular there is a
Decomposition Theorem: $Rf_*\IC_X$ splits into intersection complexes
and the splitting occurs in the category of mixed Hodge modules.

%
%
%

In several natural
situations properness is not available, and this necessitates
nontrivial weights.
Saito's theory shows that
in general the associated graded pieces of the weight filtration of any
push-forward of a mixed Hodge module split as sums of intersection
complexes, while  for
maps to a point the construction agrees with Deligne's weights.

One is naturally led to a very hard
question, crucial to Saito's theory, on the behavior of pure Hodge
modules under open embeddings.
In the world of toric varieties,
once one gives up on complete fans, the most fundamental situation is
the inclusion of an embedded torus into its (likely singular) closure:
\begin{prb}[Weight Decomposition for Open Tori]
Let $T=(\CC^*)^d$ and consider the monomial map
\begin{eqnarray}\label{eq:h-map}
  \TVmap\colon T&\to& \CC^n=:\hatV\\
  (\frakt_1,\ldots,\frakt_d)=:\frakt&\mapsto& \frakt^A:=(\frakt^{\bolda_1},\ldots,\frakt^{\bolda_n})\nonumber
\end{eqnarray}
where
\[
A=(\bolda_1,\ldots,\bolda_n)\in(\ZZ^d)^n
\]
is an integer  $d\times n$ matrix.
Determine the weight filtration
$\{W_i\}_i$ on $\TVmap_+(\calO_T)$, and for each associated graded
quotient $W_i/W_{i-1}$ indicate the intersection complexes (support and
coefficients) that appear as direct summands of this module.
\end{prb}

To our knowledge, the only other place where
non-proper maps have been studied in this context is the article
\cite{CautisDoddKamnitzer} on certain open subsets of
products of Grassmannians.

\subsection{Results and techniques}

Throughout, $A$ is an integer $d\times n$ matrix satisfying the three
conditions of Notation \ref{ntn-A} and we consider the induced monomial
action of $T$ on $\hatV=\CC^n$ given by the Hadamard product
\begin{eqnarray}\label{eq:T-action}
\mu\colon T\times \hatV &\to& \hatV,\\
(\frakt,\fraky) &\mapsto& \frakt^A\star\fraky:=(\frakt^{\bolda_1}\fraky_1,\ldots,\frakt^{\bolda_n}\fraky_n).
\end{eqnarray}
With $\sigma=\RR_{\geq 0}A$ and any face $\tau$ of $\sigma$,  define
$\boldone_\tau\in\CC^n$ by $(\boldone_\tau)_j=1$ if
$\bolda_j\in\tau$ and zero otherwise. 
Denote $X_\sigma$ (or just $X$) the closure $\Spec(\CC[\NN A])$ of the $T$-orbit through
$\boldone_\sigma=(1,\ldots,1)\in\CC^n$. There is an orbit decomposition 
\[
X=\bigsqcup_{\tau}O_\tau
\]
where the union is over the faces $\tau$ of $\sigma$ and
$O_\tau=\mu(T,\boldone_\tau)$ is the orbit corresponding to $\tau$.
Denote $\QQ_\tau$ the constant
sheaf on the orbit to $\tau$ and write ${}^p\QQ^H_\tau$ for the
corresponding (simple, pure) Hodge module.
With $X$ as in Section
\ref{sec:weightontorus}, $\TVmap$ factors as
\begin{gather}\label{map-factorization}
  \xymatrix{T \ar[r]^{\displaystyle{\TXmap}}
    \ar@/_1pc/[rr]_-{\displaystyle{\TVmap}}
    & X \ar[r]^-{\displaystyle{\XVmap}} &\mbc^n=\hatV}
\end{gather}
Via Kashiwara equivalence along $\XVmap$, one identifies the mixed
Hodge modules on $X$ with those on $\hatV$ supported on $X$.  The
following is a brief list of the  results that we prove in Section
\ref{sec:weightontorus}.

\begin{asparaenum}
  \item Since $\calO_T$ is a (strongly) torus equivariant
    $\calD_T$-module, the $T$-equivariant map $\TVmap$ will produce
    (strongly) equivariant modules $R^i\TVmap_+(\calO_T)$ (and only
    the $0$-th one
    is nonzero since $h$ is affine). The intersection complexes
    appearing in the decomposition of the weight graded parts of
    $R\TVmap_*({}^p\QQ^H_\sigma)$ are equivariant, supported on 
    orbits. The underlying local systems are constant, of the form
    ${}^p\QQ^H_\tau$.

\item We show that two specific functors are isomorphic on equivariant
  sheaves with contracting torus actions. Using this identification,
  we provide a recursive recipe for the exceptional pullback
  $\calH^k
  i^!_\tau(\TVmap_*{}^p\QQ^H_\sigma/W_i\TVmap_*{}^p\QQ^H_\sigma)$ to
  an arbitrary orbit of the monomial action from \eqref{eq:T-action}.

\item We unravel the recursion for $\calH^0
  i^!_\tau\TVmap_*{}^p\QQ^H_\sigma$, for every $\tau$, to provide an
  explicit expression for the multiplicity $\mu^\sigma_\tau(e)$ of the
  constant local system ${}^p\QQ^H_\tau$ in the $(d+e)$-th graded
  weight part of $\TVmap_*{}^p\QQ^H_\sigma$ in terms of an alternating
  sum whose constituents are indexed by flags in the face lattice of
  $\sigma$, see Proposition \ref{prop-chains}. The terms involve
  intersection cohomology dimensions of the affine toric varieties
  $X_{\tau/\gamma}$ associated to the semigroup of the cone
  \[
  \tau/\gamma:=(\tau+\RR\gamma)/\RR\gamma.
  \]

  \item Using some results on intersection cohomology of toric varieties
    by Stanley, and Braden and MacPherson, we express
    $\mu^\sigma_\tau(e)$ as a single intersection cohomology rank on the
    dual affine toric variety $Y_{\sigma/\tau}$, associated to the
    dual of $\sigma/\tau$, see Theorem \ref{thm:weightonhQ}.
\end{asparaenum}

\medskip

There are some immediate noteworthy consequences. First of all,
$\mu^\sigma_\tau(e)$ is a relative quantity in the sense that
$\mu^\sigma_\tau(e)=\mu^{\sigma/\gamma}_{\tau/\gamma}(e)$ for any face
$\gamma$ inside $\tau$. Secondly, the arithmetic properties of
$\sigma$ are inessential: the only information relevant for
$\mu^\sigma_\tau(e)$ is the combinatorics of the polytope obtained
from $\sigma/\tau$ by slicing it with a transversal hyperplane that
``cuts off the vertex''. This is
because the intersection cohomology numbers of $Y_{\sigma/\tau}$ are
entirely combinatorial.

\subsection{Consequences, applications, open problems}

\subsubsection{Hodge structures on GKZ-systems}
Some interesting consequences of (1)-(4) above come from applying
these results to the Fourier--Laplace transform of $\TVmap_+ \mco_T$, a
well-studied $D$-module all by itself.

We briefly recall the notion of an $A$-hypergeometric system in our
setup. Let $R_A=\CC[\del_1,\ldots,\del_n]$ be the polynomial ring
where $\del_i$ stands for the partial differential operator
$\frac{\del}{\del x_i}$. Then denote
$D_A=R_A\langle x_1,\ldots,x_n\rangle$ the Weyl algebra and pick
$\beta\in \CC^d$. Now consider the
left ideal $H_A(\beta)$ of $D_A$ generated by
\[
I_A:=R_A(\{\del^\boldu-\del^\boldv\mid \boldu,\boldv\in\NN^n,\,
A\cdot\boldu=A\cdot\boldv\})
\]
and all
\[
E_i-\beta_i:=\sum_{j=1}^n a_{i,j}x_j\del_j-\beta_i\qquad \text{with }i=1\ldots,d.
\]
The module
\[
M_A^\beta:=D_A/H_A(\beta)
\]
is the $A$-hypergeometric system induced by $A$ and $\beta$
introduced by Gel'fand, Graev, Kapranov and Zelevinsky in
the 1980's.  We refer to \cite{SST}, the surveys
\cite{Sti, RSSW},  and the current literature for more
information on these modules, but highlight some properties.

The \emph{strongly
resonant quasi-degrees} $\sRes(A)$ of $A$ form an infinite discrete hyperplane
arrangement in $\CC^d$ which was introduced in
\cite{SchulzeWalther-ekdi} and used to sharpen a result of Gel'fand et
al.\ by showing that $\beta\not\in\sRes(A)$ is equivalent to
$M_A^\beta$ being the Fourier--Laplace transform of $\TVmap_+(\calO_T^\beta)$
where $\calO_T^\beta$ is described before Theorem
\ref{thm-SWekdi}. In fact, it was Gel'fand and his collaborators that
first observed a connection between $A$-hypergeometric systems and
intersection complexes in \cite[Prop.~3.2]{GKZ-Euler}.

If the semigroup ring $S_A:=\CC[\NN A]\simeq R_A/I_A$ is normal (or,
equivalently, if the semigroup $\NN A$ is saturated in $\ZZ A$) then
$0$ is not strongly resonant. In particular then, the inverse Fourier--Laplace
transform of $M_A^0$ is the module $\TVmap_+(\calO_T)$ from Section \ref{sec:weightontorus}. Since the Fourier--Laplace transform is an
equivalence of categories, our results on $\TVmap_+(\calO_T)$ solve for
normal $S_A$ the longstanding problem of determining the composition
factors for $M_A^0$.

The Fourier--Laplace transform does not necessarily preserve mixed
Hodge module structures in general. However, if one assumes that $I_A$
defines a projective variety, one can use the monodromic
Fourier--Laplace transform which produces the same output as the
Fourier--Laplace transform on $\TVmap_+(\calO_T)$ and does carry mixed
Hodge module structures. In particular, this equips $M_A^0$ with a
natural mixed Hodge module structure inherited from $\TVmap$ (cf. \cite{Reich2}).

There is a filtration-by-faces on a GKZ system, defined via the face
filtration on the semigroup ring: the $(d+k)$-th level of this
filtration is the submodule of $M_A^0$ generated by all monomials
$\del^\boldu\in S_A$ for which $A\cdot \boldu$ is not contained in a
face of dimension $d-k-1$. This filtration was introduced by Batyrev
in his study of the Hodge structure on the cohomology of a generic
hypersurface in a toric variety constructed from a polytope
\cite{Bat4,Sti}. Adolphson and Sperber, and more recently Fang, also
considered the face filtration in \cite{AS,Fang}. We show that this
filtration is bounded above by the weight filtration, and that it
really differs from it for all GKZ-systems whose semigroup cone is not
the cone over a simplex. On can view the error terms that we find as
the necessary ``glue'' that is required to globalize the result of
Batyrev and Stienstra from the generic fiber to the entire
GKZ-system. On the other hand, looking at $\TVmap_+(\calO_T)$, we show
that the corresponding filtration-by-faces always captures the part of
the weight filtration that has maximal dimensional support.

\subsubsection{Applications}

We outline two possible applications of our results;
one is concerned with mirror symmetry, the other comes from commutative
algebra. 

\emph{Local cohomology at toric varieties}: Let $\hat
R=\CC[x_1,\ldots,x_n]$ and suppose $\hat I$ is an ideal of $\hat R$
such that $\hat R/\hat I$ is the semigroup ring $\CC[\NN A]$ for some
matrix $A$ as above. Let $\hat J$ denote the ideal of the variety
comprised of the smaller torus orbits of the variety of $\hat I$. Then
there is a natural triangle
\[
\to \RR\Gamma_{\hat J}(\hat R)\to\RR\Gamma_{\hat I}(\hat R)\to \hat
M_A^0[-c]\stackrel{+1}{\to}
\]
in the category of mixed Hodge modules where the first morphism is the
canonical one and $\hat M_A^0$ is the inverse Fourier--Laplace
transform of $M_A^0$ (\emph{i.e.}, $h_+(\calO_T)$ up to shift). In the
normal case this sequence degenerates and perhaps an inductive
procedure can be used to determine from our formul\ae\ for $\hat
M_A^0$ the intersection complexes in the weight filtration of
$\HH^\bullet_{\hat I}(\hat R)$. In particular, their vanishing (which
at present is an open problem) might be computable.

\emph{Mirror symmetry}: Let $Y_\Sigma$ be a toric variety induced by
the fan $\Sigma$.  The secondary fan of $Y_\Sigma$ induces a toric
variety $M$ and a family of Laurent polynomials over a Zariski open
subset of $M$. This family is known as the Landau--Ginzburg model of
$\Sigma$ and encodes the Gromov--Witten invariants of
$Y_\Sigma$. It turns out that the information relevant to
Gromov--Witten invariants is contained in the smallest weight part of
the Gau\ss --Manin system, compare \cite{Givental1,Givental2,Iritani,ReiSe2,ReiSe3} . It is conjectured that the parts of higher
weight describe mirror symmetry for toric degenerations such as flag
manifolds \cite{IritaniXiao}.
Our results here give concrete data on the GKZ side which one should
want to match to those toric degenerations.

\subsubsection{Open problems}

When $S_A$ is normal, the holonomic rank of $M_A^\beta$ (the dimension
of the holomorphic solution space in a generic point) equals the
volume of the convex hull of the columns of $A$ together with the
origin. In particular, this is an arithmetic quantity, not just
determined by the combinatorics. In contrast, our results show that
the holonomic length is purely combinatorial in that case; it only
depends on the $\mathbf{cd}$-index (see \cite{BayerKlapper-cdIndex})
of the polytope over which $\sigma$ is the cone. This suggests a new
question that deserves study: what is the rank, and more generally the
characteristic cycle, of the Fourier--Laplace transformed intersection
complex $\IC_{X_\tau}$? 
Our results allow for small $d$ direct
calculation of the rank of $\FL(\IC_{X_\tau})$ 
to any chosen
face. In higher dimension one can write down recursions, but making
them explicit is an open question.

Further, having a saturated composition chain for a
$D$-module informs on the irreducible representations in the monodromy
of the solution sheaf. Studying $\FL( \IC_{X_\tau})$ would be the
first step towards a general understanding of the monodromy of
$M_A^0$.

Finally, one should investigate whether one can place mixed Hodge
module structures also on $M_A^\beta$ for other $\beta$. Obviously,
this is doable in the normal case with $\beta\in\NN A$ since
\cite{SchulzeWalther-ekdi} implies that the corresponding $M_A^\beta$
are isomorphic to $M_A^0$ via contiguity operators. Similarly, dual
ideas reveal that for $\beta$ integral and in the cone roughly
opposite to $\NN A$, $M_A^\beta$ agrees with the Fourier--Laplace
transform of $\TVmap_!(\calO_T^\beta)$ and hence also inherits a mixed
Hodge module structure, dual to the one discussed here. For other
integral $\beta$, \cite{Avi,Avi2} describes $\FL^{-1}(M_A^\beta)$ as a
composition of a direct and exceptional direct image, which can be
used to export a MHM structure. Less clear are non-integral $\beta$:
the use of complex Hodge modules allows to equip $M_A^\beta$ with
$\beta\in\RR^d$ with a MHM structure, see Sabbah's MHM project
\cite{Sabbah-webpage}. For certain $\beta$ the Hodge filtration on
$M_A^\beta$ is explicitly computed in \cite{ReiSe3}.

\subsection{Acknowledgments}

We would like to thank Qianyu Chen and Bradley Dirks for catching an
indexing error, and them as well as Andra\'s L\H{o}rincz, Takuro
Mochizuki, Mircea \mustata, Claude Sabbah and Duco van Straten for
helpful and engaging conversations about this work. We are indebted to
Avi Steiner for a very careful reading which resulted in many
improvements, both technical and editorial. We wish to thank the
referee for useful suggestions and corrections.

\section{Functors on $\calD$-modules}

\label{sec:Dmods}
If $K$ is a free Abelian group of finite rank, or a finite dimensional vector
space, then we write $K^\dual$ for the dual group or vector space.

We introduce the following notation. Let $X$ be a smooth complex
algebraic variety of dimension $d_X$.  The Abelian category of
algebraic left $\mcd_X$-modules on $X$ is denoted by $\rmM(\mcd_X)$
and the Abelian subcategory of (regular) holonomic $\mcd_X$-modules by
$\rmM_h(\mcd_X)$ (resp. $(\rmM_{rh}(\mcd_X))$. We abbreviate
$\rmD^b(\rmM(\calD_X))$ to $\rmD^b(\calD_X)$, and denote by
$\rmD^b_{h}(\mcd_X)$ (resp.\ $\rmD^b_{rh}(\mcd_X)$) the full
triangulated subcategory in $\rmD^b(\mcd_X)$ consisting of objects
with holonomic (resp.\ regular holonomic) cohomology.

Let $f: X \ra Y$ be a morphism between smooth algebraic varieties
and let $M \in \rmD^b(\mcd_X)$ and $N \in \rmD^b(\mcd_Y)$. The direct
and inverse image functors for $\mcd$-modules are denoted by
\[
f_+ M := \rmR f_*(\mcd_{Y \leftarrow X} \overset{L}{\otimes} M) \quad
\text{and}\quad f^+ M:= \mcd_{X \ra Y} \overset{L}{\otimes} f^{-1}M
     [d_X - d_Y]
\]
respectively.  The functors $f_+$ and $f^+$ preserve (regular) holonomicity
(see e.g., \cite[Theorem 3.2.3]{Hotta}).

\noindent We denote by
\[
\mbd: \rmD^b_h(\mcd_X) \ra (\rmD^b_h(\mcd_X))^{opp}
\]
the holonomic duality functor.  Recall that for a single holonomic
$\mcd_X$-module $M$, the holonomic dual is also a single holonomic
$\mcd_X$-module (\cite[Proposition 3.2.1]{Hotta}) and that holonomic
duality preserves regularity (\cite[Theorem 6.1.10]{Hotta}).

For a morphism $f: X \ra Y$ between smooth algebraic
varieties we additionally define the functors
\[
f_\dag := \mbd \circ
f_+ \circ \mbd\quad \text{ and }\quad f^\dag := \mbd \circ f^+ \circ \mbd.
\]

Let $X$ be an algebraic variety. Denote by $\MHM(X)$ the Abelian
category of algebraic mixed Hodge modules and by $\rmD^b \MHM(X)$ the
corresponding bounded derived category, compare \cite{SaitoMHM,
  SaitoOnMHM}. If $X$ is smooth the forgetful functor to the bounded
derived category of regular holonomic $\mcd_X$-modules is denoted by
\begin{gather}\label{def-Dmod}
\Dmod: \rmD^b \MHM(X) \lra \rmD^b_{rh}(\mcd_X)\, .
\end{gather}
\noindent For each morphism $f:X \ra Y$ between complex algebraic
varieties, there are induced functors
\[
f_*,\,f_!\colon \rmD^b \MHM(X) \lra \rmD^b \MHM(Y)
\]
and
\[
f^*,\, f^!\colon \rmD^b \MHM(Y) \ra \rmD^b \MHM(X)\, ,
\]
which satisfy $\DD\circ f_*=f_!\circ\DD$, $\DD\circ f^*=f^!\circ\DD$,
and which lift the analogous functors $f_+, f_\dag, f^\dag, f^+$ on
$\rmD^b_{rh}(\mcd_X)$ in case $X$ is smooth.

Let $\mbq^H_{pt}$ be the trivial Hodge structure $\mbq$ of type
$(0,0)$, i.e. $\gr^W_i \mbq^H_{pt} = \gr^F_i \mbq^H_{pt} = 0$ for $i
\neq 0$. Viewing it as a Hodge module on a point $pt$, denote by
${^p}\mbq^H_X:= \mbq_X^H[d_X]$ the (mixed) Hodge module $(a^*
\mbq^H_{pt})[d_X]$, where
\[
a: X \ra {pt}
\]
is the unique map to a point.
For smooth $X$ the 
$\mcd$-module underlying ${^p}\mbq_X^H$ is the structure sheaf $\mco_X$ in
cohomological degree zero with $\gr^W_i \mco_X = 0$ for $i \neq d_X$.

Let $j:U \ra X$ be any Zariski dense smooth open subset of $X$ and
let $\mcl$ be a polarizable variation of Hodge structures (that is to
say, a
  vector bundle with a flat connection $\nabla$ such that each fiber
  carries a Hodge structure, with $\nabla(F_p)\subseteq F_{p+1}$ for
  increasing filtrations, and a global polarization pairing) of
weight $w$. Set \[
{^p}\mcl := \mcl \otimes {^p}\mbq^H_U.
\]
We denote by
$\IC_X({^p}\mcl)$ the intersection cohomology complex with coefficients
in ${^p}\mcl$; this is a pure Hodge module of weight $w +d_X$ equal to
$\image (\mch^0 j_!  {^p}\mcl \ra \mch^0 j_* {^p}\mcl)$.  We write
$\IC_X$ for $\IC_X({^p}\mbq^H_U)$; this does not depend on $U$, see
\cite[Thm.\ 5.4.1, p.\ 156]{Di}.


\begin{lem}\label{lem:propinvimage}
Let $(X,\mcs)$ be an algebraic Whitney stratification of $X$ with a
Zariski dense smooth open stratum $U$. Denote by $i_S: S \ra X$ the
embedding of the stratum $S \in \mcs$ in $X$ and let ${^p}\calL$ be as
above. The following holds for
morphisms in $\MHM$:
\begin{enumerate}[(1)]
\item $i^!_S$ is left exact for every $S \in \mcs$ and does not
  decrease weights. (In other words, if $W_{\le k}(\calM)=0$ then
  $W_{\le k}i^!_S(\calM)=0$).
\item\label{propinvimage-2} $\mch^0 i_U^! \IC_X({^p}\mcl) = {^p}\mcl$ and $\mch^k i_U^!
  \IC_X({^p}\mcl) = 0$ for $k \neq 0$.
\item\label{propinvimage-3} $\mch^0 i_S^!\IC_X({^p}\mcl) = 0 $ for $ U \neq S$.
\end{enumerate}
\end{lem}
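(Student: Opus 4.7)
The plan is to derive all three assertions from Saito's six-functor formalism on mixed Hodge modules \cite{SaitoMHM}, together with the standard recollement characterization of the intermediate extension $j_{!*}$.

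For (1), I would factor the locally closed inclusion of a stratum as $i_S = \ell_S \circ k_S$, with $k_S\colon S \hookrightarrow \overline{S}$ the open inclusion of the stratum into its closure and $\ell_S\colon \overline{S} \hookrightarrow X$ the closed embedding. Then $k_S^! = k_S^*$ is t-exact, while $\ell_S^!$ is left t-exact by the recollement triangle for $\overline{S}$ and its open complement, so $i_S^! = k_S^* \circ \ell_S^!$ is left t-exact. For the weight monotonicity I would cite Saito's theorem that $f^!$ preserves the subcategory $D^b_{\geq w}\MHM$ for every morphism $f$.

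For (2), the map $i_U = j$ is the open embedding of the dense smooth stratum, so $i_U^! = i_U^*$ is t-exact; in particular $\mathcal{H}^k i_U^! \IC_X({}^p\mathcal{L}) = 0$ for $k \neq 0$. Using the defining presentation $\IC_X({}^p\mathcal{L}) = \image\bigl(\mathcal{H}^0 j_! {}^p\mathcal{L} \to \mathcal{H}^0 j_* {}^p\mathcal{L}\bigr)$, the identities $j^* j_! = j^* j_* = \id$, and the fact that $j^*$ is exact and therefore commutes with images in the heart, one obtains $j^* \IC_X({}^p\mathcal{L}) = {}^p\mathcal{L}$.

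For (3), set $Z := X \setminus U$ and let $i_Z\colon Z \hookrightarrow X$ be the complementary closed embedding. The standard characterization of $j_{!*}$ (equivalent to the image description above) gives $\mathcal{H}^0 i_Z^! \IC_X({}^p\mathcal{L}) = 0$, i.e., $i_Z^! \IC_X({}^p\mathcal{L}) \in D^{\geq 1}\MHM(Z)$. For $S \neq U$, disjointness of strata yields $S \subset Z$, so $i_S = i_Z \circ i_{S,Z}$ for a locally closed embedding $i_{S,Z}\colon S \hookrightarrow Z$. Left t-exactness of $i_{S,Z}^!$ from part (1) then puts $i_S^! \IC_X({}^p\mathcal{L}) = i_{S,Z}^! \bigl(i_Z^! \IC_X({}^p\mathcal{L})\bigr)$ into $D^{\geq 1}\MHM(S)$, so $\mathcal{H}^0 i_S^! \IC_X({}^p\mathcal{L}) = 0$. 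The main obstacle is essentially bookkeeping: one needs to verify that the perverse-sheaf characterization of the intermediate extension and the weight/t-exactness properties of $!$-pullback lift unambiguously from constructible complexes to MHM, which is part of Saito's foundational setup and needs to be cited rather than reproved.
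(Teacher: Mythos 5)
Your proposal is correct and takes essentially the same route as the paper: the paper handles (1) by citing the recollement left-exactness of $i_S^!$ (Kashiwara--Schapira, Prop.~10.2.11) together with Saito's weight estimate (4.5.2) for $f^!$, handles (2) by observing $i_U^!=i_U^*$ is exact restriction, and handles (3) via the image description of $\IC_X({}^p\mcl)$ combined with the BBD characterization of intermediate extensions (1.4.22, 1.4.24) — precisely the standard facts you unpack, including the reduction of (3) to $i_Z^!\IC_X({}^p\mcl)$ lying in degrees $\geq 1$ and left exactness for strata inside $Z$.
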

\begin{proof}
The first statement follows from \cite[Proposition 10.2.11]{KS} and
\cite[(4.5.2)]{SaitoMHM}. The second statement follows from the fact
that $i^!_U = i^*_U$ is just the restriction to the open subset $U$
which is exact. The last point follows from the characterization of
$\IC_X({^p}\mcl)$ as $\image (\mch^0 j_! {^p}\mcl \ra \mch^0 j_* {^p}\mcl)$
and \cite[1.4.22 and 1.4.24]{BBD}.
\end{proof}

\section{Weight filtration on torus embeddings}\label{sec:weightontorus}

\subsection{Basic Notions}\label{subsec-basics}

\begin{ntn}\label{ntn-A}
  If $C$ is a semiring (an additive semigroup closed under
  multiplication) write $CA$ for the $C$-linear combinations of
  the columns of the integer $d\times n$ matrix $A$.
  We assume that $A$ satisfies:
  \begin{enumerate}
  \item $\mbz A = \mbz^d$;\label{A-righttorus}
  \item $A$ is \emph{saturated}: $(\mbr_{\geq 0}A) \cap \mbz^d = \mbn
    A$;\label{A-normal}
  \item $A$ is \emph{pointed}: $\NN A\cap(-\NN A) = \{0_{\ZZ
    A}\}$.\label{A-pointed} 
  \end{enumerate}
  We let $\sigma := \mbr_{\geq 0} A$ be the real cone over $A$ inside
  $\RR^d$ and consider the affine toric variety
  \[
  X:=X_\sigma:= \Spec(\mbc[\mbn A])\subseteq \hatV
  \]
  together with its open dense torus
  \[
  T:= T_\sigma := \Spec(\mbc[\mbz A]).
  \]
  Properties \eqref{A-righttorus}-\eqref{A-pointed} of $A$ above
  imply that $X$ is $d$-dimensional,  
  normal by Hochster's theorem \cite{Hoch}, and has one $T$-fixed point.



Let $ \tau \subseteq \sigma$ be a $d_\tau$-dimensional face of $\sigma$.
We denote by
\[
\tau_\mbz := (\tau+(-\tau)) \cap \mbz^d\;, \qquad \tau_\mbn := \tau
\cap \mbz^d \qquad \text{and} \qquad \tau_\mbr := \tau + (-\tau) =
\textup{span}(\tau)
\]
the $\mbz$-, $\mbn$- and $\mbr$-spans of the collection of $A$-columns
in $\tau$ (considering that $\NN A$ is saturated).
We associate to $\tau$ a $d_\tau$-dimensional torus orbit
\[
T_\tau := \Spec(\mbc[\tau_\mbz])
\]
whose closure in $X=X_\sigma$ via the embedding $i_{\tau,\sigma}\colon
T_\tau\into X_\sigma$ induced by $\CC[\NN A]=\CC[\sigma_\NN]\onto
\CC[\tau_\NN]\into \CC[\tau_\ZZ]$ is
\[
X_\tau :=\Spec(\mbc[\tau_\mbn]){\xleftarrow{ \,\,\phi_\tau\, }\joinrel\rhook} \Spec(\mbc[\tau_\ZZ])=T_\tau.
\]
Saturatedness of $\NN A$  implies that $X_\tau$ is normal.  The variety
\[
U_\tau :=\Spec(\mbc[\sigma_\mbn + \tau_\mbz])
\]
is an open neighborhood of $T_\tau$ in $X$.  The affine toric variety 
\[
X_{\sigma / \tau} := \Spec(\mbc[(\sigma_\mbn + \tau_\mbz) / \tau_\mbz])
\]
with its dense torus 
\[
T_{\sigma/\tau} := \Spec(\mbc[\sigma_\mbz / \tau_\mbz])
\]
is a normal slice to the stratum $T_\tau$: there is a (non-canonical) isomorphism
$U_\tau \simeq X_{\sigma / \tau} \times T_\tau$ and
an inclusion
\[
j_\tau \colon  X_{\sigma / \tau} \times T_\tau \simeq U_\tau\into  X.
\]
The inclusions $T_{\sigma/\tau}\into X_{\sigma/\tau}\into X$
correspond to the (canonical) morphisms $\sigma_\NN\onto
(\sigma_\NN+\tau_\ZZ)/\tau_\ZZ\to \sigma_\ZZ/\tau_\ZZ$.
For any pointed rational polyhedral cone $\rho$ in (a quotient of) $\RR^d$ we
denote by
\[
i_\rho\colon \{\frakx_\rho\}\into X_\rho
\]
the embedding of
the unique torus-fixed point. Then we have the
following commutative diagram
\[
\xymatrix{\{\frakx_{\sigma / \tau}\} \times T_\tau
  \ar[rr]^{i_{\sigma/\tau} \times \id}  \ar[drr]_{i_{\tau,\sigma}} &&
  X_{\sigma / \tau} \times T_\tau \ar[d]_{j_{\tau}} && T_{\sigma /
    \tau} \times T_\tau \simeq T \ar[ll]_{\TXmap_{\sigma / \tau}
    \times \id}  \ar[dll]^\TXmap \\ && X  &&} 
\]
of equivariant maps.
\schluss

\end{ntn}

\medskip


\begin{defn}
  Let $\mu: \GG_m \times Y \ra Y$ be a $\GG_m$-action on the variety $Y$.
  Write $\pr\colon \GG_m\times Y\to Y$ for the projection.
  A holonomic $\calD_{Y}$-module $\calM$ is called
  \emph{$\GG_m$-equivariant} if $\mu^+ \calM
  \simeq \pr^+ \calM$ as $\mcd_{\GG_m\times Y}$-module.
\schluss\end{defn}

If $\boldv\in\ZZ A$ is in the interior $\Int(\sigma^\vee)$ of the dual
cone $\sigma^\vee$, then $\boldv$ defines a $1$-parameter subgroup
\[
\kappa_\boldv\colon \mbg_m = \Spec \mbc[z^\pm] \ra T = \Spec
\mbc[\mbz A]
\]
given by $t^\boldu \mapsto z^{\langle \boldu,\boldv
  \rangle}$.
It extends to a map $\overline{\kappa}_\boldv\colon \mba^1 \ra X_\sigma$
with limit point (recall that $A$ is pointed) equal to the $T$-invariant point 
$\frakx_\sigma \in X_\sigma$. By adjusting the ambient lattice,
similar statements hold for all faces $\tau$.

%
On the level of underlying $D$-modules, the following is
\cite[Prop.~10.4]{Ginsburg-Inv86}.
\begin{lem}\label{lem:equivpoint}
Let $i_\tau: \{\frakx_{\tau}\} \ra X_\tau$ be the inclusion of
the torus fixed point and for any space $X$ denote
\[
a_X:X\to pt
\]
the
projection to a point. If $X=X_\tau$ is one of our orbit closures,
identify $a_{X_\tau}$ with $a_\tau: X_\tau \ra \{\frakx_\tau\}$.

Let $\boldv\in\tau^\vee $ be an integer
element in the relative interior of the dual cone and consider the induced action
of $\mbg_m$ on $X_\tau$. For every $\mbg_m$-equivariant Hodge
module $\mcm$ on
$X_\tau$ we have the following isomorphisms
\begin{align}
{a_\tau}_* \mcm \simeq i_\tau^* \mcm \qquad \text{and} \qquad
{a_\tau}_! \mcm \simeq i_\tau^! \mcm. \notag
\end{align}
\end{lem}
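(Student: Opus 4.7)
I plan to prove the first isomorphism $a_{\tau*}\mcm\simeq i_\tau^*\mcm$; the second is its Verdier dual, obtained by applying $\DD$ and using $\DD a_{\tau*}=a_{\tau!}\DD$, $\DD i_\tau^*=i_\tau^!\DD$, together with the fact that $\DD\mcm$ remains $\GG_m$-equivariant.

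The strategy is the classical contraction principle. Because $\boldv\in\Int(\tau^\vee)$, the one-parameter subgroup $\kappa_\boldv$ extends to $\bar\kappa_\boldv\colon\AA^1\to X_\tau$ with $\bar\kappa_\boldv(0)=\frakx_\tau$, and the $T_\tau$-action then extends to $\bar\mu\colon\AA^1\times X_\tau\to X_\tau$ with $\bar\mu(0,x)=\frakx_\tau$ for every $x$. This furnishes an $\AA^1$-homotopy between $\id_{X_\tau}=\bar\mu\circ s_1$ and $i_\tau\circ a_\tau=\bar\mu\circ s_0$, where $s_t\colon X_\tau\hookrightarrow\AA^1\times X_\tau$ denotes the section $x\mapsto(t,x)$.

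Introducing the projection $q\colon\AA^1\times X_\tau\to\AA^1$ and the other projection $p\colon\AA^1\times X_\tau\to X_\tau$, I would form the interpolating object
$$
\mcn:=q_*\bar\mu^*\mcm\in\rmD^b\MHM(\AA^1).
$$
Base change along the Cartesian squares with the smooth morphism $q$, applied to $s_1$ and $s_0$, then identifies
$$
s_1^*\mcn\simeq a_{\tau*}\mcm\qquad\text{and}\qquad s_0^*\mcn\simeq a_{\tau*}\bigl(a_\tau^*i_\tau^*\mcm\bigr).
$$
The contraction $\bar\kappa_\boldv$ makes $X_\tau$ deformation retract to $\frakx_\tau$, so $X_\tau$ is $\QQ$-acyclic and $a_{\tau*}\QQ^H_{X_\tau}\simeq\QQ^H_{\{\frakx_\tau\}}$; the projection formula then yields $a_{\tau*}a_\tau^*\simeq\id_{\MHM(\{\frakx_\tau\})}$, and in particular $s_0^*\mcn\simeq i_\tau^*\mcm$. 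On the other hand, restricting to $\GG_m\times X_\tau$, the equivariance isomorphism $\bar\mu^*\mcm\simeq p^*\mcm$ pushes forward through $q$ to give the trivialization
$$
\mcn|_{\GG_m}\simeq a_{\tau*}\mcm\boxtimes\QQ^H_{\GG_m}.
$$

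It remains to conclude $s_0^*\mcn\simeq s_1^*\mcn$. For this one uses that $\mcn$ is $\GG_m$-equivariant on $\AA^1$ for the standard action: equipping $\AA^1\times X_\tau$ with the action $s\cdot(t,x):=(st,\mu(s,x))$ makes $q$ equivariant, and the identity $\bar\mu(st,x)=\mu(s,\bar\mu(t,x))$ combined with the equivariance of $\mcm$ makes $\bar\mu^*\mcm$ equivariant; pushing forward by $q$ produces a $\GG_m$-equivariant MHM on $\AA^1$. Together with the trivialization on $\GG_m$ and the fact that $\mcn$ arises as the pushforward of an object defined on all of $\AA^1\times X_\tau$, this rigidity forces $\mcn\simeq a_{\tau*}\mcm\boxtimes\QQ^H_{\AA^1}$, whence $s_0^*\mcn\simeq s_1^*\mcn$. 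Chaining the isomorphisms then yields $a_{\tau*}\mcm\simeq s_1^*\mcn\simeq s_0^*\mcn\simeq i_\tau^*\mcm$.

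\emph{Main obstacle.} The rigidity assertion is the most delicate step: a $\GG_m$-equivariant MHM on $\AA^1$ with trivial restriction to $\GG_m$ is not automatically the trivial extension (a $j_!$-extension, for instance, is also equivariant). The resolution uses that $\mcn$ is not an abstract equivariant object but comes from pushforward of $\bar\mu^*\mcm$ on all of $\AA^1\times X_\tau$; the extendability of $\bar\mu$ across $\{0\}$, which is precisely what requires $\boldv\in\Int(\tau^\vee)$, supplies the additional structure that pins down the constant extension.
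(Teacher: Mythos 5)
There is a genuine gap, and it sits exactly where the whole difficulty of the lemma lives. Your argument needs two identifications of the special fibre of $\mcn=q_*\bar\mu^*\mcm$ at $0\in\AA^1$: first, $s_0^*\mcn\simeq a_{\tau*}\bigl(a_\tau^*i_\tau^*\mcm\bigr)$, and second, $s_0^*\mcn\simeq s_1^*\mcn$. Neither is established. The first is a base-change statement for the \emph{non-proper} pushforward $q_*$ along the closed point $s_0\colon pt\to\AA^1$: proper base change does not apply ($q$ is not proper since $X_\tau$ is affine of positive dimension), and smooth base change is irrelevant because the morphism you pull back along is $s_0$, not $q$; the smoothness of $q$ buys you nothing here. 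In general $i^*f_*\neq f'_*i'^*$ for non-proper $f$, and whether it holds for this particular $\mcn$ at $0$ is essentially the assertion of the lemma itself. The second identification is your ``rigidity'' step, and you concede yourself that $\GG_m$-equivariance on $\AA^1$ plus constancy on $\GG_m$ does not pin down the extension across $0$ (the $j_!$- and $j_*$-extensions of $a_{\tau*}\mcm\boxtimes\QQ^H_{\GG_m}$ are equally equivariant). The appeal to ``$\mcn$ comes from a pushforward over all of $\AA^1\times X_\tau$, and $\bar\mu$ extends across $0$ because $\boldv\in\Int(\tau^\vee)$'' names the hypothesis that must be used, but supplies no mechanism; as written you have translated the contraction lemma into two other unproved statements of the same depth.

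The paper's proof shows how to exploit the extendability of the action without ever invoking an illegitimate base change. One reduces to $\tau=\sigma$, takes the open-closed triangle $u_!u^{-1}\mcm\to\mcm\to i_*i^*\mcm\overset{+1}{\to}$ for the complement $u$ of the fixed point, and must only show $a_*u_!u^{-1}\mcm=0$. For this, the section $g(x)=(1,x)$ of the extended action $\overline{\kappa}_\boldv\colon\AA^1\times X_\sigma\to X_\sigma$ makes the composite $a_*\to a_*\overline{\kappa}_\boldv^*\to a_*g^*\overline{\kappa}_\boldv^*$ the identity, so it suffices to kill $a_*\overline{\kappa}_\boldv^*u_!u^{-1}\mcm$; now one uses only base change for $(-)_!$ against an arbitrary pullback (which is always valid, unlike the base change you need), equivariance to replace $(\overline{\kappa}'_\boldv)^*u^{-1}\mcm$ by $\QQ^H_{\GG_m}\boxtimes u^{-1}\mcm$, and the K\"unneth formula together with $H^\bullet(\AA^1,u_{1!}\QQ^H_{\GG_m})=0$. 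If you want to salvage your homotopy-interpolation formulation, you would have to prove the constancy of $\mcn$ across $0$ by some such device (or by hyperbolic-localization-type arguments); as it stands the proposal does not prove the lemma.
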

\begin{proof}
  It suffices to consider the case $\tau=\sigma$. 
Denote by $u: X_\sigma \smallsetminus \{\frakx_\sigma\} \ra X_\sigma$ the
open embedding of the complement of the fixed point $\frakx_\sigma$,
abbreviate $i_\sigma$ to $i$ and denote by $a$ the map to a point. We
have the exact triangle
\[
u_! u^{-1} \mcm \lra \mcm \lra i_* i^* \mcm \overset{+1}{\lra}
\]
Applying $a_*$ we get
\[
a_* u_! u^{-1} \mcm \lra a_*\mcm \lra  i^* \mcm \overset{+1}{\lra}
\]
and we will show that $a_*u_!u^{-1}\mcm=0$. 
As $\boldv \in \Int(\sigma^\vee)$ , we have an action 
 $\overline{\kappa}_\boldv\colon  \mba^1 \times X_\sigma \ra X_\sigma$ with
$\overline{\kappa}_\boldv^{-1}(\frakx_\sigma) = (\mba^1 \times \{\frakx_\sigma\} ) \cup (\{0\}
\times X_\sigma)$.  This gives the following Cartesian diagram
\[
\xymatrix{\mbg_m \times (X_{\sigma} \smallsetminus \{\frakx_\sigma\})
  \ar[rr]^{u'} \ar[d]_{\overline{\kappa}_\boldv'} && \mba^1 \times X_\sigma \ar[d]_{\overline\kappa_\boldv}
  \\ X_{\sigma} \smallsetminus \{\frakx_\sigma\} \ar[rr]^u && X_\sigma }
\]
where $\overline{\kappa}_\boldv'$ is the restriction and $u'$ is the
canonical inclusion. Consider
the morphism $g: X_\sigma \ra \mba^1 \times X_\sigma$ with $g(x) =
(1,x)$. The morphism $g$ is a section of $\overline{\kappa}_\boldv$, hence
$\overline{\kappa}_\boldv\circ 
g=\id_{X_\sigma}$. Therefore the composition
\[
a_* \ra a_* {\overline{\kappa}_\boldv}_* \overline{\kappa}_\boldv^*
= (a_{\AA^1\times X_\sigma})_* \overline{\kappa}_\boldv^* 
\ra (a_{\AA^1\times X_\sigma})_* g_* g^*\overline{\kappa}_\boldv^* = a_* g^*\overline{\kappa}_\boldv^*
\]
is the identity transformation. In order to show that $a_* u_!
u^{-1} \mcm = 0$ it is hence enough to prove that the intermediate
module $(a_{\AA^1\times X_\sigma})_* \overline{\kappa}_\boldv^*u_!
u^{-1} \mcm$ vanishes. 

By base change we get the following isomorphism:
\[
(a_{\AA^1\times X_\sigma})_* \overline{\kappa}_\boldv^*u_! u^{-1} \mcm \simeq (a_{\AA^1\times X_\sigma})_*u'_!(\overline{\kappa}_\boldv')^* u^{-1} \mcm.
\]
Since $u^{-1} \mcm$ is $\mbg_m$-equivariant, we have
\[
(\overline{\kappa}_\boldv')^*u^{-1}\mcm \simeq pr^* u^{-1}\mcm \simeq \mbq^H_{\mbg_m}
\boxtimes u^{-1}\mcm. 
\]
Therefore we get
\[
(a_{\AA^1\times X_\sigma})_*u'_!(\overline{\kappa}_\boldv')^* u^{-1} \mcm \simeq
(a_{\AA^1\times X_\sigma})_* u'_!(\mbq^H_{\mbg_m} \boxtimes u^{-1}\mcm)
\simeq (a_{\AA^1\times X_\sigma})_* (u_{1!}  \mbq^H_{\mbg_m} \boxtimes
u_{!}u^{-1}\mcm),
\]
where $u_1 :\mbg_m\ra \mba^1$ is the canonical inclusion. Since
$\HH^\bullet(\mba^1, u_{1!} \mbq^H_{\mbg_m}) = 0$,
    the K\"{u}nneth formula shows that $(a_{\AA^1\times X_\sigma})_*\overline{\kappa}_\boldv^*u_!u^{-1} \mcm = 0$. This
shows the first claim. The second claim follows by dualizing; note
that duals of equivariant modules are equivariant.
\end{proof}

Recall that $\IH(-)$ (and $\IH_c$) denotes intersection cohomology (with
compact support).
\begin{lem}\label{lem:propIHtoric}
Let $\gamma$ be a face of $\sigma$, $X_\gamma$ the associated
$d_\gamma$-dimensional affine toric variety. The following holds
\begin{enumerate}
\item $\IH^{d_\gamma+ k}_c(X_{\gamma}) \simeq \left(\IH^{d_\gamma
  -k}(X_{\gamma})(d_\gamma)\right)^\dual$.
\item $\IH^{d_\gamma+k}(X_\gamma) =\IH_c^{d_\gamma-k}(X_\gamma) = 0$ for $k \geq 0$.
\item $\IH^k(X_\gamma) = 0$ for $k$ odd.
\item $\IH^{2k}(X_\gamma)$ and $\IH^{2k}_c(X_\gamma)$ are pure Hodge
  structures of Hodge--Tate type with weight $2k$, \emph{i.e.}
\[  
 gr^W_{i}\IH^{2k}(X_\gamma) = 0 \quad \text{and}\quad gr^F_{j} \IH^{2k}(X_\gamma) = 0  
 \text{ for $i \neq 2k$ and $j \neq -k$.}
 \]
\end{enumerate}
\end{lem}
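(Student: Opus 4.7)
The plan is to use the contracting $\GG_m$-action on $X_\gamma$ supplied by any lattice element in the relative interior of $\gamma^\vee$ to reduce every assertion to a computation of the stalk or costalk of the intersection complex at the unique torus-fixed point $\frakx_\gamma$, and then to invoke Verdier self-duality of $\IC_{X_\gamma}$, the perverse support axioms, and the known Hodge--Tate structure on the local intersection cohomology of toric varieties.

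First I would apply Lemma \ref{lem:equivpoint} to the pure Hodge module $\IC_{X_\gamma}$, which is torus equivariant and in particular $\GG_m$-equivariant for the chosen one-parameter subgroup. This provides canonical isomorphisms
\[
(a_\gamma)_* \IC_{X_\gamma} \simeq i_\gamma^* \IC_{X_\gamma}, \qquad (a_\gamma)_! \IC_{X_\gamma} \simeq i_\gamma^! \IC_{X_\gamma}
\]
in $\rmD^b \MHM(\mathrm{pt})$, so that $\IH^\bullet(X_\gamma)$ and $\IH^\bullet_c(X_\gamma)$ are identified, up to the conventional shift by $d_\gamma$, with the cohomologies of the stalk and costalk of $\IC_{X_\gamma}$ at $\frakx_\gamma$, each carrying its natural mixed Hodge structure.

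For (1) I would combine the intertwining $\mbd \circ (a_\gamma)_* \simeq (a_\gamma)_! \circ \mbd$ with the Verdier self-duality $\mbd\, \IC_{X_\gamma} \simeq \IC_{X_\gamma}(d_\gamma)$, which is available because $\IC_{X_\gamma}$ is pure of weight $d_\gamma$. Part (2) will then follow from the perverse support and cosupport conditions satisfied by $\IC_{X_\gamma}$: at the deepest stratum $\{\frakx_\gamma\}$, which has complex dimension $0$ and, for $d_\gamma \geq 1$, is strictly smaller than the open stratum, middle perversity forces $\mch^k(i_\gamma^* \IC_{X_\gamma}) = 0$ for $k \geq 0$ and dually $\mch^k(i_\gamma^! \IC_{X_\gamma}) = 0$ for $k \leq 0$. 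Pulled through Step 1 this yields the asserted vanishing.

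Parts (3) and (4) are the genuine content. I would appeal to the classical computation of the stalk of the intersection complex of an affine toric variety at its torus-fixed point, due to Denef--Loeser and Fieseler and rooted in Stanley's work on $h$-polynomials: this stalk is pure of Hodge--Tate type, concentrated in cohomological degrees of a definite parity and carrying Tate Hodge structures of the expected weights. Pushed through the identifications of Step 1, this yields (3) and (4) for $\IH^\bullet(X_\gamma)$, and (1) then transfers Hodge--Tate purity to $\IH^\bullet_c(X_\gamma)$. The main obstacle will be the bookkeeping of weights and Tate twists in (4); by contrast, (1) and (2) are essentially formal consequences of Lemma \ref{lem:equivpoint} together with the support axioms.
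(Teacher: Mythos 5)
Your proposal is correct and follows essentially the same route as the paper: part (1) via Verdier duality and the exchange of $a_*$, $a_!$ under $\mbd$, reduction of $\IH^\bullet(X_\gamma)$ and $\IH^\bullet_c(X_\gamma)$ to the stalk and costalk of $\IC_{X_\gamma}$ at $\frakx_\gamma$ by the contraction Lemma \ref{lem:equivpoint}, and then an appeal to the toric intersection-cohomology literature for the remaining claims. The only divergences are minor: you deduce (2) from the support/cosupport axioms for $\IC_{X_\gamma}$ (valid, and slightly more self-contained than the paper's citation of Fieseler, modulo the $d_\gamma=0$ edge case you already flag), while for the Hodge--Tate purity in (4) the reference you actually need is Weber's purity theorem, as in the paper --- the Denef--Loeser and Fieseler results you invoke only give the Betti-number and parity statements, not the weight purity, which is not automatic since $X_\gamma$ is non-compact.
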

\begin{proof} Temporarily, write $a$ for $a_\gamma$ and $i$ for $i_\gamma$.
Claim (1) follows from Verdier duality:
\begin{eqnarray*}
\IH^{d_\gamma+ k}_c(X_{\gamma})
&\simeq&  \HH^k a_!\IC_{X_{\gamma}}( {^p}\mbq^H_{T_{\gamma}}) \\
&\simeq& \HH^k \mbd a_* \mbd\IC_{X_{\gamma}}( {^p}\mbq^H_{T_{\gamma}})\\
&\simeq& \HH^k \mbd a_*(\IC_{X_{\gamma}}( {^p}\mbq^H_{T_{\gamma}})(d_\gamma)) \\
&\simeq& \left( \HH^{-k}a_*(\IC_{X_{\gamma}}( {^p}\mbq^H_{T_{\gamma}})(d_\gamma)) \right)^\dual \\
&\simeq& \left(\IH^{d -k}(X_{\gamma})(d_\gamma)\right)^\dual.
\end{eqnarray*}

From Lemma \ref{lem:equivpoint} we have the isomorphisms
\[
\IH^k(X_\gamma) = \HH^k a_*\IC_{X_\gamma}(\mbq^H_{T_\gamma}) \simeq \HH^k
  i^*\IC_{X_\gamma}(\mbq^H_{T_\gamma}) =
  \HH^k\IC_{X_\gamma}(\mbq^H_{T_\gamma})_{\frakx_\gamma}.
\]
  Claim (2) follows from \cite[Theorem
  1.2]{Fies}, which also implies---in conjunction with Remark
  ii.) in
  loc. cit.---Claim (3).
Claim (4) follows from
\cite[Corollary 4.12]{Weber}.
\end{proof}

Let now $\tau, \gamma$ be faces of $\sigma$ with $\tau \subseteq \gamma$
and set
\[
X_{\gamma / \tau} := \Spec(\mbc[(\gamma_\mbn  + \tau_\mbz)  /\tau_\mbz ]).
\]

The following result discusses (derived) pullbacks of constant variations of
Hodge structures to torus orbits.

\begin{lem}\label{lem:weightonorbitIC}
Let $i_{\tau,\gamma}: T_\tau \to X_\tau\to  X_\gamma$ be the torus orbit
embedding and let $H$ be a polarizable Hodge structure of weight
$w$ (on a point). Then ${^p}\mcl:= H\otimes {^p}\mbq^H_{T_\gamma}$ is a
(constant) variation of polarizable Hodge structures of weight
$w+d_\gamma$ on $T_\gamma$. We have the following isomorphisms in
$\MHM(T_\tau)$:
\begin{align}
  \mch^k(i_{\tau,\gamma}^!\,\IC_{X_{\gamma}}({^p}\mcl))&\simeq H
  \otimes\IH_c^{d_\gamma - d_\tau +k}(X_{\gamma / \tau}) \otimes
            {^p}\mbq^H_{T_\tau}, \\
 \label{eqn-weightonorbit-2}
 \mch^k(i_{\tau,\gamma}^*\,\IC_{X_{\gamma}}({^p}\mcl))&\simeq
 H \otimes \IH^{d_\gamma - d_\tau +k}(X_{\gamma / \tau})
 \otimes {^p}\mbq^H_{T_\tau}.
\end{align}
The weight filtration satisfies:
\[
\gr^W_j \mch^k(i_{\tau,\gamma}^!\IC_{X_\gamma}({^p}\mcl)) =
 \gr^W_j \mch^k(i_{\tau,\gamma}^*\IC_{X_\gamma}({^p}\mcl)) = 0
\]
for $j \neq w+d_\gamma+k$.
\end{lem}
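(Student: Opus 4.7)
The plan is to reduce the pullback computation to the unique torus-fixed point of the normal slice $X_{\gamma/\tau}$, where the preceding Lemma \ref{lem:equivpoint} identifies $i^!$ with $a_!$ and $i^*$ with $a_*$. Since the claims involve only faces $\tau \subseteq \gamma$, I would replace the ambient $X_\sigma$ by $X_\gamma$ throughout; all notation from Notation \ref{ntn-A} applies verbatim with $\sigma$ replaced by $\gamma$.

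First I would factor $i_{\tau,\gamma}$ through the normal slice isomorphism $j_\tau\colon X_{\gamma/\tau}\times T_\tau \xrightarrow{\sim} U_\tau \hookrightarrow X_\gamma$, writing
\[
T_\tau \xrightarrow{\iota} X_{\gamma/\tau}\times T_\tau \xrightarrow{j_\tau} U_\tau \hookrightarrow X_\gamma,
\]
where $\iota = i_{\gamma/\tau}\times \id_{T_\tau}$. Since $U_\tau\hookrightarrow X_\gamma$ is an open embedding, its $!$- and $*$-pullbacks coincide and preserve intersection complexes; under the splitting $T_\gamma \simeq T_{\gamma/\tau}\times T_\tau$ we have ${}^p\mcl|_{T_\gamma} \simeq H\otimes ({}^p\mbq^H_{T_{\gamma/\tau}}\boxtimes {}^p\mbq^H_{T_\tau})$. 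The multiplicativity of $\IC$ under products therefore gives
\[
j_\tau^* \,\IC_{X_\gamma}({}^p\mcl) \;\simeq\; H\otimes \bigl(\IC_{X_{\gamma/\tau}}\boxtimes {}^p\mbq^H_{T_\tau}\bigr).
\]

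Next I would apply $\iota^!$ and $\iota^*$ via the Künneth formula for the product $X_{\gamma/\tau}\times T_\tau$. Since $\id^! = \id^* = \id$ on the $T_\tau$-factor, this reduces the computation to that of $i_{\gamma/\tau}^!\IC_{X_{\gamma/\tau}}$ and $i_{\gamma/\tau}^*\IC_{X_{\gamma/\tau}}$. Choosing an integer $\boldv$ in the relative interior of $(\gamma/\tau)^\vee$ equips $X_{\gamma/\tau}$ with a contracting $\mbg_m$-action, and $\IC_{X_{\gamma/\tau}}$ is equivariant by canonicity. Lemma \ref{lem:equivpoint} then converts $i_{\gamma/\tau}^!$ (resp.\ $i_{\gamma/\tau}^*$) into $a_{\gamma/\tau,!}$ (resp.\ $a_{\gamma/\tau,*}$). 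Taking $k$-th cohomology and using the convention $\IH^{d_{\gamma/\tau}+k}_{(c)}(X_{\gamma/\tau}) = H^k a_{(*/!)}\IC_{X_{\gamma/\tau}}$ from the preceding lemma, and noting $d_{\gamma/\tau}=d_\gamma-d_\tau$, yields the two displayed isomorphisms.

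For purity I would invoke parts (3)--(4) of the preceding lemma: when non-zero, $\IH^m_{(c)}(X_{\gamma/\tau})$ is pure of Hodge-Tate type and weight $m$. Tensoring with $H$ (weight $w$) and ${}^p\mbq^H_{T_\tau}$ (weight $d_\tau$, as the shifted structure sheaf on a smooth $d_\tau$-dimensional variety) produces a pure Hodge module of weight
\[
w + (d_\gamma-d_\tau+k) + d_\tau \;=\; w+d_\gamma+k,
\]
which is exactly the asserted vanishing of $\gr^W_j$ for $j\neq w+d_\gamma+k$.

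The main obstacle I expect is the first step: carefully certifying that the local product structure supplied by $j_\tau$ transports $\IC_{X_\gamma}({}^p\mcl)$ to the external tensor product $H\otimes \IC_{X_{\gamma/\tau}}\boxtimes {}^p\mbq^H_{T_\tau}$ \emph{as mixed Hodge modules}, with matching weight and shift conventions, and that the resulting $\IC_{X_{\gamma/\tau}}$ is indeed $\mbg_m$-equivariant for the chosen one-parameter subgroup so that Lemma \ref{lem:equivpoint} applies. The remaining manipulations are formal consequences of Künneth and the previous lemma.
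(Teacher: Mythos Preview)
Your proposal is correct and follows essentially the same route as the paper: factor through the open neighborhood $U_\tau \simeq X_{\gamma/\tau}\times T_\tau$, split $\IC_{X_\gamma}({}^p\mcl)$ as an external product, reduce via K\"unneth to the pullback $i_{\gamma/\tau}^{!}$ (resp.\ $i_{\gamma/\tau}^{*}$) at the fixed point of the slice, and then invoke Lemma~\ref{lem:equivpoint} to rewrite this as compactly supported (resp.\ ordinary) intersection cohomology. The paper carries out exactly this computation, being somewhat more explicit than you are about the Tate twists and shifts (writing ${}^p\mcl \simeq p_3^!({}^p\tilde{\mcl}(-d_\tau)[-d_\tau])$ and chasing $(i_{\gamma/\tau}\times\id)^!$ through the projection diagram), which is precisely the bookkeeping you flag as the ``main obstacle''; your weight calculation at the end matches theirs.
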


\begin{proof}
  Consider the following diagram:
 \[
\xymatrix{\{\frakx_{\gamma / \tau}\} \ar[rr]^{i_{\gamma/\tau}}  && X_{\gamma / \tau} &&
  T_{\gamma / \tau} \ar[ll]_{\TXmap_{\gamma / \tau}} \\
  \{\frakx_{\gamma / \tau}\} \times T_\tau \ar[rr]^{i_{\gamma/\tau} \times \id}
  \ar[u]^{p_1} \ar[drr]_{i_{\tau,\gamma}} && X_{\gamma / \tau} \times T_\tau
  \ar[u]^{p_2} \ar[d]_{j_{\tau}^\gamma} && T_{\gamma / \tau} \times
  T_\tau\simeq T_\gamma
  \ar[ll]_{\TXmap_{\gamma / \tau} \times \id} \ar[u]^{p_3} \ar[dll]^{\TXmap_\gamma} \\
  && X_\gamma &&}. \notag
\]
Since ${^p}\mcl$ is a constant variation of Hodge structures on
$T_\gamma \simeq T_{\gamma / \tau} \times T_\tau$, we have
(cf. \cite[(4.4.2)]{SaitoMHM})
%
\begin{align}
{^p}\mcl &= H\otimes \mbq^H_{T_\gamma}[d_\gamma] \simeq (H \otimes
\mbq^H_{T_{\gamma / \tau}}(-d_\tau)[-2 d_\tau +d_\gamma]) \boxtimes
\mbq^H_{T_\tau}(d_\tau)[2 d_\tau] \notag \\ 
&\simeq p_3^!(H \otimes {^p}\mbq^H_{T_{\gamma / \tau}}(-d_\tau)[-d_\tau])
\simeq p_3^!\; ({^p}\tilde{\mcl}(-d_\tau)[-d_\tau]) \notag 
\end{align}
where we have set ${^p}\tilde{\mcl} = H \otimes {^p}\mbq^H_{T_{\gamma/ \tau }}$.
We have the following isomorphisms
\begin{align}
i^!_{\tau,\gamma}\IC_{X_{\gamma}}({^p}\mcl) &\simeq (i_{\gamma/\tau} \times \id)^!
(j_\tau^\gamma)^!\IC_{X_{\gamma}}({^p}\mcl)\notag \\ 
 &\simeq (i_{\gamma/\tau} \times \id)^!\IC_{X_{\gamma / \tau} \times
  T_\tau}({^p}\mcl) \notag \\ 
&\simeq (i_{\gamma/\tau} \times \id)^!  p_2^!\IC_{X_{\gamma
    /\tau}}({^p}\tilde{\mcl})(-d_\tau)[-d_\tau] \notag \\ 
&\simeq p_1^!\, i_{\gamma/\tau}^!\,\IC_{X_{\gamma
    /\tau}}({^p}\tilde{\mcl})(-d_\tau)[-d_\tau] \notag \\ 
&\simeq i_{\gamma/\tau}^!\,\IC_{X_{\gamma / \tau}}({^p}\tilde{\mcl})\boxtimes
\mbd\mbq^H_{T_\tau}(-d_\tau)[-d_\tau] \notag \\ 
&\simeq i_{\gamma/\tau}^!\,\IC_{X_{\gamma/ \tau}}({^p}\tilde{\mcl})\boxtimes
\mbq^H_{T_\tau}[d_\tau] \notag \\ 
&\simeq i_{\gamma/\tau}^!\,\IC_{X_{\gamma /\tau}}({^p}\tilde{\mcl})\boxtimes
    {^p}\mbq^H_{T_\tau}. \notag 
\end{align}

Since $\IC_{X_{\gamma / \tau}}({^p}\tilde{\mcl})$ is $T_{\gamma /
  \tau}$-equivariant it follows from Lemma \ref{lem:equivpoint} that
\begin{align*}
\HH^k(i_{\gamma/\tau}^!\,\IC_{X_{\gamma/ \tau}}({^p}\tilde{\mcl})) &\simeq \HH^k(a_!\,\IC_{X_{\gamma / \tau}}({^p}\tilde{\mcl})) \\
&\simeq \HH^k(a_!\,\IC_{X_{\gamma / \tau}}(H \otimes {^p}\mbq^H_{T_{\gamma / \tau}})) \\
&\simeq \IH^{d_\gamma - d_\tau +k}_c(X_{\gamma / \tau}) \otimes H  \notag 
\end{align*}
as mixed Hodge structures. This gives the isomorphism
\[
\mch^k(i^!_{\tau,\gamma}\IC_{X_\gamma}({}^p\mcl))\simeq H
\otimes
\IH_c^{d_\gamma - d_\tau+k}(X_{\gamma /\tau})
\otimes {^p}\mbq^H_{T_\tau}\, .
\]

The weight filtration $\{W_k\IH^i(X_{\gamma/\tau})\}_k$ on the
intersection cohomology of $X_{\gamma / \tau}$ satisfies $\gr^W_k
\IH^i(X_{\gamma /\tau}) = 0$ if $i \neq k$.
Hence we get
\[
\gr^W_i \mch^k(i^!_{\tau,\gamma}\IC_{X_\gamma}(\mcl)) =
\bigoplus_{i=l_1+l_2+l_3}\gr^W_{l_1} H \otimes_\mbc
\gr^W_{l_2}\IH_c^{d_\gamma - d_\tau + k}(X_{\gamma / \tau})
\otimes_\mbc \gr^W_{l_3} {^p}\mbq^H_{T_\tau} = 0
\]
for $i \neq w +\left(d_\gamma - d_\tau +k\right) + d_\tau= w+ d_\gamma
+ k$.

The statement \eqref{eqn-weightonorbit-2} follows from a dual proof.
\end{proof}

\subsection{A recursion}

The torus orbits $T_\tau \subseteq X$ equip $X^{an}$ with a Whitney
stratification (cf. \cite[Proposition 1.14]{DimcaHypersurface}. Since
the morphisms $\TVmap,\TXmap$ from \eqref{map-factorization} are
affine, algebraic, and stratified, the perverse sheaf underlying
$\TXmap_* {^p}\mbq^H_T$ is constructible with respect to this
stratification. Since $\TXmap_* {^p}\mbq^H_T$ is a mixed Hodge module
its
weight graded parts are direct sums of intersection complexes (with
possibly twisted coefficients) having support on the orbit closures
$X_\tau = \overline{T}_\tau$. We write
\begin{eqnarray}\label{eqn-weight-gr}
\gr^W_k \TXmap_* {^p}\mbq^H_T = \bigoplus_\gamma\IC_{X_\gamma}(
   {^p}\mcv_{(\gamma,k)}).
\end{eqnarray}
Here the direct sum is understood as a direct sum over all faces
$\gamma$ of $\sigma$, and ${^p}\mcv_{(\gamma,k)}$ is a polarizable
variation of Hodge structures of weight $k$ on $T_\gamma$.

Here and elsewhere, for a mixed Hodge module $\calM$ on $Y$, we regard
  as equivalent via Kashiwara equivalence, for $Y$ closed in $Y'$,
  $\IC_Y(\calM)$ and its direct image on $Y'$, without necessarily
  explicitly referencing $Y'$. Moreover, we say that \emph{$\calM$
    has weight $\geq k$} if $\gr^W_{\ell}\calM=0$ for $\ell< k$. 

Our first result on \eqref{eqn-weight-gr} is a recursive formula; we
continue to denote $d_\sigma$ by just $d$ and $X_\sigma$ by just
$X$:
  
\begin{prop}\label{prop:ICcomp}
The weight filtration on the mixed Hodge module $\TXmap_*{^p}\mbq^H_T$
satisfies the following properties.
\begin{enumerate}[(1)]
\item \label{ICcomp-1}$\gr^W_{d+e} \TXmap_*{^p}\mbq^H_T = 0$ for $e \neq 0, 1, \ldots, d$.
\item \label{ICcomp-2}$\supp \gr^W_{d+e} \TXmap_*{^p}\mbq^H_T \subseteq \bigcup_{d_\gamma \leq d-e} X_\gamma$.
\item \label{ICcomp-3}$\gr^W_d \TXmap_*{^p}\mbq^H_T=W_d\, \TXmap_* {^p}\mbq^H_T
  =\IC_{X_\sigma}$, by which we denote $\IC_X({}^p\mbq^H_T)$.
\item \label{ICcomp-4}$\gr^W_{d+1} \TXmap_* {^p}\mbq^H_T =
  \bigoplus_\tau\IC_{X_\tau}(L^0_{(\tau,d+1)} \otimes
           {^p}\mbq^H_{T_\tau})$ where
           \[ L^k_{(\tau,d+1)} := \IH^{d_\sigma-d_\tau+k+1}_c(X_{\sigma/\tau})
           \]
           is the intersection homology group with compact support of
           $X_{\sigma/\tau}$. 
\item \label{ICcomp-5}For $e> 1$, $\gr^W_{d+e} \TXmap_* {^p}\mbq^H_T = \bigoplus_\tau\IC_{X_\tau}(L^0_{(\tau,d+e)}\otimes {^p}\mbq^H_{T_\tau})$ where
\begin{eqnarray}\label{eqn-L-recursion}
L^k_{(\tau,d+e)} \simeq \frac{\left( \underset{\gamma \supseteq \tau
  }\bigoplus L^0_{(\gamma,d+e-1)}
  \otimes\IH_c^{d_\gamma-d_\tau+k+1}(X_{\gamma / \tau})
  \right)}{L^{k+1}_{\tau,d+e-1}}.
\end{eqnarray}
\end{enumerate}
An essential feature of the situation is that:
\begin{enumerate}[(1)]\setcounter{enumi}{5}
  \item\label{ICcomp-6}
     for all $e$, the module
    \[
      {^p}\calL^k_{(\tau,d+e)}:=\calH^ki^!_{\tau,\sigma}(\TXmap_*{^p}\QQ^H_{T_\sigma}/W_{d+e-1}\TXmap_*{^p}\QQ^H_{T_\sigma})
      \]
      is pure of weight $d+e+k$. It is zero for $d_\tau\geq d-e+1-k$
      and in any case isomorphic to a finite sum of copies of
    ${^p}\mbq^H_{T_\tau}$.
\end{enumerate}
\end{prop}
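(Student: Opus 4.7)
The plan is a simultaneous induction on $e$ that proves (5) and (6), with (1) and (3) as base data from standard MHM theory, and (2), (4) as corollaries. For (1) and (3): since $\TXmap$ is an affine open immersion of smooth varieties and ${^p}\mbq^H_T$ is pure of weight $d$, Saito's theory places the weights of $\TXmap_*{^p}\mbq^H_T$ in $[d,2d]$, with $W_d\TXmap_*{^p}\mbq^H_T=\image(\TXmap_!{^p}\mbq^H_T\to\TXmap_*{^p}\mbq^H_T)=\IC_X$, giving (1) and (3).

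For the base of (6) at $e=0$, $Q_0=\TXmap_*{^p}\mbq^H_T$. For $\tau=\sigma$, $i_\sigma=\TXmap$ is an open immersion, and the open-immersion adjunction $\TXmap^!\TXmap_*\simeq\id$ on $T$ gives ${^p}\calL^0_{(\sigma,d)}={^p}\mbq^H_T$ with higher $k$ vanishing. For $\tau\subsetneq\sigma$, apply $k^!$ to the exact triangle $k_!k^!{^p}\mbq^H_X\to{^p}\mbq^H_X\to\TXmap_*{^p}\mbq^H_T\to[1]$ where $k:X\smallsetminus T\hookrightarrow X$; since $k^!k_!=\id$, the first map becomes the identity, forcing $k^!\TXmap_*{^p}\mbq^H_T=0$. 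As $i_\tau$ factors through $k$, one gets $i_\tau^!\TXmap_*{^p}\mbq^H_T=0$, so all ${^p}\calL^k_{(\tau,d)}$ vanish.

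For the induction step, assuming (6) at level $e-1$, the short exact sequence $0\to\gr^W_{d+e-1}\TXmap_*\to Q_{e-1}\to Q_e\to 0$ yields a long exact sequence upon applying $i_\tau^!$. Since $Q_e$ has weight $\geq d+e$, Saito's weight estimates (underlying Lemma \ref{lem:propinvimage}) give $\mch^k i_\tau^! Q_e$ of weight $\geq d+e+k$, while ${^p}\calL^k_{(\tau,d+e-1)}$ is pure of weight $d+e-1+k$ by induction. Strictness of MHM morphisms with respect to the weight filtration therefore forces every transition map ${^p}\calL^k_{(\tau,d+e-1)}\to{^p}\calL^k_{(\tau,d+e)}$ to vanish, and the LES splits into
$$0\to{^p}\calL^k_{(\tau,d+e)}\to\mch^{k+1}i_\tau^!\gr^W_{d+e-1}\TXmap_*\to{^p}\calL^{k+1}_{(\tau,d+e-1)}\to 0.$$
Lemma \ref{lem:weightonorbitIC} identifies the middle term as $\bigoplus_{\gamma\supseteq\tau}L^0_{(\gamma,d+e-1)}\otimes\IH_c^{d_\gamma-d_\tau+k+1}(X_{\gamma/\tau})\otimes{^p}\mbq^H_{T_\tau}$, yielding (5). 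Both middle and right terms are pure of weight $d+e+k$ and are sums of copies of ${^p}\mbq^H_{T_\tau}$, so ${^p}\calL^k_{(\tau,d+e)}$ inherits both properties. Vanishing for $d_\tau\geq d-e+1-k$ follows from vanishing of $\IH_c^{d_\gamma-d_\tau+k+1}(X_{\gamma/\tau})$ when $d_\gamma<d_\tau+k+1$, combined with the inductive support bound $L^0_{(\gamma,d+e-1)}=0$ for $d_\gamma>d-e+1$.

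Statement (4) is the $(e,k)=(1,0)$ specialization of (5), using $\gr^W_d=\IC_X$ (so only $\gamma=\sigma$ contributes) and ${^p}\calL^1_{(\tau,d)}=0$ from the base case. Statement (2) is the $k=0$ case of the vanishing bound in (6). The main technical subtlety is the strictness-plus-weight argument that forces the LES connecting maps to vanish and enables the clean SES extraction; this hinges on the non-decrease of weights under $i_\tau^!$ recorded in Lemma \ref{lem:propinvimage}.
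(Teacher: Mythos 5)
Your strategy is essentially the paper's: induct on $e$, apply $i_\tau^!$ to the weight-truncation short exact sequences, use that $i^!$ does not decrease weights together with purity of the previously computed terms to kill the maps $\mch^k i^!_\tau Q_{e-1}\to \mch^k i^!_\tau Q_e$, split the long exact sequence into short exact sequences, compute the middle term via Lemma \ref{lem:weightonorbitIC}, and read off purity, constancy and the vanishing bound; the base case via $k^!\TXmap_*{}^p\QQ^H_T=0$ for the complementary closed embedding and $\TXmap^!\TXmap_*=\id$ is also how the paper starts. The mechanism you describe does prove Property \eqref{ICcomp-6} and the recursion among the objects ${}^p\calL^k_{(\tau,d+e)}=\mch^k i^!_\tau(\TXmap_*{}^p\QQ^H_T/W_{d+e-1})$.

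There is, however, one step you assert ("yielding (5)") but never argue, and it is precisely the step carrying the content of \eqref{ICcomp-4} and of the displayed decomposition in \eqref{ICcomp-5}, and the step needed to close your induction. A priori one only knows (purity, semisimplicity, equivariance) that $\gr^W_{d+e}\TXmap_*{}^p\QQ^H_T=\bigoplus_\tau\IC_{X_\tau}({}^p\mcv_{(\tau,d+e)})$ for \emph{some} constant variations ${}^p\mcv_{(\tau,d+e)}$; your short exact sequences compute $\mch^k i^!_\tau$ of the quotients $Q_e$, not these coefficients. The paper supplies the missing identification at every round: apply $i^!_\tau$ to $0\to\gr^W_{d+e}\to Q_e\to Q_{e+1}\to 0$, note $\mch^0 i^!_\tau\gr^W_{d+e}={}^p\mcv_{(\tau,d+e)}$ by Lemma \ref{lem:propinvimage}, and use that ${}^p\calL^0_{(\tau,d+e)}=\mch^0 i^!_\tau Q_e$ is pure of weight $d+e$ (your freshly proved \eqref{ICcomp-6}) while $\mch^0 i^!_\tau Q_{e+1}$ has weights $\geq d+e+1$, whence ${}^p\mcv_{(\tau,d+e)}\simeq{}^p\calL^0_{(\tau,d+e)}$. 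Without this you cannot apply Lemma \ref{lem:weightonorbitIC} to $\gr^W_{d+e}$ at the next stage, so the induction is not actually closed, and \eqref{ICcomp-4} and the $\gr^W$-statement in \eqref{ICcomp-5} remain unproved. A minor further point: the blanket appeal to Saito's theory for the weight range $[d,2d]$ in \eqref{ICcomp-1} is not a formal consequence of the axioms for arbitrary pure coefficients (a weight-$d$ variation with a long unipotent Jordan block pushes weights of the open direct image beyond $2d$); for the constant coefficients at hand the upper bound is true, but the clean route---and the paper's---is to deduce it from the support bound \eqref{ICcomp-2}, which you in any case obtain from \eqref{ICcomp-6}.
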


\begin{proof}
In order to ease the notation we denote in this proof by $\mbq_\gamma$
the Hodge module ${^p}\mbq^H_{T_\gamma}$.

We will proceed by induction on $e$. Obviously, for $e\ll 0$, all
parts hold trivially. Assuming Property \eqref{ICcomp-6} up to $e$ as
well as Property \eqref{ICcomp-5} up to $e-1$, we show Property
\eqref{ICcomp-6} for $e+1$ and Property \eqref{ICcomp-5} for
$e$. Property \eqref{ICcomp-2} is then a direct
consequence. Properties \eqref{ICcomp-3} and \eqref{ICcomp-4} are the
induction start and are proved in the same fashion as the induction
step, but look less uniform.

Since $\mbq_\sigma$ has weight $d$ and direct images do not decrease
weights, the direct image $\TXmap_*\mbq_\sigma = \mch^0 \TXmap_*\mbq_\sigma$ has
weight $\geq d$. Property \eqref{ICcomp-1} is hence a consequence of
Property \eqref{ICcomp-2}.

We make the following Ansatz for the part
of $\TXmap_*\mbq_\sigma$ of weight $d$:
\[
W_d \TXmap_* \mbq_\sigma = \bigoplus_\gamma\IC_{X_\gamma}({^p}\mcv_{(\gamma,d)}),
\]
where ${^p}\mcv_{(\gamma,d)}$ is a polarizable variation of Hodge
structures on $T_\gamma$ of weight $d$.

We begin with the lowest weight case $e=0$; then \eqref{ICcomp-5} is
vacuous. Consider the exact
sequence
\begin{equation}\label{eq:exseqwd}
0 \lra \bigoplus_\gamma\IC_{X_\gamma}({^p}\mcv_{(\gamma,d)}) \lra \TXmap_* \mbq_\sigma  \lra \TXmap_* \mbq_\sigma / W_d \TXmap_* \mbq_\sigma \lra 0.
\end{equation}
Let $\tau$ be an $d_\tau$-dimensional face of $\sigma$;
$i_{\tau,\sigma}: T_{\tau} \ra X$ 
is the natural embedding. Apply the functor $i^!_{\tau,\sigma}$ to
\eqref{eq:exseqwd}, recalling that it is left exact and does not
decrease weights (\emph{cf}. Lemma
\ref{lem:propinvimage}.(1)). Because of Lemma
\ref{lem:propinvimage}.(2),(3) we get a long exact sequence
\[
0 \lra {^p}\mcv_{(\tau,d)} \lra \mch^0 i_{\tau,\sigma}^! \TXmap_* \mbq_\sigma \lra
\mch^0 i^!_{\tau,\sigma}\left( \TXmap_* \mbq_\sigma / W_d \TXmap_* \mbq_\sigma \right)
\lra \cdots.
\]
Since $i^!_{\tau,\sigma} \TXmap_* \mbq_\sigma = 0$ for $d_\tau < d$ by
Lemma \ref{lem:propinvimage}.\eqref{propinvimage-3} we
obtain
Property \eqref{ICcomp-6} for $e=0$ and find
${^p}\mcv_{(\tau,d)} ={^p}\calL^k_{(\tau,d)}= 0$ for those $\tau$. 
In the case $d_\tau=d$,
we have $\tau = \sigma$ and $i^!_{\sigma,\sigma} \TXmap_* \mbq_\sigma =
\mbq_\sigma$  and therefore obtain
\[
0 \lra {^p}\mcv_{(\sigma,d)} \lra \mbq_\sigma \lra \mch^0
i^!_{\sigma,\sigma}\left( \TXmap_* \mbq_\sigma / W_d \TXmap_* \mbq_\sigma \right).
\]
Since $\TXmap_* \mbq_\sigma / W_d \TXmap_* \mbq_\sigma$ has weight $> d$ and
$i^!_{\sigma,\sigma}$ does not decrease weight,
${^p}\mcv_{(\sigma,d)}\simeq \mbq_\sigma$. Altogether we have:
\[
{^p}\mcv_{(\tau,d)} = \begin{cases}\mbq_\sigma & \text{for}\; d_\tau=d,
  \\ 0 & \text{for}\; d_\tau < d. \end{cases}
\]
Thus, for all $\tau$, ${^p} \mcl^0_{(\tau,d)} = {^p}\mcv_{(\tau,d)}$  and 
${^p}\calL^{\neq0}_{(\tau,d)}$ vanishes.
This shows Property \eqref{ICcomp-3} (and embodies Property
\eqref{ICcomp-6} for $e=0$).

We next consider the weight $d+1$ part. To begin, we use the fact that
$W_d \TXmap_* \mbq_\sigma =\IC_{X_\sigma}$ in order to compute
$\mch^k(i_{\tau,\sigma}^! \left( \TXmap_* \mbq_\sigma / W_d \TXmap_*
\mbq_\sigma\right))$ for each face $\tau$ and all $k \geq 0$. The exact
sequence \eqref{eq:exseqwd} becomes
\[
0 \lra\IC_{X_\sigma} \lra \TXmap_* \mbq_\sigma \lra \TXmap_* \mbq_\sigma / W_d \TXmap_* \mbq_\sigma \lra 0.
\]
We apply again $i_{\tau,\sigma}^!$ to this short exact sequence and obtain
\[
\xymatrix{0 \ar[r] & \mch^0 i_{\tau,\sigma}^!\IC_{X_\sigma} \ar[r] & \mch^0
  i_{\tau,\sigma}^! \TXmap_* \mbq_\sigma \ar[r] & \mch^0
  i_{\tau,\sigma}^! \left( \TXmap_* 
  \mbq_\sigma / W_d \TXmap_* \mbq_\sigma \right) \ar[lld] \\
  & \mch^1 i_{\tau,\sigma}^!\IC_{X_\sigma} \ar[r] & \mch^1 i_{\tau,\sigma}^! \TXmap_*
  \mbq_\sigma \ar[r] & \mch^1 i_{\tau,\sigma}^! \left( \TXmap_* \mbq_\sigma / W_d
  \TXmap_*\mbq_\sigma \right) \ar[lld] \\   
  & \mch^2 i_{\tau,\sigma}^!\IC_{X_\sigma} \ar[r] & \cdots} .
\]

For $d_\tau = d$ we have $\mch^k i_{\sigma,\sigma}^! \TXmap_* \mbq_\sigma = \mch^k
i_{\sigma,\sigma}^!\IC_{X_\sigma} = 0$ for $k \geq 1$ (as $i_{\sigma,\sigma}$ is an open
embedding and therefore $i^!_{\sigma,\sigma}$ is exact) and so
\[
  {{^p}}\mcl^k_{(\sigma,d+1)} =\mch^k i^!_{\sigma,\sigma} (\TXmap_* \mbq_\sigma / W_d
  \TXmap_* \mbq_\sigma)
  \]
vanishes for all $k$ (the case $k= 0$ follows from
  ${^p}\mcl^0_{(\sigma,d)} = \mbq_\sigma$).

For $d_\tau < d$ we have $\mch^k i^!_{\tau,\sigma} \TXmap_* \mbq_\sigma = 0$ for
all $k$, hence by Lemma \ref{lem:weightonorbitIC} we have
\begin{equation}\label{eqn-Ld+1}
  {{^p}}\mcl^k_{(\tau,d+1)} :=
  \mch^k i^!_{\tau,\sigma} \left( \TXmap_* \mbq_\sigma /W_d \TXmap_* \mbq_\sigma\right)
  \simeq \mch^{k+1} i^!_{\tau,\sigma}\IC_{X_\sigma}
  \simeq\IH_c^{d_\sigma- d_{\tau} +k+1}(X_{\sigma / \tau}) \otimes \mbq_{\tau}.
\end{equation}
In particular, ${^p}\calL^k_{(\tau,d+1)}=L^k_{(\tau,d+1)}\otimes
\QQ_\tau$ with
$L^k_{(\tau,d+1)}=\IH_c^{d_\sigma-d_\tau+k+1}(X_{\sigma/\tau})$.
Since $\IH_c^{k+1}(X_{\sigma / \sigma}) = 0$ for all $k \geq 0$,
formula \eqref{eqn-Ld+1} is also correct for $\tau = \sigma$. Notice that ${^p}
\mcl^k_{(\tau,d+1)}$ is pure of weight $d+1+k$ and since
$\IH_c^{d_\sigma -d_\tau +k+1}(X_{\sigma /\tau})=0$ for $d_\sigma -
d_\tau + k +1 > 2(d_\sigma - d_\tau)$ we have ${^p}\mcl^k_{(\tau,d+1)} =
0$ for $d_\tau \geq d-k$. This shows Property \eqref{ICcomp-6} in the
case $e=1$.

In order to compute the weight $(d+1)$ part of $\TXmap_* \mbq_\sigma$ we make
the Ansatz
\[
\gr^W_{d+1}\TXmap_* \mbq_\sigma =
\bigoplus_\gamma\IC_{X_\gamma}({^p}\mcv_{(\gamma,d+1)})
\]
and consider
the exact sequence
\begin{eqnarray}\label{eq-ses-weight-d+1}
0 \lra \bigoplus_\gamma\IC_{X_\gamma}({^p}\mcv_{(\gamma,d+1)}) \lra
\TXmap_* \mbq_\sigma / W_d \TXmap_* \mbq_\sigma \lra \TXmap_* \mbq_\sigma / W_{d+1}
\TXmap_* \mbq_\sigma \lra 0.
\end{eqnarray}
The functor $i_{\tau,\sigma}^!$ produces the long exact cohomology sequence
\[
0 \lra {^p}\mcv_{(\tau,d+1)} \lra {^p}\mcl^0_{(\tau,d+1)} \lra
\mch^0i_{\tau.\sigma}^! \left( \TXmap_* \mbq_\sigma / W_{d+1} \TXmap_* \mbq_\sigma
\right) \lra \cdots.
\]
Since $i^!_{\tau,\sigma}$ does not decrease weight, the third term has weight
$d+2$ or more, and since ${^p}\mcl^0_{(\tau,d+1)}$ is pure of weight $d+1$,
\eqref{eqn-Ld+1} yields
\[
{^p}\mcv_{(\tau,d+1)} = {^p}\mcl^0_{(\tau,d+1)} =\IH_c^{d_\sigma-d_\tau +1}(X_{\sigma/ \tau}) \otimes \mbq_\tau,
\]
which shows Property \eqref{ICcomp-4}.

With this, \eqref{eq-ses-weight-d+1} becomes now 
\[
0 \lra \bigoplus_\gamma\IC_{X_\gamma}({^p}\mcl^0_{(\gamma,d+1)}) \lra
\TXmap_* \mbq_\sigma / W_d \TXmap_* \mbq_\sigma \lra \TXmap_*
\mbq_\sigma / W_{d+1} \TXmap_* \mbq_\sigma \lra 0,
\]
and applying $i_{\tau,\sigma}^!$ we obtain
\[
\xymatrix{
  0 \ar[r] & L^0_{(\tau,d+1)} \otimes \mbq_\tau = \underset{\gamma \supseteq \tau}\bigoplus \left( L^0_{(\gamma,d+1)} \otimes\IH_c^{d_\gamma-d_\tau}(X_{\gamma / \tau}) \otimes \mbq_\tau \right) \ar[r] &L^0_{(\tau,d+1)} \otimes \mbq_\tau \ar[r] &\mch^0 i_{\tau,\sigma}^!\left(\TXmap_* \mbq_\sigma / W_{d+1} \TXmap_* \mbq_\sigma \right) \ar[lld] \\ 
& \underset{\gamma \supseteq \tau}\bigoplus \left( L^0_{(\gamma,d+1)} \otimes\IH_c^{d_\gamma-d_\tau+1}(X_{\gamma/\tau}) \otimes \mbq_\tau \right) \ar[r] & L^1_{(\tau,d+1)}\otimes \mbq_\tau \ar[r] & \mch^1 i_{\tau,\sigma}^!\left(\TXmap_* \mbq_\sigma / W_{d+1} \TXmap_* \mbq_\sigma \right) \ar[lld] \\
& \underset{\gamma \supseteq \tau}\bigoplus \left( L^0_{(\gamma,d+1)} \otimes\IH_c^{d_\gamma-d_\tau+2}(X_{\gamma/\tau}) \otimes \mbq_\tau \right) \ar[r] & \cdots.}
\]
Here, the first column is owed to Lemma
\ref{lem:weightonorbitIC} and the equality in the first row follows from Lemma \ref{lem:propinvimage}(3) resp. Lemma \ref{lem:propIHtoric} (2).

Since both $\bigoplus_{\gamma \supseteq \tau} \left( L^0_{(\gamma,d+1)}
\otimes\IH_c^{d_\gamma-d_\tau+k}(X_{\gamma / \tau}) \otimes \mbq_\tau
\right)$ and $L^k_{(\tau,d+1)} \otimes \mbq_\tau$ are pure of weight
$d+1+k$, and since $\mch^k i^!_{\tau,\sigma} ( \TXmap_* \mbq_\sigma / W_{d+1} \TXmap_*
\mbq_\sigma)$ has weight $>(d+1+k)$ the long exact sequence splits
into sequences
\[
0 \lra \mch^k i^!_{\tau,\sigma} ( \TXmap_* \mbq_\sigma / W_{d+1} \TXmap_* \mbq_\sigma)
\lra \underset{\gamma \supseteq \tau}\bigoplus \left( L^0_{(\gamma,d+1)}
\otimes\IH_c^{d_\gamma-d_\tau+k+1}(X_{\gamma / \tau}) \otimes
\mbq_\tau \right) \lra L^{k+1}_{(\tau,d+1)} \otimes \mbq_\tau \lra 0, 
\]
pure of weight $d+1+k+1$. The category of pure Hodge modules is
semisimple and so there is a (non-canonical) splitting
which induces an identification
\begin{gather*}
\mch^k i^!_{\tau,\sigma} ( \TXmap_* \mbq_\sigma / W_{d+1} \TXmap_* \mbq_\sigma) \,\,\,\simeq\,\,\, \frac{\left( \underset{\gamma \supseteq \tau }\bigoplus L^0_{(\gamma,d+1)} \otimes\IH_c^{d_\gamma-d_\tau+k+1}(X_{\gamma / \tau}) \right)}{L^{k+1}_{(\tau,d+1)}} \otimes \mbq_\tau 
\end{gather*}
as pure Hodge modules.
We now define vector spaces ${^p}\mcl^k_{(\tau,d+2)}$ by
\[
{^p}\mcl^k_{(\tau,d+2)} := L^k_{(\tau,d+2)} \otimes \mbq_\tau := \mch^k i^!_{\tau,\sigma} ( \TXmap_* \mbq_\sigma / W_{d+1} \TXmap_* \mbq_\sigma), 
\]
a pure Hodge module of weight $d+2+k$. Since $L^0_{(\gamma,d+1)}$ is
zero for $d_\gamma \geq d$ and $\IH_c^{d_\gamma-d_\tau+k+1}(X_{\gamma
  / \tau})$ is zero for $d_\gamma - d_\tau +k+1 > 2(d_\gamma -
d_\tau)$, the term ${^p}\mcl^{k}_{(\tau,d+2)}$ is zero for $d_\tau
\geq d-1-k$; this proves Property \eqref{ICcomp-6} for $e=2$.

\medskip

We will now provide the inductive step, much in parallel to the
above. Assume that
\[
{^p}\mcl^k_{(\tau,d+e)} = L^k_{(\tau,d+e)} \otimes \mbq_\tau = \mch^k i^!_{\tau,\sigma} ( \TXmap_* \mbq_\sigma / W_{d+e-1} \TXmap_* \mbq_\sigma) 
\]
is pure of weight $d+e+k$ and ${^p}\mcl^{k}_{(\tau,d+e)} = 0$ for
$d_\tau \geq d-e+1-k$ (\emph{i.e.}, Property 6 at level $e$).

In order to compute the weight $d+e$ part of $\TXmap_* \mbq_\sigma$ we make
the Ansatz
\[
\gr^W_{d+e} \TXmap_* \mbq_\sigma =
\bigoplus_\gamma\IC_{X_\gamma}({^p}\mcv_{(\gamma,d+e)})
\]
and consider
the exact sequence
\begin{equation}\label{eqn-ansatzd+e}
0 \lra \bigoplus_\gamma\IC_{X_\gamma}({^p}\mcv_{(\gamma,d+e)}) \lra
\TXmap_* \mbq_\sigma / W_{d+e-1} \TXmap_* \mbq_\sigma \lra \TXmap_* \mbq_\sigma /
W_{d+e} \TXmap_* \mbq_\sigma \lra 0
\end{equation}
We apply the functor $i_{\tau,\sigma}^!$ and get the long exact cohomology
sequence
\[
0 \lra {^p}\mcv_{(\tau,d+e)} \lra {^p}\mcl^0_{(\tau,d+e)} \lra
\mch^0i_{\tau,\sigma}^! \left( \TXmap_* \mbq_\sigma / W_{d+e} \TXmap_* \mbq_\sigma
\right) \lra \cdots.
\]
Since $i^!_{\tau,\sigma}$ does not decrease weight, the third term has weight
greater than $(d+e)$, and as ${^p}\mcl^0_{(\tau,d+e)}$ is pure of weight $d+e$
we find
\[
{^p}\mcv_{(\tau,d+e)} = {^p}\mcl^0_{(\tau,d+e)}.
\]
The exact sequence \eqref{eqn-ansatzd+e} now becomes
\[
0 \lra \bigoplus_\gamma\IC_{X_\gamma}({^p}\mcl^0_{(\gamma,d+e)}) \lra \TXmap_* \mbq_\sigma / W_{d+e-1} \TXmap_* \mbq_\sigma \lra \TXmap_* \mbq_\sigma / W_{d+e} \TXmap_* \mbq_\sigma \lra 0,
\]
and $i_{\tau,\sigma}^!$ induces
\[
\xymatrix{0 \ar[r] & L^0_{(\tau,d+e)} \otimes \mbq_\tau  \ar[r] &L^0_{\tau,d+e} \otimes \mbq_\tau \ar[r] &\mch^0 i_{\tau,\sigma}^!\left(\TXmap_* \mbq_\sigma / W_{d+e} \TXmap_* \mbq_\sigma \right) \ar[lld] \\ 
& \underset{\gamma \supseteq \tau}\bigoplus \left( L^0_{(\gamma,d+e)} \otimes\IH_c^{d_\gamma-d_\tau+1}(X_{\gamma/\tau}) \otimes \mbq_\tau \right) \ar[r] & L^1_{\tau,d+e}\otimes \mbq_\tau \ar[r] & \mch^1 i_{\tau,\sigma}^!\left(\TXmap_* \mbq_\sigma / W_{d+e} \TXmap_* \mbq_\sigma \right) \ar[lld] \\
& \underset{\gamma \supseteq \tau}\bigoplus \left( L^0_{(\gamma,d+e)} \otimes\IH_c^{d_\gamma-d_\tau+2}(X_{\gamma/\tau}) \otimes \mbq_\tau \right) \ar[r] & \cdots.}
\]
Since both $\bigoplus_{\gamma \supseteq \tau} \left(
L^0_{(\gamma,d+e)} \otimes\IH_c^{d_\gamma-d_\tau+k}(X_{\gamma / \tau})
\otimes \mbq_\tau \right)$ and $L^k_{\tau,d+e} \otimes \mbq_\tau$ are
pure of weight $d+e+k$, and since furthermore $\mch^k
i_{\tau,\sigma}^! ( \TXmap_* \mbq_\sigma / W_{d+e} \TXmap_*
\mbq_\sigma)$ has weight greater than $ (d+e+k)$, the long exact
sequence splits in $\MHM(X_\sigma)$ into sequences
\[
0 \lra \mch^k i^!_{\tau,\sigma} ( \TXmap_* \mbq_\sigma / W_{d+e} \TXmap_* \mbq_\sigma) \lra \underset{\gamma \supseteq \tau}\bigoplus \left( L^0_{(\gamma,d+e)} \otimes\IH_c^{d_\gamma-d_\tau+k+1}(X_{\gamma / \tau}) \otimes \mbq_\tau \right) \lra L^{k+1}_{(\tau,d+e)} \otimes \mbq_\tau \lra 0.
\]
The center term is pure of weight $d+e+k+1$; hence the
outer terms are as well. Since the category of pure
Hodge modules is semisimple, the sequence splits (non-canonically) and
there is an
identification
\[
\mch^k i^!_{\tau,\sigma} ( \TXmap_* \mbq_\sigma / W_{d+e} \TXmap_* \mbq_\sigma) \,\,\,\simeq\,\,\, \frac{\left( \underset{\gamma \supseteq \tau }\bigoplus L^0_{(\gamma,d+e)} \otimes\IH_c^{d_\gamma-d_\tau+k+1}(X_{\gamma / \tau}) \right)}{L^{k+1}_{(\tau,d+e)}} \otimes \mbq_\tau. 
\]
We now define $ {^p}\mcl^k_{(\tau,d+e+1)}$ and $L^k_{(\tau,d+e+1)}$ by
\[
{^p}\mcl^k_{(\tau,d+e+1)} := L^k_{(\tau,d+e+1)} \otimes \mbq_\tau := \mch^k i^!_{\tau,\sigma} ( \TXmap_* \mbq_\sigma / W_{d+e} \TXmap_* \mbq_\sigma), 
\]
and reiterate that ${^p}\mcl^k_{(\tau,d+e+1)}$ is pure of weight
$d+e+1+k$. Since $L^0_{(\gamma,d+e)}$ is zero for $d_\gamma \geq
d-e+1$ and $\IH_c^{d_\gamma-d_\tau+k+1}(X_{\gamma /\tau})$ is zero for
$d_\gamma - d_\tau +k  + 1 > 2(d_\gamma - d_\tau)$, the term
${^p}\mcl^{k}_{(\tau,d+e+1)}$
vanishes for $d_\tau \geq d-e+1-k$.  This finishes the inductive step
for Property 
\eqref{ICcomp-6}, establishes \eqref{ICcomp-5} in the process, and  hence
completes the proof.
%
\end{proof}
\begin{rem}
  It has been pointed out by A.~L\"orincz to us that the constancy of
  the local systems ${}^p\mcl_{(\tau,d+e)}$ can be also seen as follows:
  ${}^p\QQ^H_T$ is equivariant, and hence so is
  $\TXmap_*{}^p\QQ^H_T$. Since all orbit stabilizers are connected,
  \cite[Theorem 11.6.1]{Hotta} shows that each orbit can only support
      one equivariant local system, the constant one. See
      \cite{LW-equiDcat} for more details on equivariant
      $D$-modules. 
\end{rem}
\begin{ex}\label{ex:d=4}
We give an explicit description of the vector spaces
$L^k_{(\tau,d+e)}$ from Proposition \ref{prop:ICcomp} in the
case $d=4$ for $e\geq 1$.
Here, the $(k,e)$-entry for $L^k_{(\tau_i,d+e)}$ is a sum over all
$\gamma_j$ that arise. For example, $L^0_{(\tau_2,6)}$ is the sum over
all $\gamma_3$ of dimension $3$ with $\tau_2\subseteq
\gamma_3\subseteq \sigma$ of the terms listed under $k=0$, $e=2$ in
Table \ref{table-L0tau2}.
\\
\begin{center}
The Hodge-structures $L^k_{(\tau_0,d+e)}$ for the unique $\tau_0 \subseteq \sigma$ with $\dim \tau_0 = 0$:
\end{center}
\scalebox{0.7}{\begin{minipage}{1.4\textwidth}
   \begin{equation}
    \begin{array}{c|cccc}
      k=3&L^3_{(\tau_0,5)}=\IH^8_c(X_{\sigma / \tau_0})&0&0&0\\
      k=2&0&L^2_{(\tau_0,6)} = \frac{L^0_{(\gamma_3,5)}  \otimes \IH^6_c(X_{\gamma_3 / \tau_0})}{L^3_{(\tau_0,5)}}&0&0\\
      k=1&L^1_{(\tau_0,5)}=\IH^6_c(X_{\sigma / \tau_0})&0&L^1_{(\tau_0,7)} = \frac{L^0_{(\gamma_2,6)} \otimes \IH^4_c(\gamma_2 / \tau_0)}{L^2_{(\tau_0,6)}}&0\\
      k=0&0&L^0_{(\tau_0,6)} = \frac{L^0_{(\gamma_3,5)}  \otimes \IH^4_c(X_{\gamma_3 / \tau_0}) \oplus L^0_{(\gamma_1,5)} \otimes \IH^2_c(X_{\gamma_1 / \tau_0})}{L^1_{(\tau_0,5)}}&0&L^0_{(\tau_0,8)} = \frac{L^0_{(\gamma_1,7)} \otimes \IH^2_c(X_{\gamma_1 / \tau_0})}{L^1_{(\tau_0,7)}}\\\hline
      &e=1 &e=2 &e=3 &e=4
    \end{array}
    \end{equation}
\end{minipage}}\\[3em]
\begin{center}
The Hodge-structures $L^k_{(\tau_1,d+e)}$ for all $\tau_1 \subseteq \sigma$ with $\dim \tau_1 = 1$:
\end{center}
\scalebox{0.7}{\begin{minipage}{1.4\textwidth}
   \begin{equation}
    \begin{array}{c|cccc}
      k=3&0&0&0&0\\
      k=2&L^2_{(\tau_1,5)}=\IH^6_c(X_{\sigma / \tau_1})&0&0&0\\
      k=1&0&L^1_{(\tau_1,6)} = \frac{L^0_{(\gamma_3,5)}  \otimes \IH^4_c(X_{\gamma_3 / \tau_1})  }{L^2_{(\tau_1,5)}}&0&0\\
      k=0&L^0_{(\tau_1,5)}=\IH^4_c(X_{\sigma / \tau_1})&0&L^0_{(\tau_1,7)} = \frac{L^0_{(\gamma_2,6)} \otimes \IH^2_c(\gamma_2 / \tau_1)}{L^1_{(\tau_1,6)}}&0\\\hline
      &e=1 &e=2 &e=3 &e=4
    \end{array}
    \end{equation}
\end{minipage}}\\[3em]
\begin{center}
The Hodge-structures $L^k_{(\tau_2,d+e)}$ for all $\tau_2 \subseteq \sigma$ with $\dim \tau_2 = 2$:
\end{center}
\scalebox{0.7}{\begin{minipage}{1.4\textwidth}
   \begin{equation}\label{table-L0tau2}
    \begin{array}{c|cccc}
      k=3&0&0&0&0\\
      k=2&0&0&0&0\\
      k=1&L^1_{(\tau_2,5)} = \IH^4_c(X_{\sigma / \tau_2})&0&0&0\\
      k=0&0&L^0_{(\tau_2,6)} = \frac{L^0_{(\gamma_3,5)}  \otimes \IH^2_c(X_{\gamma_3 / \tau_2})  }{L^1_{(\tau_2,5)}} &0&0\\\hline
      &e=1 &e=2 &e=3 &e=4
    \end{array}
    \end{equation}
\end{minipage}}\\[3em]
\begin{center}
The Hodge-structures $L^k_{(\tau_3,d+e)}$ for all $\tau_3 \subseteq \sigma$ with  $\dim \tau_3 = 3$:
\end{center}
\scalebox{0.7}{\begin{minipage}{1.4\textwidth}
   \begin{equation}
    \begin{array}{c|cccc}
      k=3&0&0&0&0\\
      k=2&0&0&0&0\\
      k=1&0&0&0&0\\
      k=0&L^0_{(\tau_3,5)} = \IH^2_c(X_{\sigma / \tau_3})&0&0&0\\\hline
      &e=1 &e=2 &e=3 &e=4
    \end{array}
    \end{equation}
\end{minipage}}\\[3em]

The table for $\sigma = \tau_4$ is determined by Proposition
\ref{prop:ICcomp}, Properties \eqref{ICcomp-2} and
\eqref{ICcomp-3}; it has only zero entries since $\sigma$ only
contributes to weight $d$.
\schluss\end{ex}

\subsection{An explicit formula}

If we set
\[
\ih^k_c(X_{\gamma / \tau}) := \dim_\mbq \IH^k_c(X_{\gamma /
  \tau})
\]
we can rewrite the dimension of $L^0_{(\gamma,k)}$ in 
Example \ref{ex:d=4} as follows:
\tiny
\begin{align*}
\dim_\mbq L^0_{(\tau_3,5)} =&\; \ih^2_c(X_{\sigma/\tau_3})\\
\dim_\mbq L^0_{(\tau_2,6)} =&\; \ih^2_c(X_{\sigma/\gamma_3})
\ih^2_c(X_{\gamma_3 /\tau_2}) - \ih^4_c(X_{\sigma /\tau_2}) \\
\dim_\mbq L^0_{(\tau_1,7)} =&\; \ih^2_c(X_{\sigma/\gamma_3})
\ih^2_c(X_{\gamma_3 /\gamma_2}) \ih^2_c(X_{\gamma_2/\tau_1}) -
\left[\ih^4_c(X_{\sigma /\gamma_2}) \ih^2_c(X_{\gamma_2/\tau_1})
+\ih^2_c(X_{\sigma / \gamma_3}) \ih^4_c(X_{\gamma_3 / \tau_1})\right] +
\ih^6_c(X_{\sigma/\tau_1}) \\
\dim_\mbq L^0_{(\tau_0,8)} =&\; \ih^2_c(X_{\sigma/\gamma_3})
\ih^2_c(X_{\gamma_3 /\gamma_2}) \ih^2_c(X_{\gamma_2/\gamma_1})
\ih^2_c(X_{\gamma_1 /\tau_0})\\
&\phantom{=}- \left[\ih^4_c(X_{\sigma /\gamma_2})
\ih^2_c(X_{\gamma_2/\gamma_1})\ih^2_c(X_{\gamma_1 /\tau_0})
+\ih^2_c(X_{\sigma / \gamma_3}) \ih^4_c(X_{\gamma_3 / \gamma_1})\ih^2_c(X_{\gamma_1 /\tau_0})
+\ih^2_c(X_{\sigma/\gamma_3})  \ih^2_c(X_{\gamma_3
  /\gamma_2})\ih^4_c(X_{\gamma_2 /\tau_0})\right]\\
&\phantom{=}+\left[\ih^6_c(X_{\sigma/\gamma_1})\ih^2_c(X_{\gamma_1 /\tau_0}) +
\ih^4_c(X_{\sigma /\gamma_2})\ih^4_c(X_{\gamma_2 /\tau_0})
+\ih^2_c(X_{\sigma /\gamma_3})\ih^6_c(X_{\gamma_3 / \tau_0})\right] -
\ih^8_c(X_{\sigma / \tau_0})\\
\dim_\mbq L^0_{(\tau_1,5)} =&\; \ih^4_c(X_{\sigma /\tau_1}) \\
\dim_\mbq L^0_{(\tau_0,6)} =&\; \left[\ih^2_c(X_{\sigma/\gamma_3})\ih^4_c(X_{\gamma_3 / \tau_0}) + \ih^4_c(X_{\sigma / \gamma_1})\ih^2_c(X_{\gamma_1 / \tau_0})\right] - \ih^6_c(X_{\sigma / \tau_0})
\end{align*}
\normalsize
Again, each expression is to be summed over all possible faces
$\gamma_i$ of dimension $i$ that satisfy the requisite containment
conditions. 

The particular structure of the formulas for the dimension of these
local systems is not coincidental. Our
next task is to turn recursion \eqref{eqn-L-recursion} for
$L^0_{(\gamma,k)}$ into a general explicit combinatorial formula.

We set
\[
\mu_\tau^\sigma(e):=\dim_\QQ (L^0_{(\tau, d+e)})
%
\]
for the rank of the constant local system ${^p}\mcl_{(\tau,d+e)}$ corresponding
to the intersection complex $\IC_{X_\tau}({^p}\mcl_{(\tau,d+e)})$
occurring in $\gr^W_{d+e}\TXmap_*{^p}\QQ^H_T$. 
We further introduce the following abbreviations.
\begin{ntn}
  Let
  \begin{align}
    \ih^\gamma_\tau(k)&:=\dim_\QQ(\IH_c^{d_\gamma-d_\tau+k}(X_{\gamma/\tau}));\\
    \ell_\gamma(k,e)&:=\dim_\QQ(L^k_{(\gamma,d+e)}).
  \end{align}
  Then
  \[
  \ell_\tau(k,1)=\dim_\QQ(L^k_{\tau,d+1})=
  \dim_\QQ(\IH^{d_\sigma-d_\tau+k+1}_c(X_{\sigma/\tau}))=\ih^\sigma_\tau(k+1)
  \]
  by Proposition \ref{prop:ICcomp}.\eqref{ICcomp-4}, while the recursion
  \eqref{eqn-L-recursion} yields
  \begin{equation}\label{eqn-L-recursion2}
  \ell_\tau(k,e)=\left(\sum_{\gamma\supseteq
    \tau}\ell_\gamma(0,e-1)\cdot \ih^\gamma_\tau(k+1)\right)-\ell_\tau(k+1,e-1).
  \end{equation}

  Let $0<t\in\NN$ and let $\pi=[\pi_1,\ldots,\pi_m]\partitions t$ be a
  \emph{partition}.
   of $t$ of \emph{length}
  $|\pi|=m$.  (We always assume that ``partition'' implies that each
  $\pi_j$ is nonzero, and that the entries are ordered. The partitions
  of $3$ are $[3]$, $[1,2]$, $[2,1]$ and $[1,1,1]$).  We consider
  \emph{flags} $\Gamma=
  (\gamma_{d_0}\subsetneq\gamma_{d_1}\subsetneq\ldots\subsetneq\gamma_{d_m})$
  of faces of $\sigma$, of length $|\Gamma|=m$. Here, $d_i$ is the
  dimension of $\gamma_{d_i}$. Denote by $\ih_\Gamma(\pi)$ the product
  \[
  \ih_\Gamma(\pi):= \ih_{\gamma_{d_0}}^{\gamma_{d_1}}(\pi_1)\cdot\ldots\cdot
  \ih_{\gamma_{d_{m-1}}}^{\gamma_{d_m}}(\pi_m).
  \]
  For comparable faces $\gamma\subsetneq \gamma'$, set
  \[
  \ih^{\gamma'}_\gamma(\pi)=\sum_{\substack{|\Gamma|=|\pi|\\\Gamma=(\gamma,\ldots,\gamma')}}
  \ih_\Gamma(\pi),
  \]
  and
  \[
  \ih^{\gamma'}_\gamma(t,m)=\sum_{\substack{\pi\partitions
      t\\|\pi|=m}}\ih^{\gamma'}_\gamma(\pi).
  \]
\schluss\end{ntn}

\begin{prop}\label{prop-chains}
  The rank of the local system
  ${^p}\mcl_{(\tau,d+e)}=L^0_{(\tau,d+e)}\otimes \QQ_\tau$
  occurring in
  $\gr^W_{d+e}\TXmap_*{^p}\QQ^H_T$ is
  \[
  \mu^\sigma_\tau(e)=\sum_m(-1)^{m+d_\sigma}\ih_\tau^\sigma(e,m).
  \]
\end{prop}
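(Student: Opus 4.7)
The plan is to proceed by strong induction on $e$. The base case $e=1$ is an immediate reformulation of Proposition \ref{prop:ICcomp}.\eqref{ICcomp-4}: the unique partition of $1$ is $[1]$, of length $m=1$, paired with the unique length-$1$ flag $(\tau,\sigma)$, so the right-hand side reduces to $\ih^\sigma_\tau(1)=\dim\IH^{d_\sigma-d_\tau+1}_c(X_{\sigma/\tau})$, which equals $\mu^\sigma_\tau(1)$.

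For the inductive step, I would first \emph{telescope} the recursion \eqref{eqn-L-recursion2} in order to eliminate every $\ell_\tau(k,\cdot)$ with $k\ge 1$ in favor of $\ell_\gamma(0,e')$ with $e'<e$. Setting $F(j):=\ell_\tau(j,e-j)$ and $A(j):=\sum_{\gamma\supseteq\tau}\ell_\gamma(0,e-1-j)\,\ih^\gamma_\tau(j+1)$, the recursion reads $F(j)=A(j)-F(j+1)$. Since $F(j)$ vanishes for $j$ large by Proposition \ref{prop:ICcomp}.\eqref{ICcomp-6}, telescoping gives
\[
\ell_\tau(0,e)=\sum_{j=0}^{e-1}(-1)^j A(j),
\]
where the $j=e-1$ term uses the boundary conditions $\ell_\sigma(0,0)=1$ and $\ell_\gamma(0,0)=0$ for $\gamma\ne\sigma$ (Proposition \ref{prop:ICcomp}.\eqref{ICcomp-3}) to isolate the single contribution $(-1)^{e-1}\ih^\sigma_\tau(e)$.

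Next I would substitute the inductive formula into each $\ell_\gamma(0,e-1-j)$ with $e-1-j\ge 1$. A generic summand $\ih^\sigma_\gamma(\pi')\cdot\ih^\gamma_\tau(j+1)$, with $\pi'\vdash(e-1-j)$ of length $m'$ along a flag $(\gamma,\ldots,\sigma)$, is by the very definition of $\ih_\Gamma(\pi)$ the contribution to $\ih^\sigma_\tau(\pi)$ of the extended flag $(\tau,\gamma,\ldots,\sigma)$ of length $m'+1$ with partition $\pi=[j+1,\pi'_1,\ldots,\pi'_{m'}]\vdash e$. As $(j,\gamma,\pi')$ varies over its full range, these prepend-extensions enumerate each (flag, partition)-pair from $\tau$ to $\sigma$ with partition of $e$ exactly once. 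Aggregating by total length $m=m'+1$ assembles $\sum_m \ih^\sigma_\tau(e,m)$, with the sign of each summand equal to $(-1)^j$ (from telescoping) times the inductive sign.

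The main obstacle will be the sign bookkeeping: one must track how $(-1)^j$ combines with the inductive sign and show the result matches $(-1)^{m+d_\sigma}$. The essential input is the parity constraint that $\ih^{\gamma'}_\gamma(\pi_i)=\dim\IH^{d_{\gamma'}-d_\gamma+\pi_i}_c(X_{\gamma'/\gamma})$ vanishes whenever $d_{\gamma'}-d_\gamma+\pi_i$ is odd (the second lemma of Subsection~\ref{subsec-basics}); summing these parities over a flag forces $d_\sigma-d_\tau+e$ to be even on the support of the right-hand side. Once the sign has been reconciled via this parity constraint (and verified against the base case), the combinatorial identification of flag-partition pairs closes the induction.
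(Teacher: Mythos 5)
Your proposal is sound in substance but takes a genuinely different route from the paper. The paper expands the recursion \eqref{eqn-L-recursion2} on the whole array $\{\ell_\tau(k,e)\}_{k,e}$ (one ``page'' per face) and argues indirectly: terms produced by the two branches of the recursion cannot formally cancel, the sign of a term is a function of the length of its partition (normalized at the all-ones partition), and a count of the $2^{e-1}$ compositions of $e$ shows that precisely the flag--partition pairs of weight $e$ occur in the entry $(0,e)$. You instead telescope the recursion along $k+e=\mathrm{const}$, reducing $\ell_\tau(0,e)$ to $\sum_{j}(-1)^j\sum_{\gamma\supseteq\tau}\ell_\gamma(0,e-1-j)\,\ih^\gamma_\tau(j+1)$ plus a boundary term $(-1)^{e-1}\ih^\sigma_\tau(e)$, and close an induction on $e$ by the prepend bijection $\bigl([j+1]\sqcup\pi',(\tau,\gamma,\ldots,\sigma)\bigr)\leftrightarrow(j,\gamma,\pi',\Gamma')$. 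This is tighter: it dispenses with the no-cancellation and counting arguments altogether, and it is legitimate either to stop the telescoping at $j=e-2$ and use Proposition \ref{prop:ICcomp}.\eqref{ICcomp-4} for $\ell_\tau(e-1,1)=\ih^\sigma_\tau(e)$, or, as you do, to extend one step using $\ell_\gamma(0,0)=\delta_{\gamma\sigma}$; the two are consistent.

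One caution about the step you single out as the main obstacle. If you carry the inductive sign in the form $(-1)^{m'+e'}$ (exponent $=$ length plus weight of the partition), the bookkeeping closes by itself: a term created at stage $j$ has sign $(-1)^{j}\cdot(-1)^{m'+e-1-j}=(-1)^{(m'+1)+e}$ and the boundary term has sign $(-1)^{1+e}$, so you get $\mu^\sigma_\tau(e)=\sum_m(-1)^{m+e}\ih^\sigma_\tau(e,m)$ with no appeal to parity at all. The parity lemma then only lets you rewrite the exponent as $m+d_\sigma-d_\tau$ (nonvanishing forces $e\equiv d_\sigma-d_\tau \bmod 2$), not as the printed $m+d_\sigma$: the residual factor $(-1)^{d_\tau}$ cannot be absorbed, so your plan of reconciling with the printed exponent via parity would fail for odd-dimensional $\tau$. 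In fact the printed exponent is inconsistent there with Proposition \ref{prop:ICcomp}.\eqref{ICcomp-4} and with the paper's $d=4$ tables (e.g.\ $\mu^\sigma_{\tau_1}(1)=\ih^4_c(X_{\sigma/\tau_1})\ge 0$, nonzero as soon as $\sigma/\tau_1$ is not simplicial, whereas $(-1)^{1+4}\ih^\sigma_{\tau_1}(1)$ is negative); note also that on the $\tau$-page the all-ones partition used for the paper's normalization has length $d_\sigma-d_\tau$, not $d_\sigma$. So prove the $(-1)^{m+d_\sigma-d_\tau}$ (equivalently $(-1)^{m+e}$) version; this is what the rest of the paper actually uses, since Theorem \ref{thm:weightonhQ} only needs the vertex case $d_\tau=0$ via $\mu^\sigma_\tau(e)=\mu^{\sigma/\tau}_{\tau/\tau}(e)$, where the two exponents agree.
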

\begin{proof}
  To start, note that, for $\gamma''\supseteq\gamma$ one has the ``product rule''
  \[
  \ih_\gamma^{\gamma''}(\pi''\sqcup \pi)=\sum_{\gamma''\supsetneq \gamma'\supsetneq
    \gamma} \ih^{\gamma''}_{\gamma'}(\pi'')\cdot
  \ih^{\gamma'}_\gamma(\pi)
  \]
  for any two partitions $\pi'' \partitions t''$ and $\pi \partitions t$ and their juxtaposition
  $\pi''\sqcup\pi =[\pi''_1,\ldots,\pi''_{m''},\pi_1,\ldots,\pi_{m}]\partitions (t''+ t)$.  

  For each $\tau$, place the numbers $\ell_\tau(k,e)$ on a grid of
  integer points in the first quadrant of a page associated to $\tau$
  as follows:

\begin{equation} \label{fig-terms1}
   ((\tau)):\qquad \begin{array}{c|cccc}
      \vdots&\vdots&\vdots&\\
      k=2&\ih^\sigma_\tau(3)=\ell_\tau(2,1)&\ell_\tau(2,2)&\ell_\tau(2,3)&\cdots\\
      k=1&\ih^\sigma_\tau(2)=\ell_\tau(1,1)&\ell_\tau(1,2)&\ell_\tau(1,3)&\cdots\\
      k=0&\ih^\sigma_\tau(1)=\ell_\tau(0,1)&\ell_\tau(0,2)&\ell_\tau(0,3)&\cdots\\\hline
      &e=1                           &e=2          &e=3          &\cdots
    \end{array}\qquad
\end{equation}

  The column $e=1$ of the $\tau$-page consists of the numbers
  $\dim_\QQ
  \IH_c^{d_\sigma-d_\tau+k+1}(X_{\sigma/\tau})=\ih^\sigma_\tau(k+1)=
  \ell_\tau(k,1)$.  Then \eqref{eqn-L-recursion2} implies that for
  $e>1$ the
  entry in row $k$ and column $e$ of page $((\tau))$ is the difference a)
  $-$ b) where
\begin{enumerate}
 \item[a)] is the sum over all $\sigma\supsetneq \gamma \supsetneq
   \tau$ of all products of $\ih^\gamma_\tau(k+1)$ with the entry in
   row $0$ and column $e-1$ on the $\gamma$-page;
 \item[b)] the entry in row $k+1$ and column $e-1$ on
  the $\tau$-page.
\end{enumerate}

Progressing along increasing column index, all entries on
each page can be rewritten as sums of products $\ih_\Gamma(\pi)$ of
intersection cohomology dimensions $\ih^{\gamma''}_{\gamma'}(t)$. We
call such product $\ih_\Gamma(\pi)$ a ``term''. It is immediate that
each term on page $((\tau))$ arises from a flag that links $\tau$ to $\sigma$
(\emph{i.e.}, $\tau=\gamma_0, \sigma=\gamma_{|\Gamma|}$) with
$|\Gamma|=|\pi|$.

 The sum in a) contains only terms $\ih_\Gamma(\pi)$ where the
  initial element of $\pi$ equals $k+1$. On the other hand, it
  follows from induction on $k$ that the terms in b) all have the
  initial element of the corresponding $\pi$ greater than $k+1$. So,
  formal cancellation of terms cannot occur in the recursion. 
  
   When a term on the $\tau$-page arises through case a) then the
   length of the term is greater (by one) than the length of the term
   on the $\gamma$-page that gave rise to it. However, that is not the
   case if it arises from case b) when it simply copied from the
   appropriate entry on the $\tau$ page, and so term length changes if
   and only if no new factor of $-1$ is acquired. In particular, the
   sign of a term is a function of the length of the term, modulo
   two. The recursion forces the term to the partition
   $[1,1,\ldots,1]$ of length $d_\sigma$ to be positive. Hence all
   terms $\ih_\Gamma(\pi)$ on each page carry a sign of
   $(-1)^{|\pi|+d_\sigma}$.
  
   Note that in a) one could allow $\gamma=\tau$ since
   $\ih_\tau^\tau(k+1)=0$. Similarly, one can admit $\gamma=\sigma$
   since $\ell_\sigma(0,e-1)=0$ for $e>1$.  The sum in a) involves
   always all possible choices of $\gamma$, $\sigma\supseteq \gamma
   \supseteq \tau$. 
   Thus, if a partition $\pi$ occurs at all in an entry on page $((\tau))$
   then $\ih_\Gamma(\pi)$ will occur in that entry for all flags
   $\Gamma$ with $|\Gamma|=|\pi|$ that start at $\tau$ and end with
   $\sigma$. In the following table we tabulate for small $k,e$ the
   partitions that occur in Figure \eqref{fig-terms1}; here, in each
   term one should sum over all $\Gamma$ of the appropriate length
   that interpolate from $\tau$ to $\sigma$ (we will write
   $\ih([1,1,2])$ instead of $\ih^\sigma_\tau([1,1,2])$ etc.\ for ease
   of readability).  \small
    \begin{equation}\label{table-ih}
    \begin{array}{c|cccc}
      \vdots&\vdots&\vdots&\\
      k=3&\ih([4])&\ih([1,4])-\ih([5])&\ih([1,1,4])-\ih([2,4])-(\ih([1,5])-
      \ih([6]))&\cdots\\
      k=2&\ih([3])&\ih([1,3])-\ih([4])&\ih([1,1,3])-\ih([2,3])-(\ih([1,4])-
      \ih([5]))&\cdots\\
      k=1&\ih([2])&\ih([1,2])-\ih([3])&\ih([1,1,2])-\ih([2,2])-(\ih([1,3])-
      \ih([4]))&\cdots\\
      k=0&\ih([1])&\ih([1,1])-\ih([2])&\ih([1,1,1])-\ih([2,1])-(\ih([1,2])-
      \ih([3]))&\cdots\\\hline
      &e=1 &e=2 &e=3 &\cdots
    \end{array}
    \end{equation}
  \normalsize 
  
   It is therefore sufficient to investigate which partitions
  occur in the $(k,e)$-entry on page $\tau$. Since the entries in
  column $e=1$ come from a unique partition, the entries in column $e$
  will come from no more than $2^{e-1}$ partitions (the variation over
  all $\gamma$ in the recursion does not affect the resulting
  partition $\pi$, only the flag $\Gamma$).  The argument that no
  cancellation can occur reveals also that no fewer than, and hence
  exactly, $2^{e-1}$ partitions occur in each entry of column $e$.
  
  The partitions $\pi$ used in the entry $(k,e)$ on page $\tau$ have weight
  $\pi_1+\ldots+\pi_{|\pi|}= e+k$, again by induction on the
  column index. But the number of ordered integer partitions  of weight $e$ with
  positive entries is exactly $2^{e-1}$. Thus, all $2^{e-1}$
  partitions of weight $e$
  actually occur in the entry $(0,e)$, and 
  \begin{itemize}
  \item for each partition, each possible flag interpolating from
    $\tau$ to $\sigma$ contributes, and no other;
  \item the term $\ih_\Gamma(\pi)$ has sign $(-1)^{|\pi|+d_\sigma}$;
  \end{itemize}
  as stated in the proposition.
\end{proof}
The recursion as evidenced in Table \eqref{table-ih} leads immediately
to the following result.
\begin{cor}
  The number of copies $\ell_\tau(k,e)$ of $\QQ_\tau$ in
  $\calH^ki^!_{\tau,\sigma}(\TXmap_*\QQ_\sigma/W_{d+e-1}\TXmap_*\QQ_\sigma)=
  \bigoplus_{\ell_\tau(k,e)}\QQ_\tau $ equals $\sum_m
  (-1)^{m+d_\sigma} \ih^\sigma_\tau(e,m)_\boldk$ where the subscript
  $\boldk$ means each partition $\pi=[\pi_1,\ldots,\pi_m]\partitions
  t$ that contributes to $\mu^\sigma_\tau(e)=\ell_\tau(0,e)$ in
  Proposition \ref{prop-chains} is replaced by
  $[\pi_1,\ldots,\pi_{m-1},\pi_m+k]\partitions (t+k)$.  \qed
\end{cor}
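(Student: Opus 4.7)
The plan is to give a proof by induction on $e$, running in parallel with the argument for Proposition~\ref{prop-chains}, and to use the recursion~\eqref{eqn-L-recursion2} as the only engine.

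For the base case $e=1$, Proposition~\ref{prop:ICcomp}\eqref{ICcomp-4} yields directly that $\ell_\tau(k,1) = \dim_\QQ \IH_c^{d_\sigma-d_\tau+k+1}(X_{\sigma/\tau}) = \ih^\sigma_\tau(k+1) = \ih^\sigma_\tau([k+1])$. The partition $[1]$ of weight $1$ and length $m=1$ is the unique contributor to $\mu^\sigma_\tau(1)$; augmenting its last (and only) entry by $k$ produces $[k+1]$, and the sign $(-1)^{1+d_\sigma}$ matches, so the formula holds at $e=1$ for every $k$.

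For the inductive step I would assume the statement of the corollary for all pairs $(k',e')$ with $e'<e$ and apply the recursion $\ell_\tau(k,e) = \bigl(\sum_{\gamma\supsetneq\tau}\ell_\gamma(0,e-1)\cdot\ih^\gamma_\tau(k+1)\bigr) - \ell_\tau(k+1,e-1)$. Insert the inductive expressions: $\ell_\gamma(0,e-1)$ equals $\sum_{m'}(-1)^{m'+d_\sigma}\ih^\sigma_\gamma(e-1,m')$ by Proposition~\ref{prop-chains}, while $\ell_\tau(k+1,e-1)$ equals $\sum_{m}(-1)^{m+d_\sigma}\ih^\sigma_\tau(e-1,m)_{\boldk+1}$ by the corollary's inductive hypothesis. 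Multiplying $\ell_\gamma(0,e-1)$ by the factor $\ih^\gamma_\tau(k+1)$ and summing over $\gamma$ produces signed $\ih^\sigma_\tau$-products indexed by flags from $\tau$ to $\sigma$ whose associated partition of weight $e+k$ has the designated new entry equal to exactly $k+1$; the subtracted term contributes partitions whose designated entry is at least $k+2$. Together these exhaust precisely the family of partitions obtained from $\mu^\sigma_\tau(e)$ by augmenting the designated entry by $k$, and with the correct sign $(-1)^{m+d_\sigma}$, since augmenting a single entry of a partition preserves its length $m$.

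The main obstacle is simply the bookkeeping: one must check that in the union of the two branches of the recursion no formal cancellation occurs, so that every augmented partition appears with its correct multiplicity and sign. This is, however, exactly the non-cancellation argument already carried out in the proof of Proposition~\ref{prop-chains} (the inductive argument following Table~\eqref{table-ih} shows that terms from branch (a) and branch (b) use disjoint partition shapes, distinguished by the value of the distinguished entry being $=k+1$ versus $\ge k+2$). Once this is in place, the inductive step closes and the corollary follows.
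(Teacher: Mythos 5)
Your proof is correct and is essentially the paper's own argument made explicit: the paper disposes of this corollary with the remark that it is immediate from the recursion as displayed in Table~\eqref{table-ih}, and your induction on $e$ (base case from Proposition~\ref{prop:ICcomp}\eqref{ICcomp-4}, inductive step by feeding Proposition~\ref{prop-chains} and the inductive hypothesis into \eqref{eqn-L-recursion2}, with branch a) giving the augmented partitions whose distinguished entry is exactly $k+1$, branch b) those with entry at least $k+2$, and the non-cancellation and length-preservation arguments from the proof of Proposition~\ref{prop-chains} supplying the multiplicities and signs) is precisely what that remark abbreviates. One caveat worth recording: the entry that gets augmented is the one carried by the factor $\ih^{\gamma}_{\tau}(k+1)$, i.e.\ the step of the flag leaving $\tau$, which in the ordering fixed in the Notation is $\pi_1$ rather than $\pi_m$; this matches the corollary's ``$\pi_m+k$'' only because Table~\eqref{table-ih} lists partitions with the $\tau$-adjacent step written last, so your convention-neutral ``designated entry'' is the correct reading of the statement.
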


\subsection{Dual polytopes}

Our final step in this section is to give a compact value to the
formula in Proposition \ref{prop-chains}.  In order to carry out this
discussion we have to introduce some
notions from toric geometry.

\begin{ntn}
    Let $\tau \subseteq \gamma \subseteq \sigma$ be faces of
    $\sigma$. The \emph{quotient face} of $\gamma$ by $\tau$ is defined
    as:
\begin{eqnarray}\label{***}
\gamma /\tau &:=& (\gamma + \tau_\mbr) / \tau_\mbr \subseteq \mbr^d / \tau_\mbr.
\end{eqnarray}

We define the dual cone and the annihilator of $\gamma$ by 
\[
\gamma^\vee := \{ y \in (\mbr^d)^\dual \mid y(x) \geq 0 \; \forall x
\in \gamma \}\qquad \text{and} \qquad \gamma^\perp := \{ y \in (\mbr^d)^\dual \mid y(x) = 0 \;
\forall x \in \gamma \}.
\]

For faces $\tau$ and $\gamma$ of $\sigma$, 
$[ \tau \subseteq \gamma \subseteq \sigma] \; \Leftrightarrow \;
[\tau^\vee \supseteq \gamma^\vee \supseteq \sigma^\vee] $ and $ [\tau
\subseteq \gamma \subseteq \sigma] \; \Leftrightarrow \; [\tau^\perp \supseteq
\gamma^\perp \supseteq \sigma^\perp] $.

There is an containment-reversing bijection 
\[
\tau \quad \longleftrightarrow \quad \tau^\star :=\tau^\perp \cap
\sigma^\vee
\]
between faces $\tau$ of
$\sigma$ of dimension $r$ and \emph{complementary faces} $\tau^\star$ of
$\sigma^\vee$ of dimension $d-r$.

The notions of dual and annihilator as well as
complementary face are relative to $\sigma$, although we usually suppress it
in the notation.
\schluss\end{ntn} 

\begin{rem}
  We record two properties of $\mu$ that will be used later.
  \begin{enumerate}
    \item The numbers $\mu^\sigma_\tau(e)$ are relative in the sense
      that they only depend on the quotient variety $X_{\sigma/\tau}$:
      Proposition \ref{prop-chains} shows that
      $\mu^\sigma_\tau(e)=\mu^{\sigma/\tau}_{\tau/\tau}(e)$.
    \item  We derive a second recursive formula. Indeed,
  as an alternating sum of weight $e$ over all flags interpolating
  from $0$ to $\sigma$, sorting the terms by their first non-trivial flag
  entry $\gamma$, one obtains
  \begin{gather}\label{eqn-recursion-alt}
    \mu^\sigma_0(e)=(-1)^{d_\sigma+1}\ih^\sigma_0(e)+\sum_{0\subsetneq
      \gamma\subsetneq
      \sigma}\left((-1)^{d_\gamma-1}\sum_k\mu^\sigma_\gamma(e-k)\cdot
    \ih^\gamma_0(k)\right).
  \end{gather}
  Here, the first summand corresponds to $\pi=[e]$, the sum collects all
  others.  Moreover, the additional power of $-1$ in all terms in the
  sum is owed to the fact that all partitions contributing to
  $\mu^\sigma_\gamma(e-k)$ are one step shorter than their avatars,
  the partitions of $e$.
  \end{enumerate}
\schluss\end{rem}

Define
\begin{align*}
\gamma^\mho &:= \{y \in (\mbr^d)^\dual/\gamma^\perp \mid y(x) \geq 0\;
\forall x \in \gamma\}.
\end{align*}
Since $(\gamma_\mbr)^\dual \simeq (\mbr^d)^\dual / \gamma^\perp$
naturally, $\gamma^\mho$ is the dual of $\gamma$ in its own span,
hence absolute (independent of $\sigma$).

We have the following basic lemma on the dual of the cone $\gamma
/\tau$ relative to $\gamma_\mbr / \tau_\mbr$. 

\begin{lem}
Let $\tau \subseteq \gamma$ be faces of $\sigma$. Then
\[
\left(\gamma /\tau\right)^\mho \simeq \tau^\star/ \gamma^\star,
\]
the right hand side computed relative to $\sigma$.
\end{lem}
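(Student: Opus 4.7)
The plan is to realize both sides as cones inside the common ambient space $\tau^\perp/\gamma^\perp$ and check that their defining inequalities match.

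First I identify the ambient dual: dualizing the inclusion of subspaces $\gamma_\mbr/\tau_\mbr\hookrightarrow\mbr^d/\tau_\mbr$ produces a surjection $\tau^\perp=(\mbr^d/\tau_\mbr)^\dual\twoheadrightarrow(\gamma_\mbr/\tau_\mbr)^\dual$ whose kernel is exactly $\gamma^\perp$, hence a canonical isomorphism $(\gamma_\mbr/\tau_\mbr)^\dual\simeq\tau^\perp/\gamma^\perp$. Under this identification,
\[
(\gamma/\tau)^\mho = \{y+\gamma^\perp : y\in\tau^\perp,\ y|_\gamma\geq 0\}.
\]
On the other side, the complementary-face correspondence is dimension-reversing, so $\dim\gamma^\star=d-d_\gamma=\dim\gamma^\perp$; together with $\gamma^\star\subseteq\gamma^\perp$ this forces $\gamma^\star_\mbr=\gamma^\perp$. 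Since $\tau\subseteq\gamma$ yields $\gamma^\perp\subseteq\tau^\perp$, the cone-quotient definition applied on the dual side reads
\[
\tau^\star/\gamma^\star = (\tau^\star+\gamma^\perp)/\gamma^\perp \subseteq \tau^\perp/\gamma^\perp.
\]

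Granted these identifications, the lemma reduces to the equality inside $\tau^\perp$
\[
\tau^\star+\gamma^\perp = \{y\in\tau^\perp : y|_\gamma\geq 0\}.
\]
The inclusion $\subseteq$ is immediate: $\tau^\star\subseteq\sigma^\vee$ is non-negative on $\gamma\subseteq\sigma$, and $\gamma^\perp$ vanishes on $\gamma$. The reverse inclusion is the crux and the main technical obstacle; it asks that any $y\in\tau^\perp$ with $y|_\gamma\geq 0$ can be shifted by an element of $\gamma^\perp$ into $\sigma^\vee$.

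I would handle the reverse inclusion by a standard supporting-functional argument. Since every face of a polyhedral cone is exposed, there exists $h\in\sigma^\vee$ with $\gamma=\sigma\cap h^{-1}(0)$; in particular $h\in\gamma^\perp$ and $h>0$ on $\sigma\smallsetminus\gamma$. Pick $g\in\Int(\sigma^\vee)$ and set $K:=\sigma\cap g^{-1}(1)$, a compact polytopal base of $\sigma$. On the vertices of $K$ lying in $\gamma$ we have $h(v)=0$ and $y(v)\geq 0$ by hypothesis; on the remaining (finitely many) vertices $h(v)>0$, so for $N$ exceeding $\max\{-y(v)/h(v):v\text{ vertex of }K,\ v\notin\gamma\}$ the affine functional $y+Nh$ is non-negative at every vertex of $K$, hence on all of $K$, hence on $\sigma$ by conic homogeneity. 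Thus $y+Nh\in\sigma^\vee\cap\tau^\perp=\tau^\star$ while $Nh\in\gamma^\perp$, exhibiting $y$ as an element of $\tau^\star+\gamma^\perp$. Passing to the quotient by $\gamma^\perp$ in the displayed equality then yields the claimed isomorphism $(\gamma/\tau)^\mho\simeq\tau^\star/\gamma^\star$.
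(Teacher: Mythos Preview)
Your proof is correct and follows the same overall strategy as the paper: both identify the ambient space as $\tau^\perp/\gamma^\perp$ and reduce to the equality $\tau^\star+\gamma^\perp=\gamma^\vee\cap\tau^\perp$. The only difference is in how the nontrivial inclusion is justified: the paper invokes the standard identity $\gamma^\vee=\sigma^\vee+\gamma^\perp$ for a face $\gamma$ of $\sigma$ (citing Fulton) and then intersects with $\tau^\perp$, followed by the second isomorphism theorem to reach $\tau^\star/\gamma^\star$; you instead give a self-contained supporting-hyperplane argument (choosing $h$ exposing $\gamma$ and passing to a compact base of $\sigma$), which is essentially the proof of that Fulton identity specialized to the situation. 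Your version is more elementary and avoids the external reference, while the paper's is shorter once the cited fact is granted.
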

\begin{proof}
We have
$(\RR^d/\tau_\RR)^\star/(\gamma/\tau)^\perp_{\sigma/\tau}=\tau^\perp_\sigma/\gamma^\perp_\sigma$,
computing on the left relative to $\sigma/\tau$ and on the right
relative to $\sigma$. 
 We have thus:
\begin{align*}
\left(\gamma /\tau\right)^\mho &= \{ y \in (\mbr^d/\tau_\RR)^\star
/(\gamma/\tau)^\perp = \tau^\perp / \gamma^\perp \mid y(x) \geq 0 \;\; \forall
\; x \in \gamma / \tau\} \\ &= (\gamma^\vee  \cap
\tau^\perp) / \gamma^\perp \\ &= ((\sigma^\vee + \gamma^\perp)\cap\tau^\perp) /
\gamma^\perp \\ &= (\sigma^\vee
\cap \tau^\perp + \gamma^\perp) / \gamma^\perp \\ &\simeq
(\sigma^\vee\cap\tau^\perp)/(\sigma^\vee\cap\tau^\perp\cap\gamma^\perp)\\ &=
\tau^\star /
\gamma^\star
\end{align*}
where the third equality follows from $\gamma^\vee = \sigma^\vee +
\gamma^\perp$ (cf. the proof of Proposition 2 on \cite[p.13]{Fulton}) and at
the end we use the second 
isomorphism theorem.
\end{proof}

\begin{defn}\label{defn:Y}
  If $\tau\subseteq \gamma$ are faces of $\sigma$, denote $Y_{\gamma/\tau}$ the
  spectrum of
  the semigroup ring
  induced by the dual cone of $\sigma/\tau$ in its natural
  lattice. In other words, the cone $\gamma / \tau$ together with its
  faces defines a fan in $\gamma_\mbr / \tau_\mbr$. The corresponding
  toric variety is
\begin{equation*}
Y_{\gamma / \tau} := X_{\tau^\star / \gamma^\star} = X_{(\gamma /
  \tau)^\mho}.
\end{equation*}

\schluss\end{defn}
The following lemma compares the intersection cohomology Betti numbers
of $Y_{\sigma /\gamma}$ with those of $X_{\gamma / 0} =X_\gamma$.

\begin{lem}\label{lem:formulaStanley}
Let $\sigma$ be a strongly convex rational polyhedral cone of
dimension $d$ as always. Then
\[
\sum_{0 \subseteq \gamma \subseteq \sigma} (-1)^{d_\gamma}
\left(\sum_i \ih^{2i}(Y_{\sigma / \gamma})\, t^i \right)\left(\sum_j
\ih^{2j}(X_{ \gamma})\, t^j \right) = 0.
\]
\end{lem}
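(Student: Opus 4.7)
The plan is to reduce the identity to a classical Stanley-type relation among $g$-polynomials of polytopes, exploiting the combinatorial description of intersection cohomology of affine toric varieties at the torus-fixed point. First, for a pointed rational polyhedral cone $\gamma$ of dimension $d_\gamma$, Fieseler's formula (\cite{Fies}) identifies the Poincar\'e polynomial $\sum_j \ih^{2j}(X_\gamma)\,t^j$ with Stanley's $g$-polynomial $g(P_\gamma, t)$ of a transversal polytope $P_\gamma$, i.e.\ the intersection of $\gamma$ with a generic affine hyperplane not through the apex. Since by Definition \ref{defn:Y} the variety $Y_{\sigma/\gamma} = X_{(\sigma/\gamma)^\mho}$ is the affine toric variety associated to the dual cone $(\sigma/\gamma)^\mho$, and the transversal polytope of the dual cone is combinatorially the polar dual of that of the original cone, the second factor becomes $\sum_i \ih^{2i}(Y_{\sigma/\gamma})\,t^i = g(P^{\,*}_{\sigma/\gamma}, t)$, where $P^{*}_{\sigma/\gamma}$ denotes the combinatorial polar dual of a transversal polytope of $\sigma/\gamma$.

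Second, I would fix a transversal polytope $P = P_\sigma$ of $\sigma$ and use the order-preserving bijection between the face lattice of $\sigma$ and the augmented face lattice $\{\emptyset\}\cup \mathcal{F}(P)\cup\{P\}$ of $P$, sending $\gamma \mapsto F_\gamma$ with $F_0 = \emptyset$, $F_\sigma = P$, and $\dim F_\gamma = d_\gamma - 1$ otherwise. Under this bijection, the transversal polytope of $\gamma$ identifies with $F_\gamma$, while the transversal polytope of $\sigma/\gamma$ is the face figure $P/F_\gamma$, so the assertion of the lemma becomes
\[
\sum_{F}\, (-1)^{\dim F + 1}\, g(F,t)\, g\bigl((P/F)^{*}, t\bigr) \;=\; 0,
\]
summed over $F$ in the augmented face lattice of $P$. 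This is a classical Stanley reciprocity identity for $g$-polynomials: it drops out of the Barthel--Brasselet--Fieseler--Kaup/Karu combinatorial framework for intersection cohomology of toric varieties, in which Poincar\'e duality of the minimal extension sheaf on the fan of $\sigma$ manifests as precisely such an alternating sum. Equivalently, one picks a complete projective fan $\Sigma$ containing $\sigma$, uses palindromicity (hard Lefschetz) of the $h$-polynomial of $X_\Sigma$, and M\"obius-inverts over the face lattice of $P$.

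The algebraic identity in the last step is classical, and the poset translation is largely bookkeeping; I expect the real work to lie in the polar-duality compatibility asserted in the first step. One must check carefully that taking a transversal section of $(\sigma/\gamma)^\mho$ is compatible with polar duality of transversal polytopes in a way that preserves the $g$-polynomial under the identification, and that the sign $(-1)^{d_\gamma} = (-1)^{\dim F_\gamma + 1}$ matches on both sides. The boundary cases $\gamma = 0$ (giving $Y_\sigma = X_{\sigma^\vee}$) and $\gamma = \sigma$ (giving $Y_0 = \mathrm{pt}$) must be handled via the standard conventions $g(\emptyset, t) = g(\{\mathrm{pt}\}, t) = 1$; a direct calculation for $\sigma$ simplicial (where every factor equals $1$ and the identity reduces to $(1-1)^{d}=0$) provides a useful consistency check.
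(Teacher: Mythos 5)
Your overall strategy is the same as the paper's: convert both factors into Stanley $g$-polynomials via the Denef--Loeser/Fieseler combinatorial description of toric intersection cohomology, transfer the sum to the (augmented) face lattice of a transversal polytope $P$ of $\sigma$, and invoke a Stanley convolution identity. However, your dictionary attaches the polar dual to the wrong factor, and in the paper's convention $X_\rho=\Spec(\CC[\rho\cap\text{lattice}])$ this is a genuine error rather than a harmless choice of convention. The correct identifications (the paper's \eqref{eq:giIH}, \eqref{eq:formgi1}, \eqref{eq:formgi2}) are $\sum_j\ih^{2j}(X_\gamma)t^j=g(F_\gamma^{\circ},t)$, the $g$-polynomial of the \emph{polar dual} of the transversal polytope $F_\gamma$ of $\gamma$ (equivalently, of a transversal polytope of $\gamma^\vee$), and $\sum_i\ih^{2i}(Y_{\sigma/\gamma})t^i=g(P/F_\gamma,t)$ with \emph{no} dual. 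Your claims are dualized on both counts. Concretely, for $\gamma=\Cone(\text{$3$-cube})$ the variety $X_\gamma$ is the affine cone over $(\PP^1)^3$ in its $\mathcal{O}(1,1,1)$-embedding, so $\sum_j\ih^{2j}(X_\gamma)t^j=1+2t=g(\text{octahedron},t)$, whereas $g$ of the transversal polytope itself is $g(\text{cube},t)=1+4t$. Consequently the lemma reduces not to the identity you state, $\sum_F(-1)^{\dim F+1}g(F)\,g((P/F)^{\circ})=0$, but to Stanley's identity $\sum_F(-1)^{\dim F}g(F^{\circ})\,g(P/F)=0$, which is exactly what the paper quotes (Stanley, in the presentation of Braden--MacPherson) and plugs in directly.

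Two mitigating remarks. First, the variant identity you wrote down is in fact also true: Stanley's identity, applied to every interval of the face lattice, says that $(x,y)\mapsto(-1)^{\rho(x,y)}g([x,y]^{\circ})$ is a left inverse of $(x,y)\mapsto g([x,y])$ in the incidence algebra, hence a two-sided inverse, which gives the opposite-order convolution $\sum_F(-1)^{\dim F}g(F)\,g((P/F)^{\circ})=0$ as well. So your route is repairable once the dictionary is corrected (each version of the identity must be paired with the matching dictionary); but as written your reduction of the lemma rests on two false identifications, and your simplicial consistency check cannot detect the mix-up since all $g$-polynomials there equal $1$. Second, your justification of the combinatorial identity (``drops out of the Barthel--Brasselet--Fieseler--Kaup/Karu framework'', ``palindromicity plus M\"obius inversion'') is too vague to replace the precise citation, and you also need the lattice-independence of $\IH^{\bullet}(X_\sigma)$ (the paper's \eqref{eq:IHdiffL}) before the purely combinatorial substitutions $\ih\leftrightarrow g$ are legitimate for the semigroups actually occurring; the paper devotes the first part of its proof to exactly this point.
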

\begin{proof}

To a cone $\sigma \subseteq \RR A= \mbr^d$ belongs the affine toric variety
$X_\sigma=\Spec \mbc[ \sigma \cap \mbz^d]$. Here is an overview of the
proof. We first explain independence of $\ih^\bullet(-)$ of the lattice
used to produce $X_\sigma$. We then discuss combinatorial intersection
homology and how it applies to quotient polytopes and cones. Finally,
we put the pieces together, using results of Stanley.

Now let $ N \subseteq
\mbr^d$ be another $\mbz$-lattice (a free subgroup of rank $d$ whose
$\QQ$-span is $\QQ A$). The affine toric variety $X^N_\sigma := \Spec
\mbc[\sigma \cap N]$ can be different from $X_\sigma$, but we have
a canonical isomorphism
\begin{equation}\label{eq:IHdiffL}
\IH^\bullet(X_\sigma) \simeq \IH^\bullet(X^N_\sigma).
\end{equation}
This can be seen as follows: Consider the lattices $N'\supseteq N$ in
$\mbr^d$. It is enough to prove
that $\IH^\bullet(X^{N'}_\sigma) \simeq \IH^\bullet(X^N_\sigma)$. The finite group
$G:= N' / N$ naturally acts on $X^{N'}_\sigma$, and $X^N_{\sigma}$ is
the quotient of $X^{N'}_\sigma$ under this action
(cf. \cite[Proposition 1.3.18]{CLS}). We have the following
isomorphism
\[
\IH^\bullet(X^{N}_\sigma) \simeq \IH^\bullet(X^{N'}_\sigma)^G = \IH^\bullet(X^{N'}_\sigma)
\]
where $\IH^\bullet(X^{N'}_\sigma)^G$ is the $G$-invariant part. The
isomorphism follows from \cite[Lemma 2.12]{Ki} and the equality comes
from the fact that the action of $G$ is induced by the action of the
open dense $\CC$-torus of $X^{N'}_\sigma$ which acts trivially: a
$\CC$-torus acting continuously on a rational vector space must have a
dense subset acting trivially; continuity forces triviality
everywhere. Hence when writing $\IH^\bullet(X_\sigma)$ we do not need to
worry about the lattice with respect to which $X_\sigma$ is defined.

Assume that we are given a rational polytope $P\subseteq \mbr^{d-1}$
of dimension $d-1$. The set of faces of $P$ (including the empty face
$\emptyset$), ordered by inclusion, forms a poset. Given such a
polytope, Stanley \cite{Stanley} defined polynomials
\begin{align}
g(P) = \sum g_i(P) t^i \qquad \text{and} \qquad h(P) = \sum h_i(P) t^i
\end{align}
recursively by
\begin{itemize}
\item $g(\emptyset) = 1$;
\item $h(P) = \sum_{\emptyset \leq F < P} (t-1)^{\dim P - \dim F -1} g(F)$;
\item $g_0(P)= h_0(P)$, \qquad $g_i(P) = h_i(P) - h_{i-1}(P)$ for $0 <
  i \leq \dim P/2$ and $g_i(P) = 0$ for all other $i$.
\end{itemize}

Now assume $0$ is in the interior $\Int(P)$. From such a polytope we get a fan
$\Sigma_P$ by taking the cones over the faces of $P$; here the empty
face corresponds to the cone $\{0\} \subseteq \mbr^{d-1}$. This gives a
projective toric variety $X_P$ together with an embedding into
projective space. It was proved independently by Denef
and Loeser \cite{DF1} and Fieseler \cite{Fies} that
\[
h_i(P) = \ih^{2i}(X_P).
\]
Denote by $\cone(X_P)$ the affine cone of $X_P$. Then
\[
g_i(P) = h_i(P) - h_{i-1}(P) = \ih^{2i}(\cone(X_P)) \qquad \text{for}
\; 0< i \leq \dim (P)/2.
\]
The affine cone of $X_P$ has the following toric description: Consider
the embedding of $P\subseteq \mbr^{d-1}$ in $\mbr^d$ under the map $i:
x \mapsto (1,x)$. Let $\Cone(P)$ be the (rational, polyhedral, strongly
convex) cone over $i(P)$ with apex at the origin. Then
$\cone(X_P)$ is an affine toric variety given by $\cone(X_P)=X_{\Cone(P)^\vee} =
\Spec \mbc[\Cone(P)^\mho \cap (\mbz^d)^\dual]$. Hence we get
\begin{equation}\label{eq:giIH}
g_i(P) = \ih^{2i}(X_{\Cone(P)^\mho})\, .
\end{equation}

Two polytopes $P_1$ and $P_2$ are \emph{combinatorially equivalent} if
they have isomorphic face posets, denoted $P_1 \sim P_2$. This is an
equivalence relation, and $g(P)$ and $h(P)$ only depend on the
equivalence class $[P]$ of $P$. Similarly, given two strongly convex
rational polyhedral cones $\sigma_1$ and $\sigma_2$ we write $\sigma_1
\sim \sigma_2$ if their face posets are isomorphic. If we have
$\sigma_i = \Cone(P_i)$ for $i=1,2$ then $[P_1 \sim
  P_2]\Leftrightarrow [\Cone(P_1) \sim \Cone(P_2)]$.

For a given rational polytope $P$ with $0 \in \Int(P)$, the
dual polytope is
\[
P^\circ := \{ x\in (\RR P)^\dual \mid x(y) \geq -1 \; \forall y \in P\},
\]
$\RR P$ being the affine span of $P$.  There is an order-reversing
bijection of the $k$-dimensional faces $F$ of $P$ and the
$(\dim(P)-1-k)$-dimensional faces $\{x\in P^\circ\mid x(F)=-1\}$ of
$P^\circ$.

If the origin is not in $\Int(P)$, translate $P$ so that
$0\in\Int(P)$ and then dualize.  The combinatorial equivalence
class of the dual is then well-defined and we still
write $P^\circ$ for this class.

From a $k$-dimensional face $F$ of the $(d-1)$-dimensional polytope
$P$ we construct an equivalence class of $(d-k-2)$-dimensional
polytopes $P/F$ as follows. Choose a $(d-k-2)$-dimensional affine
subspace $L$ whose intersection with $P$ is a single point of the
interior of $F$. Then a representative of $P/F$ is given by $L'\cap P$
where $L'$ is another $(d-k-2)$-dimensional affine subspace, near $L$
in the appropriate Grassmannians, and such that it meets an interior
point of $P$.  (One checks that this representative is well-defined up
to projective transformation, hence the combinatorial type is
well-defined).  One can see easily that the cone over $P/F$ is exactly
$\Cone(P)/\Cone(F)$, compare \eqref{***}:
\begin{equation}\label{eq:PmodF}
\Cone(P/F) \sim \Cone(P)/\Cone(F)=(\Cone(P)+\RR F)/\RR F.
\end{equation}

We will prove Lemma \ref{lem:formulaStanley} using the following
formula by Stanley \cite{Stanley2} (we use here a presentation given
by Braden and MacPherson in \cite[Proposition 8, formula (3)]{BraMac}):
\begin{equation}\label{eq:formulaStan}
\sum_{\emptyset \subseteq F \subseteq P} (-1)^{\dim F} g(F^\circ)g(P/F) = 0
\end{equation}

The dual $F^\circ$ of a rational polytope $F$ is rational in many
lattices. Choosing one such lattice yields a rational, polyhedral, strongly
convex cone $\Cone(F^\circ)$ for which $\Cone(F^\circ)^\mho$ is
well-defined. By \eqref{eq:IHdiffL},  its intersection homology is
independent of the lattice choice.
It follows that, with $\gamma$ the cone over $F$,
\begin{equation}\label{eq:formgi1}
g_i(F^\circ) = \ih^{2i}(X_{\Cone(F^\circ)^\mho}) \simeq \ih^{2i}(X_{\Cone(F)})
\simeq \ih^{2i}(X_\gamma)
\end{equation}
where we used formula \eqref{eq:IHdiffL} for the last
isomorphism. Recalling Definition \ref{defn:Y} and that
$\Cone(P)=\sigma$, we obtain
\begin{equation}\label{eq:formgi2}
g_i(P/F) = \ih^{2i}(X_{\Cone(P/F)^\mho}) = \ih^{2i}(X_{(\Cone(P) /
  \Cone(F))^\mho}) = \ih^{2i}(Y_{\Cone(P) / \Cone(F)}) =
\ih^{2i}(Y_{\sigma / \gamma}),
\end{equation}
where the first equality is \eqref{eq:giIH}, the second equality
follows from \eqref{eq:PmodF}, the third equality is Definition \ref{defn:Y},
and the last follows from \eqref{eq:IHdiffL}.  Plugging
\eqref{eq:formgi1} and \eqref{eq:formgi2} into \eqref{eq:formulaStan}
and multiplying with $(-1)$ we get the statement of the Lemma.
\end{proof}

We are now ready to give our main result about the weight filtration
on the inverse Fourier--Laplace transform of the $A$-hypergeometric system $H_A(0)$:

\begin{thm}\label{thm:weightonhQ}
The associated graded
module to the  weight filtration  on the mixed
Hodge module $\TVmap_*({^p}\mbq^H_T)$ is for $e=0,\ldots,d$ given by 
\[
\gr^W_{d+e} \TXmap_*({^p}\mbq^H_T) \simeq \bigoplus_\tau\IC_{X_\tau}({^p}
\mcl_{(\tau,d+e)}), 
\]
where
\[
  {^p} \mcl_{(\tau,d+e)} = L^0_{(\tau,d+e)} \otimes
  {^p}\mbq^H_{T_\tau}
  \]is a constant variation of Hodge structures of
weight $d+e$ on $T_\tau$. Here $L^0_{(\tau,d+e)}$ is a Hodge-structure of
Hodge--Tate type of weight $d+e-d_\tau$ of dimension
  \[
\mu^\sigma_\tau(e)=\dim_\mbq L^0_{(\tau,d+e)} =
\ih^{d_\sigma-d_\tau+e}_c(Y_{\sigma/\tau}),
  \]
compare Definition \ref{defn:Y}.
\end{thm}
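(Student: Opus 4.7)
The plan is to assemble, via an induction on $d_\sigma$, the structural results of Proposition~\ref{prop:ICcomp}, the explicit combinatorial formula of Proposition~\ref{prop-chains}, and Stanley's duality identity from Lemma~\ref{lem:formulaStanley}. Most of the theorem---namely that $\gr^W_{d+e}\TXmap_*({}^p\QQ^H_T)$ is a direct sum of $\IC_{X_\tau}$'s with constant local systems of weight $d+e$ and multiplicity $\mu^\sigma_\tau(e)=\dim_\QQ L^0_{(\tau,d+e)}$---is already established there (constancy of the local system follows from the remark on equivariance after Proposition~\ref{prop:ICcomp}). That $L^0_{(\tau,d+e)}$ is of Hodge--Tate type and of weight $d+e-d_\tau$ will follow by induction on $e$ from the recursion~\eqref{eqn-L-recursion}, using that each $\IH_c^\bullet(X_{\gamma/\tau})$ is of Hodge--Tate type and that pure Hodge structures form a semisimple category, so Hodge--Tateness is preserved by the quotient construction; the $-d_\tau$ shift merely strips off the weight contribution of ${}^p\QQ^H_{T_\tau}$.

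It then remains to prove the dimension identity $\mu^\sigma_\tau(e)=\ih_c^{d_\sigma-d_\tau+e}(Y_{\sigma/\tau})$. I would first reduce to the case $\tau=0$ by invoking the relative identity $\mu^\sigma_\tau(e)=\mu^{\sigma/\tau}_0(e)$ together with the intrinsicness of $Y_{\sigma/\tau}$, and then use Poincar\'e--Verdier duality on the $d_\sigma$-dimensional affine toric variety $Y_\sigma$ to rewrite the target as $\ih^{d_\sigma-e}(Y_\sigma)$. Observe that this automatically enforces the parity constraint $d_\sigma+e\equiv 0\pmod{2}$ forced by the vanishing of odd intersection Betti numbers of toric varieties, consistent with the Hodge--Tate character already established.

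The heart of the argument is an induction on $d_\sigma$ comparing two recursions for the common value. Starting from the alternative recursion~\eqref{eqn-recursion-alt}
\[
\mu^\sigma_0(e)=(-1)^{d_\sigma+1}\ih^\sigma_0(e)+\sum_{0\subsetneq\gamma\subsetneq\sigma}(-1)^{d_\gamma-1}\sum_k \mu^\sigma_\gamma(e-k)\,\ih^\gamma_0(k),
\]
one substitutes the inductive hypothesis for $\mu^\sigma_\gamma(e-k)=\mu^{\sigma/\gamma}_0(e-k)$, applies Verdier duality to each $\ih_c$, and reindexes the inner sum by $a=d_\sigma-d_\gamma-(e-k)$, $b=d_\gamma-k$ with $a+b=d_\sigma-e$, producing
\[
\mu^\sigma_0(e)=(-1)^{d_\sigma+1}\ih^{d_\sigma-e}(X_\sigma)+\sum_{0\subsetneq\gamma\subsetneq\sigma}(-1)^{d_\gamma-1}\sum_{a+b=d_\sigma-e}\ih^{a}(Y_{\sigma/\gamma})\,\ih^{b}(X_\gamma).
\]
On the other hand, extracting the coefficient of $t^{(d_\sigma-e)/2}$ from the identity of Lemma~\ref{lem:formulaStanley} and peeling off the boundary contributions $\gamma=0$ (where $X_0$ is a point, so only $b=0$ survives and the term is $\ih^{d_\sigma-e}(Y_\sigma)$) and $\gamma=\sigma$ (where $Y_{\sigma/\sigma}$ is a point, so only $a=0$ survives and the term is $(-1)^{d_\sigma}\ih^{d_\sigma-e}(X_\sigma)$) yields precisely the same expression for $\ih^{d_\sigma-e}(Y_\sigma)$. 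The base case $d_\sigma=0$ is trivial, so the induction closes.

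I expect the main obstacle to be sign and index bookkeeping: the two sums are packaged differently---Proposition~\ref{prop-chains} by flags of faces and Stanley by single faces---and the Tate twists introduced by Verdier duality and by the recursion~\eqref{eqn-L-recursion} must be tracked consistently. Once the reindexing between the combinatorial flag sum and Stanley's local sum is set up correctly, the signs $(-1)^{d_\gamma-1}$ in \eqref{eqn-recursion-alt} coincide with the $(-1)^{d_\gamma+1}$ appearing after rearranging Stanley's identity, and the induction runs without further friction.
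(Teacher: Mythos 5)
Your proposal is correct and follows essentially the same route as the paper: after Proposition \ref{prop:ICcomp} and Proposition \ref{prop-chains} reduce the theorem to the dimension identity, you compare the recursion \eqref{eqn-recursion-alt} with Stanley's identity from Lemma \ref{lem:formulaStanley} by induction, using Poincar\'e duality and the vanishing of odd intersection Betti numbers to handle the reindexing, and isolating the $\gamma=0$ and $\gamma=\sigma$ boundary terms. The only (cosmetic) difference is that you first reduce to $\tau=0$ via $\mu^\sigma_\tau(e)=\mu^{\sigma/\tau}_0(e)$ and induct on $d_\sigma$, whereas the paper inducts directly on $d_\sigma-d_\tau$ and substitutes the inductive hypothesis into Stanley's identity rather than into the recursion; the bookkeeping is identical.
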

\begin{proof}
In light of Proposition \ref{prop-chains} (and Kashiwara equivalence)
it only remains to prove that $\mu^\sigma_\tau(e)=\dim_\mbq
L^0_{(\tau,d+e)}$ equals
$\ih^{d_\sigma-d_\gamma+e}_c(Y_{\sigma/\tau})$.

  An inspection shows that if $\sigma=\tau$ then
  the theorem is (trivially) correct. We argue by induction on
  $d_\sigma-d_\tau$. While in principle a Poincar\'e series only
  involves non-negative terms there is no harm in allowing negative
  indices: they just add zero terms.
  
  According to Lemma \ref{lem:formulaStanley} we have
\makeatletter
\tagsleft@false

  \begin{align*}
    0&=\sum_{0\subseteq \gamma\subseteq \sigma}(-1)^{d_\gamma}\left(
    \sum_{j=-\infty}^\infty t^j\cdot\ih^{2j}(Y_{\sigma
      /\gamma})\right)\cdot \left( \sum_{i=-\infty}^\infty
    t^i\cdot\ih^{2i}(X_\gamma)\right)\\
    &=\sum_{0\subseteq \gamma\subseteq \sigma}(-1)^{d_\gamma}\left(
    \sum_{j=-\infty}^\infty t^j\cdot
    \ih^{2(d_\sigma-d_\gamma-j)}_c(Y_{\sigma/\gamma})\right)\cdot
    \left( \sum_{i=-\infty}^\infty t^i\cdot
    \ih^{2(d_\gamma-i)}_c(X_\gamma)\right)\tag{Lemma \ref{lem:propIHtoric}}\\
    &=(-1)^{d_\sigma}\cdot\sum_{k=-\infty}^\infty t^k\cdot
    \ih^{2(d_\sigma-k)}_c(X_\sigma)\tag{from
      $\gamma=\sigma$}\\
    &\phantom{==}+\sum_{0\subsetneq \gamma\subsetneq \sigma}(-1)^{d_\gamma}\left(
    \sum_{j=-\infty}^\infty
    t^j\cdot\ih^{2(d_\sigma-d_\gamma-j)}_c(Y_{\sigma
      /\gamma})\right)\cdot \left( \sum_{j=-\infty}^\infty
    t^i\cdot\ih^{2(d_\gamma-i)}_c(X_\gamma)\right)\phantom{xxxxxxxxxx}\tag{general $\gamma$}\\
    &\phantom{==}+\sum_{k=-\infty}^\infty t^k\cdot
    \ih^{2(d_\sigma-k)}_c(Y_{\sigma /0}). \tag{from
      $\gamma=0$}
  \end{align*}
where we have used Lemma \ref{lem:propIHtoric} (1) for the second equality.

  Induction allows to substitute
  $\mu^\sigma_\gamma(d_\sigma-d_\gamma-2j)$ for
  $\ih^{2(d_\sigma-d_\gamma-j)}_c(Y_{\sigma/\gamma})$ for all
  $\gamma\not=0,\sigma$ in the sum ``general $\gamma$''.  At the same
  time we can replace, by definition,
  $\ih^{2(d_\gamma-i)}_c(X_\gamma)$ by $\ih^\gamma_0(d_\gamma-2i)$.
  With these substitutions, collect terms with equal $t$-power:

  \begin{align*}
    0&=(-1)^{d_\sigma}\cdot\sum_{k=-\infty}^\infty t^k\cdot
    \ih^\sigma_0(d_\sigma-2k)\tag{from $\gamma=\sigma$}\\
    &\phantom{==}+\sum_{k=-\infty}^\infty \left(\sum_{i+j=k}t^k\sum_{0\subsetneq
      \gamma\subsetneq \sigma} (-1)^{d_\gamma}\left( 
    \mu^\sigma_\gamma(d_\sigma-d_\gamma-2j)\cdot
    \ih^\gamma_0(d_\gamma-2i)\right)\right)\phantom{xxxx}\tag{general $\gamma$}\\ 
    &\phantom{==}+\sum_{k=-\infty}^\infty t^k\cdot \ih^{2(d_\sigma-k)}_c(Y_{\sigma/0})
    \tag{from $\gamma=0$}.
  \end{align*}
\tagsleft@true
In degree $k$ we have therefore:
  \begin{gather}\label{eqn-mu-Y}
   0= (-1)^{d_\sigma}\cdot
   \ih^\sigma_0(d_\sigma-2k)+\sum_{i+j=k}\left(\sum_{0\subsetneq
      \gamma\subsetneq \sigma} (-1)^{d_\gamma}
    \mu^\sigma_\gamma(d_\sigma-d_\gamma-2j)\cdot
    \ih^\gamma_0(d_\gamma-2i)\right)
    + \ih^{2(d_\sigma-k)}_c(Y_{\sigma/0}).
  \end{gather}
Since the odd-dimensional intersection homology Betti numbers are zero (cf. Lemma \ref{lem:propIHtoric} (3)),
we can include all missing summands $(-1)^{d_\gamma}
\mu^\sigma_\gamma(d_\sigma-d_\gamma-j')\cdot
\ih^\gamma_0(d_\gamma-i')$ with $i'+j'=2k$ without affecting the value
of the sum. Since
$\ih^\gamma_0(d_\gamma-i')=\ih_c^{d_\gamma+(d_\gamma-i')}(X_\gamma)$ , no summand
with $i'':=d_\gamma-i'\le 0$ can contribute  (cf. Lemma \ref{lem:propIHtoric} (2)).
We can therefore rewrite \eqref{eqn-mu-Y} to
  \begin{gather}\label{eqn-mu-Y2}
   0= (-1)^{d_\sigma}\cdot
   \ih^\sigma_0(d_\sigma-2k)+\sum_{i''}\left(\sum_{0\subsetneq
      \gamma\subsetneq \sigma} (-1)^{d_\gamma}
    \mu^\sigma_\gamma(d_\sigma-2k-i'')\cdot
    \ih^\gamma_0(i'')\right)
    + \ih^{2(d_\sigma-k)}_c(Y_{\sigma/0}).
  \end{gather}

  In light of the
  recursion \eqref{eqn-recursion-alt}, this yields
  $0=-\mu^\sigma_0(d_\sigma-2k)+\ih^{2(d_\sigma-k)}_c(Y_{\sigma/0})$ and
  finishes the inductive step.
\end{proof}

\section{Weight filtrations on $A$-hypergeometric systems}

In this section we translate the results from the previous section to
hypergeometric $D$-modules on
\[
V:=\CC^n
\]
via the Fourier transform. Part of this is rather
mechanical, but identifying the weight filtrations requires some
extra hypotheses, see Corollary \ref{cor:monodromic}.

\subsection{Translation of the filtration}

We start this section with various definitions around 
$A$-hypergeometric systems. For more details, we refer to (for
example) \cite{MMW05,RSW18}. Our terminology is that of \cite{MMW05}.

Throughout, we continue Notation \ref{ntn-A}
\begin{defn}
Write $\mbl_A$ for the $\mbz$-module of integer relations among the
columns of $A$ and write $\mcd_{\mbc^n}$ for the sheaf of rings of
differential operators on $V=\mbc^n$ with coordinates $x_1,\ldots,
x_n$. Denote $\del_j$ the operator $\del/\del x_j$.  For $\beta=(\beta_1,\ldots,\beta_d)\in\CC^d$ define
\[
\mcm^\beta_A := \mcd_{\mbc^n} / \mci_A^\beta
\]
where $\mci_A^\beta$ is the sheaf of left ideals generated by the
\emph{toric operators}
\[
\Box_{\boldu} := \prod_{u_j < 0} 
\p_{j}^{-u_j} - \prod_{u_j > 0} \p_{j}^{u_j}
\]
for all $\boldu=(u_1,\ldots, u_n) \in \mbl_A$, and the \emph{Euler
  operators}
\[
E_i:= \sum_{ j=1}^n a_{ij} x_j \p_{j} - \beta_i.
\]
\schluss\end{defn}
We will write $M^\beta_A := \Gamma(V, \mcm^\beta_A)$ for the
$D_{A}$-module of global sections where
$D_{A}=\Gamma(V,\calD_{V})$. Denote by $R_A$
(resp. $O_A$) the polynomials rings over $\mbc$ generated by $\p_A =
\{\p_j\}_j$ (resp. $x_A = \{x_j\}_j$).  and set $S_A := R_A /
R_A\{\Box_{\boldu}\}_{\boldu \in \mbl_A}$.

We have 
\begin{align}
x^{\boldu} E_i - E_i x^{\mathbf{u}} &= -(A \cdot \boldu)_i x^{\boldu}, \notag  \\
\p^{\boldu} E_i - E_i \p^{\mathbf{u}} &= (A \cdot \boldu)_i
\p^{\boldu}. \notag
\end{align}

Define the $A$-degree on $R_A$ and $D_A$ as 
\[
\deg_A(x_j) = \mathbf{a}_j = -\deg_A(\p_j)\in\ZZ A
\]
and denote by $\deg_{A,i}(-)$ the degree associated to the $i$-th row
of $A$. This convention agrees with the choices in
  \cite{MMW05} but is opposite to that in \cite{Reich2}. Then $E_i P = P(E_i + \deg_{A,i}(P))$ for any $A$-graded $P \in
D_A$.

Given a left $A$-graded $D_A$-module $M$ we can define commuting
$D_A$-linear endomorphisms $E_i$ via
\[
E_i \circ m := (E_i - \deg_{A,i}(m))\cdot m
\]
for $A$-graded elements of $M$.
If $N$ is an $A$-graded $R_A$-module $N$ we get a commuting set of
$D_A$-linear endomorphisms on the left $D_A$-module $D_A \otimes_{R_A}
N$ by
\[
E_i \circ ( P \otimes Q) := (E_i - \deg_i(P) - \deg_i(Q)) P \otimes Q \, 
\]
for any $A$-graded $P,Q$.  The \emph{Euler--Koszul complex}
$K_\bullet(M;E-\beta)$ of the $A$-graded $R_A$-module $N$ is the homological
Koszul complex induced by $E-\beta := \{(E_i -\beta_i)\circ\}_i$ on
$D_A \otimes_{R_A} N$. The terminal module sits in homological degree
zero. We denote by $\mck_\bullet(N; E-\beta)$ the corresponding
complex of quasi-coherent sheaves. The homology objects are
$H_\bullet(N;E-\beta)$ and $\mch_\bullet(N;E-\beta)$, respectively.

For a finitely generated $A$-graded $R_A$-module $N = \bigoplus_\alpha
N_\alpha$ write $\deg_A(N) = \{ \alpha \in \mbz A \mid N_\alpha \neq
0\}$ and then let the \emph{quasi-degrees} of $N$ be
\begin{eqnarray*}
\qdeg_A(N)&:=& \overline{\deg_A(N)}^{Zar},
\end{eqnarray*}
the Zariski closure of $\deg_A(N)$ in $\mbc^d$.

The following subset of parameters $\beta \in \mbc^d$ will be of
importance to us.
\begin{defn}[\cite{SchulzeWalther-ekdi}]
The set of \emph{strongly resonant parameters} of $A$ is
\[
\sRes(A) := \bigcup_{j=1}^d \sRes_j(A)
\]
where
\[
\sRes_j(A) := \left\{\beta \in \mbc^d \mid \beta \in -
(\mbn+1)\mathbf{a}_j - \qdeg(S_A / (\p_j))\right\}.
\]
\schluss\end{defn}

\begin{defn}
Let
\[
\langle - , - \rangle : \overbrace{\mbc^n}^{\hatV} \times \overbrace{\mbc^n}^{V} \ra \mbc, \qquad
(\fraky_1,\ldots ,\fraky_n,\frakx_1,\ldots, \frakx_n,) \mapsto
\sum_{i=1}^n \frakx_i \fraky_i \,.
\]
We define a $\mcd_{\hatV \times V}$-module by
\[
\mcl := \mco_{\hatV\times V}\cdot  \exp({\langle -, - \rangle}),
\]
and we refer to \cite[Section 5]{KS97} for details on these
sheaves. Denote by $p_i: \hatV \times V \ra \mbc^n$ for $i=1,2$
the projection to the first and second factor respectively
(identifying the respective factor with the target). The
\emph{Fourier--Laplace} transform is defined by
\begin{eqnarray}
\FL\colon \rmD^b_{qc}(\mcd_{\hatV}) &\lra& \rmD^b_{qc}(\mcd_{V}) \notag, \\
 \mcm &\mapsto& p_{2+}(p_1^+ \mcm \overset{L}\otimes \mcl)[-n] \notag
\end{eqnarray}
with $\FL\circ\FL=-\id$.
\schluss\end{defn}
We denote by $\hat M^\beta_A$ the module of global sections
to the sheaf
\[
\hat{\mcm}^\beta_A := \FL^{-1}(\mcm^\beta_A)
\]
and define the following twisted structure sheaves on $T$:
\[
\mco_T^\beta := \mcd_T / \mcd_T \cdot (\p_t t_1 + \beta_1,\ldots, \p_{t_d} t_d+\beta_d),
\]
where we note that $\mco^\beta_T \simeq \mco^\gamma_T$ if and only if $\beta -
\gamma \in \mbz^d$.

\begin{thm}(\cite{SchulzeWalther-ekdi} Theorem 3.6, Corollary
  3.7)\label{thm-SWekdi} Let $A$ be a pointed $(d \times n)$ integer
  matrix satisfying $\mbz A = \mbz^d$. Then for the map $\TVmap$ in
  \eqref{eq:h-map}, the following statements
  are equivalent
\begin{enumerate}
\item $\beta \not \in \sRes(A)$;
\item $\hat{\mcm}^\beta_A \simeq \TVmap_{ +} \mco_T^\beta$.\qed
\end{enumerate}
\end{thm}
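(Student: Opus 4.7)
The plan is to construct a natural comparison morphism $\hat{\mcm}^\beta_A \to \TVmap_+ \mco_T^\beta$, recognize both sides as Euler--Koszul-type quotients, and then use vanishing theorems for Euler--Koszul homology to pin down exactly when this map is an isomorphism. I would begin by factoring $\TVmap$ as $T \xrightarrow{\TXmap} X \xrightarrow{\XVmap} V$ with $\TXmap$ an open embedding into the affine toric $X = \Spec(S_A)$ and $\XVmap$ a closed embedding; by Kashiwara equivalence $\TVmap_+ \mco_T^\beta = \XVmap_+ \TXmap_+ \mco_T^\beta$, so the target is the $D_A$-module on $V$ whose underlying $A$-graded space is (essentially) $\CC[\ZZ A]$, twisted by $\beta$, with Fourier-dual Euler operators acting by the graded shift. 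The module $\hat{\mcm}^\beta_A$ is by direct computation $D_A \otimes_{R_A} S_A$ modulo the Euler relations $(E-\beta)$, and there is a canonical surjection from this onto $\TVmap_+ \mco_T^\beta$ given by the inclusion $S_A \hookrightarrow \CC[\ZZ A]$; this gives the comparison map.

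The next step is to measure the kernel and cokernel of this map using the Euler--Koszul complex $K_\bullet(\cdot; E-\beta)$. The localizations $S_A[\partial_j^{-1}]$ together with the maps $S_A \to S_A[\partial_j^{-1}]$ fit into short exact sequences $0 \to S_A \to S_A[\partial_j^{-1}] \to Q_j \to 0$ of $A$-graded $R_A$-modules, and the $D_A$-module $\TVmap_+ \mco_T^\beta$ is recovered as the Euler--Koszul $H_0$ of the full localization $S_A[\partial_1^{-1},\ldots,\partial_n^{-1}] = \CC[\ZZ A]$. The long exact sequences in Euler--Koszul homology then reduce the question to whether $H_i(Q_j; E-\beta) = 0$ for all $i$ and $j$. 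Here I would invoke the fundamental characterization of the vanishing of Euler--Koszul homology for toric modules (due to Matusevich--Miller--Walther): $H_\bullet(N; E-\beta) = 0$ iff $-\beta \notin \qdeg_A(N)$.

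The remaining, and central, task is to identify $\qdeg_A(Q_j)$ with the set appearing in the definition of $\sRes_j(A)$. Writing $Q_j = \varinjlim_k \, \partial_j^{-k} S_A / S_A$ and using that multiplication by $\partial_j$ has $A$-degree $-\bolda_j$, one sees that the $A$-degrees of $Q_j$ are exactly $-(k+1)\bolda_j + \deg_A(S_A/(\partial_j))$ for $k \in \NN$; taking the Zariski closure yields $\qdeg_A(Q_j) = -(\NN + 1)\bolda_j - \qdeg_A(S_A/(\partial_j))$, which is precisely $-\sRes_j(A)$. Thus the vanishing of all $H_\bullet(Q_j; E-\beta)$ is equivalent to $\beta \notin \sRes(A)$, and in that case the comparison morphism is an isomorphism; conversely, if $\beta \in \sRes_j(A)$ for some $j$, one sees a non-trivial kernel or cokernel localized away from the hyperplane $\{y_j = 0\}$, obstructing the isomorphism.

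The main obstacle will be the careful bookkeeping of $A$-graded shifts under the Fourier--Laplace correspondence (where $x_j \leftrightarrow -\partial_{y_j}$ swaps the sign of $A$-degree), which is exactly what produces the asymmetric shape $-(\NN + 1)\bolda_j - \qdeg_A(S_A/(\partial_j))$ rather than a symmetric condition. Once this sign bookkeeping is fixed, the argument is essentially a homological comparison between the Euler--Koszul complex on $S_A$ and on its torus-localization.
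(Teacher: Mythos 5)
The paper itself gives no proof of this statement: it is quoted verbatim from Schulze--Walther \cite{SchulzeWalther-ekdi}, and your outline is essentially a reconstruction of the argument given there --- compare $\hat{\mcm}^\beta_A=H_0(S_A;E-\beta)$ with the Euler--Koszul $H_0$ of the torus localization $\CC[\ZZ A]$, control the difference via $0\to S_A\to S_A[\del_j^{-1}]\to Q_j\to 0$ and the long exact sequence, and translate the vanishing of $H_\bullet(Q_j;E-\beta)$ into the quasidegree condition defining $\sRes_j(A)$. So the strategy is the correct one and matches the source. A few points need repair before it is a proof. First, the comparison map $\hat{\mcm}^\beta_A\to\TVmap_+\mco_T^\beta$ is not a surjection in general (already for $d=n=1$, $A=(1)$ and negative integral $\beta$ it has nonzero cokernel $H_0(\CC[\ZZ A]/S_A;E-\beta)$); this is harmless since your long exact sequence tracks the cokernel anyway, but the claim should be dropped. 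Second, your sign bookkeeping is internally inconsistent: with the paper's convention $\deg_A(\del_j)=-\bolda_j$ one gets $\qdeg_A(Q_j)=(\NN+1)\bolda_j+\qdeg_A(S_A/(\del_j))=-\sRes_j(A)$, whereas the displayed formula $-(\NN+1)\bolda_j-\qdeg_A(S_A/(\del_j))$ is the set $\sRes_j(A)$ itself; as written, two sign slips cancel to give the right conclusion, so this is fixable bookkeeping (as you anticipate) but must be done in one consistent convention, including in the statement of the Matusevich--Miller--Walther vanishing criterion. Third, two steps are asserted where the real work lies in the cited paper: the identification of $\TVmap_+\mco_T^\beta$ with the Euler--Koszul $H_0$ of $\CC[\ZZ A]$ (together with vanishing of its higher Euler--Koszul homology), and the passage from the one-variable localizations $S_A[\del_j^{-1}]$ to the full localization, which needs that inverting $\del_j$ is exact and commutes with Euler--Koszul homology (or an iterated/\v{C}ech-type argument); moreover $Q_j$ is not a finitely generated toric module, so the vanishing criterion must be applied through a filtered-colimit argument. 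All of these are carried out in \cite{SchulzeWalther-ekdi}, so your proposal is a faithful, if somewhat telescoped, version of the original route rather than a new one.
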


Theorem \ref{thm-SWekdi} implies that for
$\beta \in \mbz^d \smallsetminus \sRes(A)$ we have, with notation as
in \eqref{map-factorization},
\[
\hat{\mcm}^\beta_A \simeq \TVmap_{+} \mco_T \simeq \XVmap_+ \TXmap_+ \mco_T.
\]

We now concentrate on $\beta\in\ZZ^d\minus \sRes(A)$. Since $\calO_T$ is the underlying left
$\mcd_T$-module of ${^p}\mbq^H_T$ this induces the structure of a
mixed Hodge module on $\hat{\mcm}_A^\beta$ from Theorem
\ref{thm:weightonhQ}.
Recalling Definition
\ref{def-Dmod} and bearing in mind that
the functor $\XVmap_*$ preserves weight, we infer: 
\begin{cor}\label{cor-Mhat}
For $\beta \in \mbz^d \smallsetminus \sRes (A)$, the
module $\hat{\mcm}^\beta_A=\FL^{-1}(\mcm^\beta_A)$ carries the
structure of a mixed Hodge module ${^H\!\!}\hat{\mcm}^\beta_A$ which
is induced by the isomorphism
\[
\hat{\mcm}^\beta_A \simeq \Dmod (\XVmap_* \TXmap_* {^p}\mbq^H_T).
\]
The corresponding weight filtration is given by
\[
\gr^W_{d+e} {^H\!\!}\hat{\mcm}^\beta_A \simeq \bigoplus_\gamma
\bar \XVmap_{\gamma*}\IC_{X_\gamma}({^p} \mcl_{(\gamma,d+e)}) 
\]
where $\bar \XVmap_\gamma: X_\gamma \ra \mbc^n$ is the embedding of the
closure of the $\gamma$-torus, and ${^p} \mcl_{(\gamma,d+e)}
= L^0_{(\gamma,d+e)} \otimes {^p}\mbq^H_{T_\gamma}$ is a constant
variation of Hodge structures of weight $d+e$. Here
$L^0_{(\gamma,d+e)}$ is a Hodge-structure of Hodge--Tate type of
weight $d+e-d_\gamma$ of dimension
  \[
\dim_\mbq L^0_{(\gamma,d+e)} =
\ih^{d_\sigma-d_\gamma+e}_c(Y_{\sigma/\gamma}),
\]
with $Y_{\sigma/\gamma}$ as in Definition \ref{defn:Y}.\qed
\end{cor}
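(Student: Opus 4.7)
The plan is to combine Theorem \ref{thm-SWekdi}, which identifies the D-module $\hat{\mcm}^\beta_A$ with $\TVmap_{+}\mco_T^\beta$ when $\beta \notin \sRes(A)$, with Theorem \ref{thm:weightonhQ}, which gives the weight filtration on $\TXmap_* {}^p\mbq^H_T$, via the factorization $\TVmap = \XVmap \circ \TXmap$ from \eqref{map-factrization}. The integrality hypothesis $\beta \in \ZZ^d$ enters only so that $\mco_T^\beta \simeq \mco_T$ as $\mcd_T$-modules (the twist being trivializable by a monomial coordinate change), which in turn is the $\mcd$-module underlying the pure Hodge module ${}^p\mbq^H_T$. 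Thus Theorem \ref{thm-SWekdi} provides an isomorphism of $\mcd_V$-modules
\[
\hat{\mcm}^\beta_A \simeq \TVmap_{+}\mco_T = \XVmap_{+}\TXmap_{+}\mco_T = \Dmod(\XVmap_* \TXmap_* {}^p\mbq^H_T),
\]
and we install the mixed Hodge module structure on $\hat{\mcm}^\beta_A$ by transport through this isomorphism.

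Next I would verify that applying $\XVmap_*$ to the filtration produced by Theorem \ref{thm:weightonhQ} produces the asserted formula. Since $\XVmap \colon X \hookrightarrow \hatV = \CC^n$ is a closed immersion, Kashiwara equivalence identifies $\MHM(X)$ with the full subcategory of $\MHM(\hatV)$ of modules supported on $X$; under this identification $\XVmap_*=\XVmap_!$ is exact, fully faithful, and preserves weights. Consequently
\[
\gr^W_{d+e}\bigl(\XVmap_* \TXmap_* {}^p\mbq^H_T\bigr) \simeq \XVmap_*\gr^W_{d+e}\TXmap_* {}^p\mbq^H_T \simeq \bigoplus_\gamma \bar\XVmap_{\gamma*}\IC_{X_\gamma}({}^p\mcl_{(\gamma,d+e)})
\]
for every $e$, where each $\bar\XVmap_{\gamma*}\IC_{X_\gamma}(-)$ is the image under Kashiwara equivalence of the intersection complex produced on $X$ by Theorem \ref{thm:weightonhQ}. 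The multiplicities and weight structure of the constant variations ${}^p\mcl_{(\gamma,d+e)} = L^0_{(\gamma,d+e)} \otimes {}^p\mbq^H_{T_\gamma}$ are then read directly from that theorem, and in particular
\[
\dim_\QQ L^0_{(\gamma,d+e)} = \mu^\sigma_\gamma(e) = \ih^{d_\sigma - d_\gamma + e}_c(Y_{\sigma/\gamma}).
\]

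There is no genuine obstacle here: the whole content of the corollary is a translation of Theorem \ref{thm:weightonhQ} across (a) the Kashiwara equivalence along the closed embedding $\XVmap$ and (b) the non-MHM D-module isomorphism of Theorem \ref{thm-SWekdi}. The only delicate point to flag is conceptual rather than technical: the Fourier--Laplace transform $\FL$ is not asserted to lift to mixed Hodge modules in general, so the MHM structure on $\hat{\mcm}^\beta_A$ is \emph{defined} via the isomorphism above, not deduced by pushing an MHM structure through $\FL^{-1}$. That this definition is sensible is exactly the reason the hypothesis $\beta \in \ZZ^d \minus \sRes(A)$ is imposed, and the compatibility with the monodromic Fourier--Laplace transform (which does preserve MHM under a projectivity assumption on $I_A$) is discussed separately in the subsequent subsection and therefore not needed for the statement of the corollary.
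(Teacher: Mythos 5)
Your proposal is correct and follows essentially the same route as the paper: identify $\hat{\mcm}^\beta_A$ with $\TVmap_+\mco_T=\XVmap_+\TXmap_+\mco_T$ via Theorem \ref{thm-SWekdi} (using $\mco_T^\beta\simeq\mco_T$ for integral $\beta$), transport the MHM structure through this isomorphism, and read off the weight graded pieces from Theorem \ref{thm:weightonhQ} using that $\XVmap_*$ is a weight-preserving Kashiwara-equivalence along the closed embedding. Your remark that the MHM structure is \emph{defined} by this isomorphism rather than obtained through $\FL$ is exactly the paper's intent, so no further comment is needed.
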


%
As a corollary, we obtain
information about the holonomic length of 
$M^0_A$. Recall that $\mcm^{\IC}(X_\gamma)=\Dmod(\IC_{X_\gamma})$ is the
unique simple $T$-equivariant $\mcd$-Module on $\hatV$ with support  $X_\gamma$.
\begin{cor}\label{cor:hatW}
Let $A$ be as in Notation \ref{ntn-A} and choose $\beta \in \mbz^d
\smallsetminus \sRes (A)$. Then $\mcm^\beta_A$ carries a finite
separated exhaustive filtration
$\{\hat{W}_\bullet \mcm^\beta_A\}_{e=0}^d$ given by
\[
\hat{W}_\bullet \mcm^\beta_A:=\FL(W_\bullet {}^H\!\!\hat\mcm^\beta_A).
\]
This filtration satisfies
\[
\gr^{\hat{W}}_{d+e} \mcm^\beta_A = \bigoplus_\gamma \bigoplus_{i=1}^{\mu^\sigma_\gamma(e)} C_{\gamma}.
\]
Here, $C_{\gamma}=\FL
\mcm^{\IC}(X_\gamma)$
 is a simple equivariant holonomic $\mcd$-module
(that is independent of $e$ and) which occurs in
$\gr^{\hat{W}}_{d+e} \mcm^\beta_A$ with multiplicity
$\mu^\sigma_\gamma(e) = \ih_c^{d_\sigma - d_\gamma
  +e}(Y_{\sigma/\gamma}) = \ih^{d_\sigma - d_\gamma
  -e}(Y_{\sigma/\gamma})$,
the  $(d_\sigma - d_\gamma -e)$-th intersection cohomology
Betti number of the affine toric variety $Y_{\sigma / \gamma}$. \qed
\end{cor}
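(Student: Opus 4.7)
The plan is to transport the weight filtration on ${}^H\!\!\hat{\mathcal{M}}^\beta_A$ from Corollary \ref{cor-Mhat} through the Fourier--Laplace transform and then read off the graded pieces. Define $\hat{W}_\bullet \mathcal{M}^\beta_A := \FL(W_\bullet {}^H\!\!\hat{\mathcal{M}}^\beta_A)$. Since $\FL\colon \rmD^b_{qc}(\mathcal{D}_{\hat V}) \to \rmD^b_{qc}(\mathcal{D}_V)$ is an equivalence of categories preserving (regular) holonomicity, exactness, and direct sums, the properties of being finite, separated, and exhaustive transfer immediately from the weight filtration. The range $e \in \{0, 1, \ldots, d\}$ comes directly from Proposition \ref{prop:ICcomp}\eqref{ICcomp-1}.

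Next, I would apply $\FL$ to the decomposition
\begin{equation*}
\gr^W_{d+e} {}^H\!\!\hat{\mathcal{M}}^\beta_A \simeq \bigoplus_\gamma \bar{\XVmap}_{\gamma*}\IC_{X_\gamma}({}^p\mathcal{L}_{(\gamma,d+e)})
\end{equation*}
from Corollary \ref{cor-Mhat}. Forgetting the Hodge structure, ${}^p\mathcal{L}_{(\gamma,d+e)}$ has underlying $\mathcal{D}$-module a trivial $\mathcal{O}_{T_\gamma}$-local system of rank $\mu^\sigma_\gamma(e) = \dim_\mathbb{Q} L^0_{(\gamma,d+e)}$; consequently $\Dmod(\bar{\XVmap}_{\gamma*}\IC_{X_\gamma}({}^p\mathcal{L}_{(\gamma,d+e)}))$ is a direct sum of $\mu^\sigma_\gamma(e)$ copies of the unique simple equivariant $\mathcal{D}$-module $\mathcal{M}^{IC}(X_\gamma)$. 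Applying $\FL$---which is additive and sends simple objects to simple objects since it is a categorical equivalence---produces the stated decomposition $\bigoplus_\gamma \bigoplus_{i=1}^{\mu^\sigma_\gamma(e)} C_{\gamma,e}$, where $C_{\gamma,e} = \FL\,\mathcal{M}^{IC}(X_\gamma)$ is simple and equivariant (equivariance being preserved by $\FL$ up to the dual torus action).

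Finally, the multiplicity formula $\mu^\sigma_\gamma(e) = \ih^{d_\sigma - d_\gamma + e}_c(Y_{\sigma/\gamma})$ is exactly the content of Theorem \ref{thm:weightonhQ}. The equivalent form $\ih^{d_\sigma - d_\gamma - e}(Y_{\sigma/\gamma})$ follows from Poincar\'e duality on the $(d_\sigma - d_\gamma)$-dimensional normal affine toric variety $Y_{\sigma/\gamma}$: with $k = d_\sigma - d_\gamma + e$,
\begin{equation*}
\ih^k_c(Y_{\sigma/\gamma}) = \ih^{2(d_\sigma - d_\gamma) - k}(Y_{\sigma/\gamma}) = \ih^{d_\sigma - d_\gamma - e}(Y_{\sigma/\gamma}).
\end{equation*}
There is essentially no obstacle here beyond bookkeeping: the genuine mathematical content---the multiplicity computation and the splitting of the graded pieces into intersection complexes---was already carried out in Theorem \ref{thm:weightonhQ} and Proposition \ref{prop:ICcomp}; what remains is the formal transfer through $\FL$ and a single application of Poincar\'e duality.
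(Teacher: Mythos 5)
Your proposal is correct and follows essentially the same route the paper takes: the corollary is stated there with no separate argument precisely because it is the image under $\FL$ of the weight filtration in Corollary \ref{cor-Mhat}, using that $\FL$ is an exact equivalence preserving holonomicity, simplicity and direct sums, with the multiplicities supplied by Theorem \ref{thm:weightonhQ} and the last equality by the Poincar\'e duality statement $\IH^{D+k}_c \simeq (\IH^{D-k})^\dual$ for the $(d_\sigma-d_\gamma)$-dimensional affine toric variety $Y_{\sigma/\gamma}$. Your filling-in of these bookkeeping steps (constancy of the coefficient local systems, hence splitting into $\mu^\sigma_\gamma(e)$ copies of $\mcm^{\IC}(X_\gamma)$, and equivariance of the Fourier transform) is accurate and complete.
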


\subsection{The homogeneous case: monodromic Fourier--Laplace}

Although the Fourier--Laplace transformation does not preserve regular
holonomicity in general, and so $\mcm^\beta_A$ may not be a mixed Hodge module, it is preserved for the derived category of complexes of $\mcd$-modules with so-called \emph{monodromic cohomology}. In this case we can express the Fourier--Laplace transformation as a  monodromic Fourier transformation (or Fourier--Sato transformation). In order to make this work, we now assume that the matrix $A$ is \emph{homogeneous}, which means that
\[
(1,\ldots, 1)^T\in \ZZ(A^T).
\]
Via a suitable coordinate change on the torus $T$, we can then assume that the top row of $A$ is $(1,\ldots,1)$.

\medskip

Denote by
\[
\theta: \mbc^\ast \times \hatV \ra \hatV
\]
the standard $\mbc^\ast$ action on $\hat V$; let $z$ be a coordinate on
$\mbc^\ast$. We refer to the push-forward $\theta_*(z \partial_z)$ as
the \emph{Euler vector field} $\mathfrak{E}$.

\begin{defn}\cite{Brylinski}
A regular holonomic $\mcd_{\hatV}$-module $\mcm$ is called \emph{monodromic}, if the Euler field $\mathfrak{E}$ acts finitely on the global sections of $\mcm$: for each global section section $v$ of $\mcm$ the set $\{\mathfrak{E}^n(v)\}_{n\in\NN}$ should generate a finite-dimensional vector space.
We denote by $D^b_{mon}(\mcd_{\hatV})$ the derived category of bounded complexes of $\mcd_{V'}$-modules with regular holonomic and monodromic cohomology. 
\schluss\end{defn}

Since we assume that $A$ has $(1,\ldots,1)$  as its top row, each $\hat{\mcm}^\beta_A$ is monodromic.

\begin{thm}\cite{Brylinski}
\begin{enumerate}
\item $\FL$ preserves complexes with monodromic cohomology.
\item In $D^b_{mon}(\mcd_{V})$ and $D^b_{mon}(\mcd_{\hatV})$ we
  have
\[
\FL \circ \FL \simeq \id \quad \text{and} \quad \mbd \circ \FL \simeq \FL \circ \mbd \, .
\]
\item $\FL$ is $t$-exact with respect to the natural $t$-structures on $D^b_{mon}(\mcd_{V'})$ resp. $D^b_{mon}(\mcd_{V})$.
\end{enumerate}
\end{thm}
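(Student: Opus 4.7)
The plan is to reduce all three claims to well-established properties of the Fourier--Sato transform $\FL^{\mathrm{S}}$ on $\mbc^\ast$-conic complexes, which is an equivalence of triangulated categories and is $t$-exact with respect to the perverse $t$-structure. The structural ingredient making this reduction possible is that a regular holonomic $\mcd_{\hatV}$-module is monodromic precisely when it decomposes, locally, into generalized eigenspaces of $\mathfrak{E}$; after an appropriate twist each such eigenspace becomes genuinely $\mbc^\ast$-equivariant, which lets one compare $\FL$ with $\FL^{\mathrm{S}}$ one eigenspace at a time.

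Claim (1) I would treat by direct computation. Because the kernel $\mcl=\exp(-\langle-,-\rangle)$ is annihilated by each $x_i\partial_{x_i}+y_i\partial_{y_i}+1$ on $\hatV\times V$, summing over $i$ yields that $\mathfrak{E}_{\hatV}+\mathfrak{E}_V+n$ kills $\mcl$. Inside $p_{2+}(p_1^+\mcm\otimes\mcl)$ this translates finite action of $\mathfrak{E}_{\hatV}$ on $\mcm$ into finite action of $\mathfrak{E}_V$ on $\FL(\mcm)$ (after shift by $-n$), so $\FL$ preserves monodromicity, and by the symmetric construction so does its inverse candidate.

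For (2), the involutivity $\FL\circ\FL\simeq\Id$ comes from analyzing the composite kernel on $\hatV\times\hatV$: base change and the projection formula identify it with the direct image of $\exp(-\langle y,x\rangle-\langle x,y'\rangle)$ under $p_{13}$; a stationary-phase computation (equivalently, integrating out $x$ to obtain a $\delta$-type distribution on the antidiagonal $y=-y'$) gives the identity up to a cohomological shift that is absorbed precisely because one is working on the monodromic subcategory. Compatibility with duality, $\mbd\circ\FL\simeq\FL\circ\mbd$, follows from base change together with the self-duality of $\mcl$ up to sign, the sign flip being harmless in the monodromic setting. For (3), $t$-exactness follows by transporting the known $t$-exactness of $\FL^{\mathrm{S}}$ on $\mbc^\ast$-monodromic perverse sheaves through the eigenspace decomposition; alternatively, one can check directly that if $\mcm$ is a single monodromic regular holonomic module then $p_{2+}(p_1^+\mcm\otimes\mcl)$ has cohomology concentrated in degree zero, since on each generalized $\mathfrak{E}$-eigenspace the higher direct images of the oscillating integrand vanish by a conicity argument.

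The main obstacle in this plan is the comparison step, namely identifying the algebraic $\FL$ (defined via the $D$-module kernel $\mcl$) with the topological Fourier--Sato transform on the monodromic subcategory, up to an appropriate cohomological shift. This is essentially a microlocal/analytic assertion; although classical, setting it up rigorously at the level of derived categories requires either an equivariant descent argument (decomposing objects by generalized $\mathfrak{E}$-eigenvalues and matching the two functors on each piece) or a careful stationary-phase analysis in a family. Once that comparison is in place, (2) and (3) follow formally from the corresponding properties of $\FL^{\mathrm{S}}$.
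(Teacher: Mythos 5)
Your plan is not a proof as it stands: the step you yourself single out as ``the main obstacle'' --- identifying the algebraic $\FL$ (defined by the kernel $\mcl$) with the topological Fourier--Sato/monodromic transform on the monodromic subcategory --- is exactly the content that carries all the weight, and you leave it unestablished. This is also precisely how the paper proceeds: it does not reprove anything, but imports Brylinski's results for constructible monodromic complexes (his Corollaire 6.12 for (1), Proposition 6.13 for (2), Corollaire 7.23 for (3)) and translates them to $\mcd$-modules via the Riemann--Hilbert correspondence, using Brylinski's Proposition 7.12 and Th\'eor\`eme 7.24 for the comparison of $\FL$ with the Fourier--Sato transform. So your route is in essence Brylinski's route, but with its decisive comparison replaced by a promissory note; the remaining ingredients you offer (stationary phase producing a ``$\delta$-type'' kernel on the antidiagonal, a ``conicity argument'' for vanishing of higher direct images, matching the two functors eigenspace by eigenspace) are heuristics rather than arguments, so (2) and (3) are not actually proved.

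Two further points need repair even at the level of the sketch. First, the kernel identity you use for (1) is false as stated: on the generator $e^{-\langle \fraky,\frakx\rangle}$ one has $(x_i\partial_{x_i}-y_i\partial_{y_i})e=0$, not $(x_i\partial_{x_i}+y_i\partial_{y_i}+1)e=0$; the relation $\mathfrak{E}_V=-\mathfrak{E}_{\hatV}-n$ holds on the transformed module only after performing the direct image $p_{2+}$ (integration by parts), which is where the shift by $n$ enters. The conclusion of (1) survives via the standard Weyl-algebra intertwining $x_i\mapsto\mp\partial_{y_i}$, $\partial_{x_i}\mapsto\pm y_i$, but the computation must be set up there, not on $\mcl$. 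Second, for $\FL\circ\FL\simeq\Id$ the composite kernel computation gives a priori the antipodal pullback $a^+$ (with $a(\frakx)=-\frakx$), not the identity; saying the discrepancy is ``absorbed because one is working on the monodromic subcategory'' is an assertion, and the trivialization of $a^+$ on monodromic objects (via local finiteness of the Euler action, or via Brylinski's Proposition 6.13) is a point you must argue, not assume.
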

\begin{proof}
The above statements are stated in \cite{Brylinski} for constructible
monodromic complexes. One has to use the Riemann-Hilbert
correspondence, \cite[Proposition 7.12, Theorem 7.24]{Brylinski} to
translate the statements. So the first statement is Corollaire 6.12,
the second statement is Proposition 6.13 and the third is Corollaire
7.23 in \cite{Brylinski}.
\end{proof}

We will now consider the monodromic Fourier--Laplace transform (or Fourier--Sato transform) which preserves the category of mixed Hodge modules.
\begin{defn}\label{defn:FStrafo}
Consider the diagram
\[
\xymatrix{
   &{\overbrace{\mbc^n \times \mbc^n}^{\hatV\times V}} \ar[dl]_{p_1}\ar[dr]^{\omega}&\\
  \hatV=\mbc^n & & \mbc_z \times V &\ar[l]_{i_0} {\overbrace{\{0 \} \times
    \mbc^n}^{\{0 \} \times V}
  }
}
  \]
where $p_1$ is the projection to the first factor, $i_0$ is the inclusion and the map $\omega$ is given by
\begin{align*}
\omega\colon \hatV \times V &\lra \mbc_z \times V \\
(\fraky,\frakx) &\mapsto (\frakz=\sum \frakx_i \fraky_i ,\fraky) \notag
\end{align*}
The \emph{Fourier--Sato transform} or \emph{monodromic Fourier transform}  is defined by
\begin{align*}
\rmD^b(\MHM(\hatV)) &\lra \rmD^b(\MHM(V)) \\
\mcm &\mapsto \phi_z \omega_* {^p}p_1^{!} \mcm  \simeq \phi_z \omega_! {^p}p_1^! \mcm\notag
\end{align*}
where $\phi_z$ is the nearby cycle
functor along $z=0$ and we write ${^p}f^! := f^![d_Y - d_X]$ for a map $f: X \ra Y$. 
The isomorphism follows from \cite[Proposition 10.3.18]{KS}.
\schluss\end{defn}
\begin{rem}
The original definition of the Fourier--Sato transform is different;
we use here an equivalent version (see \cite[Def.\ 3.7.8,
    Prop.\ 10.3.18]{KS}) that is well adapted to mixed Hodge modules.
  \schluss\end{rem}

For a monodromic complex the (usual) Fourier--Laplace transformation
and the monodromic Fourier transformation are the same (we use again
the equivalent version of the Fourier-Sato version from \cite{KS}):
\begin{thm}\cite[Th\'eor\`{e}me 7.24]{Brylinski}
Let $\mcm \in D^b_{mod}(\mcd_{\hatV})$ then 
\[
DR^{an} ( \FL(\mcm)) \simeq \phi_z \omega_* {^p}p_1^{!} DR^{an}(\mcm).\pushQED{\qed}\qedhere
\]
\end{thm}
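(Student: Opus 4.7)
The plan is to deduce this from its constructible-sheaf analog \cite[Th\'eor\`eme 7.24]{Brylinski} via the Riemann--Hilbert correspondence. Both sides of the claimed isomorphism are functors $D^b_{mon}(\mcd_{\hatV}) \to D^b_{mon}(\mcd_V)$, and the strategy is to realize each as the Riemann--Hilbert image of a topological functor on monodromic constructible complexes, then invoke Brylinski's identity.

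First I would recall that the algebraic Fourier--Laplace transform $\FL$ is intertwined under Riemann--Hilbert with the Fourier--Deligne transform on monodromic constructible complexes; this is the content of \cite[Proposition 7.12]{Brylinski}. Next I would verify that $\phi_z\,\omega_*\,{^p}p_1^!(-)$ likewise commutes with Riemann--Hilbert: the direct and inverse image functors $\omega_*$ and $p_1^!$ intertwine with their topological counterparts by the standard six-functor compatibilities (with ${^p}p_1^! = p_1^![-n]$ giving the correct perverse normalization), and the algebraic nearby cycle functor $\phi_z$ on $\mcd$-modules corresponds to its topological cousin by the Kashiwara--Malgrange--Mebkhout theorem on $V$-filtrations. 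With both sides then expressed as Riemann--Hilbert images of the same topological functor, the claimed isomorphism reduces directly to \cite[Th\'eor\`eme 7.24]{Brylinski}.

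The hard part is bookkeeping rather than conceptual: one must carefully track shift and twist conventions so that the perverse shift ${^p}p_1^! = p_1^![-n]$, the choice of $\omega_*$ versus $\omega_!$ (which coincide on the right-hand side by the alternative formula $\phi_z\,\omega_!\,{^p}p_1^!\mcm$ noted in Definition \ref{defn:FStrafo}), and the exponential twist $\exp(-\langle -, - \rangle)$ defining $\mcl$ all match the analogous conventions on the constructible side. No additional calculation beyond Brylinski's is required; the monodromicity hypothesis on $\mcm$ is what ensures the nearby cycle description on the right captures the full Fourier--Laplace transform rather than merely its vanishing-cycle part.
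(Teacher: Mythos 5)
The paper gives no argument for this statement: it is quoted directly from Brylinski (up to rewriting the Fourier--Sato functor in the form $\phi_z\omega_*{^p}p_1^!$ adopted in Definition \ref{defn:FStrafo}), just as the preceding theorem is, so your plan of ``translate Brylinski through Riemann--Hilbert'' is in substance the same move the paper makes, and citing \cite[Prop.~7.12, Th.~7.24]{Brylinski} is already a complete justification.

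Two cautions, however, if your sketch were read as a self-contained proof. The step you describe as bookkeeping --- matching the exponential kernel $\mcl=\mco_{\hatV\times V}\cdot\exp(-\langle-,-\rangle)$ with ``the analogous conventions on the constructible side'' --- is not bookkeeping: $\mcl$ is an irregular holonomic module, so the functor $\FL=p_{2+}(p_1^+(-)\otimes\mcl)$ has no a priori constructible counterpart and the standard six-functor/Riemann--Hilbert compatibilities simply do not apply to it. The assertion that, on monodromic complexes, this irregular integral transform agrees with the regular, topological construction $\phi_z\omega_*{^p}p_1^!$ is precisely the content of Brylinski's theorem, not something recoverable from formal compatibilities plus a ``constructible analog'' of that same theorem; so your argument is sound only insofar as it cites 7.12/7.24 for exactly this intertwining, at which point the remaining reductions are redundant. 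Separately, your closing sentence conflates nearby and vanishing cycles: $\phi_z$ in the formula is the vanishing-cycle functor (see the paper's list of symbols; the wording in Definition \ref{defn:FStrafo} is a slip), and the role of monodromicity is rather to guarantee that $\FL(\mcm)$ is again regular holonomic and monodromic, so that a topological description of it can exist at all.
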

It follows that the monodromic Fourier transform induces an exact functor
\[
\phi_z \omega_* {^p}p_1^! : \MHM(\hatV) \lra \MHM(V)
\]
We next identify a class of modules for which the monodromic Fourier
transform has a very simple effect on the weight filtration.
\begin{prop}
Let $\pi:\hatV \smallsetminus \{0\}  \ra \mbp(\hatV)$ be the natural projection and $j_0: \hatV \smallsetminus \{0\} \ra \hatV$ the inclusion. Let $\mcm \in \MHM(\hatV)$ such that $\mcm \simeq (j_0)_* \pi^! \mcn$ for some $\mcn \in \rmD^b \MHM(\mbp(\hatV))$. Then
\[
W_{k-n} \left(\phi_z \omega_* {^p}p_1^! \mcm \right) \simeq  \phi_z \omega_* {^p}p_1^! (W_k \mcm )
\]
\end{prop}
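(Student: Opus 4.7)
The plan is to show that the monodromic Fourier--Sato functor $\Phi := \phi_z\omega_*{^p}p_1^!$, which is already known to be exact on all of $\MHM(\hatV)$ by the statement displayed before the proposition, additionally preserves pure weight when restricted to the subcategory of MHMs of the form $(j_0)_*\pi^!\mcn$. Granting these two facts, the conclusion follows by an abstract argument: since $\pi^!$ is t-exact and $(j_0)_*$ preserves the weight filtration on modules of this homogeneous shape, each $W_k\mcm$ is again of the form $(j_0)_*\pi^! W_k\mcn$ so our subcategory is closed under the weight filtration. Applying $\Phi$ to the exact sequence
\[
0 \lra W_k\mcm \lra \mcm \lra \mcm/W_k\mcm \lra 0
\]
gives a short exact sequence in $\MHM(V)$; then $\Phi(W_k\mcm)$ is filtered by pure subquotients of weight $\le k$, and $\Phi(\mcm/W_k\mcm)$ by pure subquotients of weight $>k$, so the uniqueness of the weight filtration in $\MHM(V)$ forces $\Phi(W_k\mcm) = W_k\Phi(\mcm)$.

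First I would make precise that the hypothesis is stable under the weight filtration: since $\pi$ is smooth and $j_0$ is an open immersion that does not meet the closed stratum where the $(j_0)_*$-extension can fail to be t-exact on arbitrary MHMs (everything upstairs being homogeneous and with support away from $0$), each of $W_k\mcm$ and $\mcm/W_k\mcm$ inherits the form $(j_0)_*\pi^!(-)$.

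Next I would prove weight-preservation on pure objects: suppose $\mcm = (j_0)_*\pi^!\mcn$ with $\mcn$ pure of weight $w$. Then $\pi^!\mcn$ is pure of weight $w$ (after the ${}^p$-normalization) since $\pi$ is smooth, and $(j_0)_*$ leaves its weight unchanged because of the homogeneous support condition. The pullback ${^p}p_1^!\mcm$ remains pure of weight $w$ by t-exactness of smooth pullback. The push-forward $\omega_*$ can be analyzed via a factorization through the projectivization of the first factor: the conic/$\mbc^*$-equivariant structure on $\mcm$ interacts with $\omega$'s action on the $z$-coordinate (via $z=\sum x_iy_i$) to realize $\omega_*{^p}p_1^!\mcm$ as a proper push-forward from a projective family over $V$, so the Decomposition Theorem in $\MHM$ preserves purity of weight $w$. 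Finally, $\phi_z$ applied to the resulting conic MHM, whose monodromy around $z=0$ is semisimple (it arises from the $\mbc^*$-scaling eigendecomposition), reduces to an exact restriction-type operation that preserves pure weight.

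The principal obstacle is the $\phi_z$ step: the nearby cycle functor in general endows its output with the monodromy weight filtration, which shifts weights by $\pm 1$ and mixes them for arbitrary monodromic MHMs. What rescues the argument is the hypothesis on $\mcm$: the conic structure of $(j_0)_*\pi^!\mcn$ is transported by ${^p}p_1^!$ and preserved by $\omega_*$, so the module on $\CC_z\times V$ produced before applying $\phi_z$ carries a semisimple $\mbc^*$-monodromy around $z=0$. Under semisimple monodromy the monodromy weight filtration of $\phi_z$ collapses to the identity, making $\phi_z$ weight-preserving on the nose. Verifying this semisimplicity carefully --- ideally by exhibiting an explicit eigendecomposition coming from the original $\mbc^*$-action on $\hatV$ --- is the technical heart of the proof.
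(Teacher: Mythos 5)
Your proposal correctly identifies the key mechanism: the conic structure coming from the hypothesis $\mcm\simeq(j_0)_*\pi^!\mcn$ forces the monodromy around $z=0$ to be trivial, and this is indeed where the hypothesis does its work (the paper proves $N=0$ by an explicit geometric computation showing that the restriction of $\omega_*{}^pp_1^!\mcm$ to $z\neq 0$ is pulled back from $V$, constant in $z$). However, the surrounding strategy of reducing to the pure case has several gaps. First, the claim that $(j_0)_*$ preserves purity ``because of the homogeneous support condition'' is false: already for $\mcn$ the constant Hodge module on $\mbp(\hatV)$, the object $(j_0)_*\pi^!\mcn = (j_0)_*{}^p\QQ^H_{\hatV\smallsetminus 0}$ has a nontrivial weight filtration, with $\IC_{\hatV}$ at the bottom and pieces supported at the origin above it. So $\mcn$ pure does not give $\mcm$ pure, and the alleged reduction does not start. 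Second, $\omega$ is affine and not proper, so $\omega_*$ only bounds weights from below; the claimed realization as a proper push-forward from a projective family is neither constructed nor plausible for this affine map, so the purity-preservation step for $\omega_*$ also fails. Third, even granting $N=0$, the statement that ``$\phi_z$ becomes weight-preserving on the nose'' is not correct: the relative monodromy filtration with $N=0$ collapses to the auxiliary filtration $L$, which is the filtration induced by the weight filtration of the \emph{intermediate} module $\mch^0\omega_*{}^pp_1^!\mcm$ on $\CC_z\times V$, not by that of $\mcm$. Relating the two is precisely what remains to be done.

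This last point is where the paper's proof has to work hardest, and your proposal omits it entirely. After establishing $N=0$, the paper runs a decreasing induction on $k$ that compares $\omega_!$ and $\omega_*$: since $\omega_!$ does not increase weight and $\omega_*$ does not decrease weight, the image $\mci_k$ of $\mch^0\omega_!{}^pp_1^!W_k\mcm\to\mch^0\omega_*{}^pp_1^!W_k\mcm$ has weight $\le k$, and $\phi_z$ identifies $\phi_z\mci_k\simeq\phi_z\omega_*{}^pp_1^!W_k\mcm$. Combined with $N=0$ this yields the inclusion $W_k\phi_z\omega_*{}^pp_1^!\mcm\supseteq\phi_z\omega_*{}^pp_1^!W_k\mcm$ and, together with a purity statement for the graded pieces, the reverse inclusion. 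Your proposal replaces this sandwich argument with a purity-of-the-functor claim that is not available. Finally, the assertion that the class of modules of the form $(j_0)_*\pi^!(-)$ is closed under the weight filtration is asserted without proof and is also doubtful; the paper does not need it, precisely because it does not attempt the reduction to pure $\mcn$.
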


\begin{proof}
We first prove that the logarithm of the monodromy $N$ acts trivially on $\phi_z \omega_* p_1^! \mcm$. Define the subvarieties 
\begin{align*}
U &:= \{\sum_{i=1}^n \fraky_i \frakx_i \neq 0\} \subseteq \mbp(\hatV) \times V \\
\widetilde{U}&:= \{\sum_{i=1}^n \fraky_i \frakx_i \neq 0\} \subseteq (\hatV \smallsetminus \{0\}) \times V \\
U_1 &:= \{\sum_{i=1}^n \fraky_i \frakx_i = 1\} \subseteq (\hatV \smallsetminus \{0\}) \times V
\end{align*}
with the embeddings $j_U: U \ra \mbp(\hatV) \times V$ and  $\widetilde{j}: \widetilde{U} \ra (\hatV\smallsetminus \{0\}) \times V$. Notice that we have isomorphisms
\[
  \begin{minipage}{0.3\textwidth}\begin{eqnarray*}
      f\colon \mbc^* \times U_1 &\lra& \widetilde{U}\\
      (\frakz, \fraky,\frakx) &\mapsto& ( \frakz \cdot\fraky,\frakx)\end{eqnarray*}\notag
    \end{minipage}
  \qquad\qquad\text{and}\qquad\qquad
  \begin{minipage}{0.3\textwidth}\begin{eqnarray*}
      g\colon U_1 &\lra& U \\
      (\fraky,\frakx) &\mapsto& ((\fraky_1:\ldots : \fraky_n),\frakx).\end{eqnarray*}\notag
  \end{minipage}
\]
Consider now the following diagram
\[
\xymatrix{\hatV & \hatV \times V \ar[l]_{p_1} \ar[r]^{\omega} & \mbc_z \times V & \mbc^*_z \times V \ar[l]_j\\
(\hatV \smallsetminus \{0\}) \ar[u]^{j_0} \ar[d]_\pi & (\hatV \smallsetminus \{0\}) \times V \ar[u]^{j_0 \times \id} \ar[l]_{\overline{p}_1} \ar[d]_{\pi \times \id} \ar[ur]_{\overline{\omega}} & \widetilde{U} \ar[ur]^{\tilde{\omega}} \ar[d]^{\pi_U} \ar[l]^-{\tilde{j}} & \mbc^*_z \times U_1 \ar[l]_f^\simeq \ar[u]_{\id \times \xi} \ar[d]^{p_2}\\
\mbp(\hatV) & \mbp(\hatV) \times V \ar[l]_{\pi_1} & \ar[l]_-{j_U} U &U_1 \ar[l] _g
^\simeq}
\]
where $\xi: U_1 \subseteq (\hatV\smallsetminus\{0\}) \times V \ra V$ is the projection to the second factor and $\overline{p}_1,\pi_1$ resp. $\overline{\omega}$, $\tilde{\omega}$ are the corresponding restrictions of $p_1$ resp. $\omega$.

We have the following isomorphisms
\begin{align*}
j^! \omega_* {^p}p_1^! \mcm &\simeq j^! \omega_* {^p}p_1^! j_{0*} \pi^! \mcn \\
&\simeq j^! \omega_* (j_0 \times \id)_* {^p}\overline{p}_1^! \pi^! \mcn \\
&\simeq j^! \omega_* (j_0 \times \id)_* (\pi \times \id)^! {^p}\pi_1^! \mcn \\
&\simeq j^! \overline{\omega}_* (\pi \times \id)^! {^p}\pi_1^! \mcn \\
&\simeq \tilde{\omega}_* \tilde{j}^! (\pi \times \id)^! {^p}\pi_1^! \mcn\\
&\simeq \tilde{\omega}_* \pi_U^! j_U^! {^p}\pi_1^! \mcn \\
&\simeq (\id \times \xi)_* (f^{-1})_* \pi_U^! j_U^! {^p}\pi_1^! \mcn \\
&\simeq (\id \times \xi)_* f^! \pi_U^! j_U^! {^p}\pi_1^! \mcn \\
&\simeq (\id \times \xi)_*p_2^!g^!j_U^! {^p}\pi_1^! \mcn
\end{align*}
Set $\mcn':= g^!j_U^! \pi_1^! \mcn$. We have $(\id \times \xi)_*p_2^! \mcn' \simeq \tilde{p}^!_2\xi_* \mcn'$ where $\tilde{p}_2: \mbc^*_z \times V \ra V$ is the projection to the second factor. This  shows that $j^! \omega_* p_1^! \mcm \simeq \tilde{p}^!_2\xi_* \mcn'$ is constant in the $z$-direction. Hence the logarithm of the monodromy $N$ acts trivially on the (unipotent) nearby cycles $\psi_z \omega_* p_1^! \mcm$ and therefore also on the vanishing cycles $\phi_z \omega_* p_1^! \mcm$.

Set $L_i \phi_z \omega_* {^p}p_1^! \mcm := \phi_z W_i \mch^{0}
\omega_* {^p}p_1^! \mcm$. The weight filtration on $\phi_z \omega_*
      {^p}p_1^! \mcm = \phi_z \mch^0 \omega_* {^p}p_1^! \mcm$ is the
      relative monodromy weight filtration with respect to the
      filtration $L$ and the nilpotent endomorphism $N$. In the
      (current) case $N = 0$ we simply get $W_i  \phi_z \omega_* p_1^! \mcm = L_i \phi_z \omega_* p_1^! \mcm = \phi_z W_i \mch^0\omega_* p_1^! \mcm$
(cf. \cite[(2.2.7) \& Proposition 2.4]{SaitoMHM}).\\

We now want to prove by decreasing induction on $\ell$ that
\begin{itemize}
\item $W_{\ell-n} \phi_z \omega_* {^p}p_1^! \mcm = \phi_z \omega_* {^p}p_1^! W_\ell \mcm$ 
\item $\phi_z \omega_* {^p}p_1^! Gr^W_\ell \mcm$ is pure of weight $\ell-n$.
\end{itemize}
This is certainly true for $\ell \gg 0$ since in this case $W_{\ell} \mcm = \mcm$. Assume now that the two statements above are true for some $\ell$, we prove the two statements for $\ell-1$. For this consider the exact sequence
\begin{equation}\label{eq:omegaseq}
\phi_z \omega_* {^p}p_1^! W_{\ell-1} \mcm \lra \phi_z \omega_* {^p}p_1^! W_{\ell} \mcm \lra \phi_z \omega_* {^p}p_1^! Gr^W_{\ell} \mcm.
\end{equation}
Since $ W_{\ell-n}\phi_z \omega_* {^p}p_1^! \mcm = \phi_z \omega_*{^p}p_1^! W_{\ell}\mcm $ and since $\phi_z \omega_* {^p}p_1^! Gr^W_{\ell} \mcm$ is pure of weight $\ell-n$ we see that
\[
\phi_z \omega_* {^p}p_1^! W_{\ell-1} \mcm \supseteq W_{\ell-1-n} \phi_z \omega_* {^p}p_1^!  \mcm.
\]
To show the other inclusion, we consider the morphism
\begin{gather}\label{eqn-Iell-1}
\mch^0 \omega_! {^p}p_1^! W_{\ell-1} \mcm  \lra \mci_{\ell-1} \lra \mch^0 \omega_* {^p}p_1^! W_{\ell-1}\mcm,
\end{gather}
where $\mci_{\ell-1}$ is the image of the morphism $\mch^0 \omega_! {^p}p_1^! W_{\ell-1}\mcm  \ra \mch^0 \omega_* {^p}p_1^! W_{\ell-1}\mcm$. Notice that the map \eqref{eqn-Iell-1} becomes an isomorphism after applying $\phi_z$ (cf.\cite[equation 10.3.32]{KS}).\\

Since ${^p}p_1^!W_{\ell-1} \mcm = W_{\ell-1-n} {^p}p_1^! \mcm$ and the functor $\omega_!$ does not increase weight we have $\mch^0 \omega_! {^p}p_1^! W_{\ell-1}\mcm \subseteq W_{\ell-1-n} \mch^0 \omega_! {^p}p_1^! \mcm$. Because $\mci_{\ell-1}$ is a quotient of $\mch^0 \omega_! {^p} p_1^! W_{\ell-1}\mcm$ we also have $W_{\ell-1-n} \mci_{\ell-1} = \mci_{\ell-1}$. Since $\mci_{\ell-1}$ is a subobject of $\mch^0 \omega_* {^p}p_1^! W_{\ell-1}$ the morphism $N$ acts trivially and therefore $W_{\ell-1-n} \phi_z \mci_{\ell-1} = \phi_z \mci_{\ell-1}$. The isomorphism $\phi_z \mci_{\ell-1} \simeq \phi_z \omega_* {^p}p_1^! W_{\ell-1} \mcm$ shows 
\[
\phi_z \omega_* {^p}p_1^! W_{\ell-1} \mcm = W_{\ell-1-n}\phi_z \omega_* {^p}p_1^! W_{\ell-1} \mcm   \subseteq W_{\ell-1-n} \phi_z \omega_* {^p}p_1^!  \mcm
\]

We now want to show that $\phi_z \omega_* {^p}p_1^! Gr^W_{\ell-1} \mcm$ is pure of weight $\ell-1-n$. For this consider the morphisms
\[
\mch^0 \omega_! {^p}p_1^! Gr^W_{\ell-1} \mcm \lra \mcg_m \lra \mch^0 \omega_* {^p}p_1^! Gr^W_{\ell-1} \mcm
\]
where $\mcg_{\ell-1}$ is the  image of the morphism $\mch^0 \omega_! {^p}p_1^! Gr^W_{\ell-1} \mcm \ra \mch^0 \omega_* {^p}p_1^! Gr^W_{\ell-1} \mcm$. Notice again that the map above becomes an isomorphism after applying $\phi_z$. Since ${^p}p_1^!$ shifts weight by $-n$, and since $\omega_!$ does not increase weight and since $\omega_*$ does not decrease weight the module $\mcg_{\ell-1}$ is pure of weight $\ell-1-n$. Since $\phi_z \mcg_{\ell-1}$ is a subobject of $\phi_z \mch^0 \omega_* {^p}p_1^! Gr^W_{\ell-1} \mcm$ and $\phi_z \mch^0 \omega_* {^p}p_1^! Gr^W_{\ell-1} \mcm$ is a quotient of $\phi_z \mch^0 \omega_* {^p}p_1^! \mcm$ the morphism $N$ is trivial on $\phi_z \mcg_{\ell-1}$. Therefore $\phi_z \mcg_{\ell-1} \simeq \phi_z \mch^0 \omega_*{^p}p_1^! Gr^W_{\ell-1} \mcm$ is pure of weight $\ell-1-n$.

  This finishes the proof of the proposition.
\end{proof}

\begin{rem}
  The reader might note that \cite[Thm.~1.4]{ChenDirks} contains
  (amongst other things) related results on monodromic mixed Hodge
  modules and weight filtrations.
\end{rem}

If we endow the GKZ-system $\mcm^0_A$ with the mixed Hodge module
structure coming from the monodromic Fourier transformation we get the following result.

\begin{cor}\label{cor:monodromic}

  For homogeneous $A$ in the context of Corollary \ref{cor:hatW} and
  Definition \ref{defn:FStrafo}, let ${^H\!\!}\mcm^0_A$ be the
  GKZ-system endowed with the mixed Hodge module structure coming from
  the isomorphism
  \[
  \mcm^0_A \simeq \Dmod(\phi_z \omega_*{^p}p_1^!
  {}^H\!\!\hatmcm_A^0)
  \]
  with ${^H\!\!}\hatmcm^0_A$ as in Corollary \ref{cor-Mhat}.
  Then
\[
\Dmod (W_{k-n} {^H\!\!}\mcm^0_A) = \phi_z \omega_*{^p}p_1^!(W_k{^H\!\!}\hatmcm^0_A)=\FL(W_k\hatmcm^0_A).
\]
\end{cor}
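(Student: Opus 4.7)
The plan is to reduce the corollary to a direct application of the preceding proposition. Concretely, I aim to verify the hypothesis $\mcm \simeq (j_0)_*\pi^!\mcn$ with $\mcm := {^H\!\!}\hatmcm^0_A$, and then deduce both displayed equalities — the first from the proposition itself after passing to underlying $\mcd$-modules, and the second from Brylinski's identification of $\FL$ with the monodromic Fourier transform.

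First I would exploit homogeneity to factor $\TVmap$. Since every column of $A$ has first entry $1$, each monomial $\frakt^{\bolda_j}$ is a unit on $T$, so $\TVmap(T)\subseteq \hatV\smallsetminus\{0\}$. Writing $\TVmap = j_0\circ \TXmap'$ with $\TXmap'\colon T \to \hatV\smallsetminus\{0\}$ gives ${^H\!\!}\hatmcm^0_A \simeq (j_0)_*\TXmap'_*\,{^p}\mbq^H_T$. Homogeneity further splits $T \simeq \mbg_m \times T_0$ with $\mbg_m$ acting by the scaling subgroup that maps to the homothety on $\hatV$, and produces an induced map $\bar\TXmap\colon T_0 \to \mbp(\hatV)$ sitting in the square
\[
\xymatrix{
T \ar[r]^-{\TXmap'} \ar[d]_{q} & \hatV\smallsetminus\{0\} \ar[d]^-{\pi} \\
T_0 \ar[r]^-{\bar\TXmap} & \mbp(\hatV)
}
\]
in which the vertical arrows are $\mbg_m$-bundle projections. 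I would then check this square is Cartesian, apply smooth base change, and combine with the formula $q^!\,{^p}\mbq^H_{T_0} \simeq {^p}\mbq^H_T[1](1)$ to obtain $\TXmap'_*\,{^p}\mbq^H_T \simeq \pi^!\mcn$ where $\mcn := \bar\TXmap_*\,{^p}\mbq^H_{T_0}[-1](-1) \in \rmD^b\MHM(\mbp(\hatV))$. This realises ${^H\!\!}\hatmcm^0_A \simeq (j_0)_*\pi^!\mcn$ and makes the preceding proposition directly applicable.

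Applying that proposition with $\mcm = {^H\!\!}\hatmcm^0_A$ gives
\[
W_k\bigl(\phi_z\omega_*\,{^p}p_1^!\,{^H\!\!}\hatmcm^0_A\bigr) \;\simeq\; \phi_z\omega_*\,{^p}p_1^!\bigl(W_k\,{^H\!\!}\hatmcm^0_A\bigr)
\]
in $\MHM(V)$. Applying $\Dmod$ and the defining isomorphism ${^H\!\!}\mcm^0_A \simeq \Dmod(\phi_z\omega_*\,{^p}p_1^!\,{^H\!\!}\hatmcm^0_A)$, together with the fact that the MHM-level functors $\phi_z$, $\omega_*$ and ${^p}p_1^!$ lift their $\mcd$-module counterparts, produces the first equality of the corollary. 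For the second equality I would observe that $\hatmcm^0_A$ is monodromic by homogeneity, hence so is the sub-$\mcd$-module $W_k\hatmcm^0_A$; Brylinski's theorem then identifies $\FL(W_k\hatmcm^0_A)$ with $\phi_z\omega_*\,{^p}p_1^!(W_k\hatmcm^0_A)$, closing the chain.

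The hard part will be the structural identification in the first step: verifying that the square above is genuinely Cartesian (so that smooth base change for $\pi^!$ applies) and tracking the Tate twists and shifts correctly so that $\mcn$ lives in $\rmD^b\MHM(\mbp(\hatV))$. Once in place, the remainder is essentially bookkeeping: the proposition handles the interplay of weight filtration with $\phi_z\omega_*\,{^p}p_1^!$, and Brylinski's theorem handles the passage between $\FL$ and $\phi_z\omega_*\,{^p}p_1^!$ on the monodromic category.
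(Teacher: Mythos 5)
Your proposal is correct and follows essentially the same route as the paper: the paper's proof likewise factors $\TVmap$ through $\hatV\smallsetminus\{0\}$, uses the quotient $pr\colon T\to\overline T$ by the scaling $\mbg_m$ together with the projectivized map $\overline\TVmap\colon\overline T\to\mbp(\hatV)$, the relation ${^p}\mbq^H_T\simeq pr^!\,{^p}\mbq^H_{\overline T}(-1)[-1]$ and base change along the Cartesian square to exhibit ${^H\!\!}\hatmcm^0_A\simeq (j_0)_*\pi^!\mcn$ with $\mcn=\overline\TVmap_*{^p}\mbq^H_{\overline T}(-1)[-1]$, and then invokes the preceding proposition and Brylinski's identification of $\FL$ with the monodromic Fourier transform exactly as you do. Your shifts and twists check out, so there is no gap to report.
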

\begin{proof}
It remains to shows that ${^H\!\!}\hatmcm^0_A$ can be written as $(j_0)_* \pi^! \mcn$ for some $\mcn \in \rmD^b \MHM(\mbp(\hatV))$. Consider the diagram
\[
\xymatrix{T \ar[dd]^{pr}  \ar[drr]^{\TVmap_0}\ar[rr]^\TVmap & & \hatV \\
& & \hatV \setminus \{0\}  \ar[u]^{j_0} \ar[d]^\pi \\
\overline{T} \ar[rr]^{\overline{\TVmap}} && \mbp(\hatV) }
\]
where $pr: T \ra \overline{T}$ is the projection to the last $d-1$ coordinates $\TVmap = j_0 \circ \TVmap_0$ is the canonical factorization and $\overline{\TVmap}$ is the projectivization of $\TVmap$. We have
\begin{align*}
{^H\!\!}\hatmcm^0_A \simeq \TVmap_* {^p}\mbq^H_T &\simeq \TVmap_* pr^*\, {^p}\mbq^H_{\overline{T}} \simeq \TVmap_* pr^!\, {^p}\mbq^H_{\overline{T}} \simeq \TVmap_* pr^! \, {^p}\mbq^H_{\overline{T}}(-1)[-1]\\
 &\simeq (j_0)_* (\TVmap_0)_*pr^! \, {^p}\mbq^H_{\overline{T}}(-1)[-1] \simeq (j_0)_* \pi^!  \underbrace{\overline{\TVmap}_*  {^p}\mbq^H_{\overline{T}}(-1)[-1] }_{=:\mcn}
\end{align*}
\end{proof}

\begin{rem}
In \cite{Reich1} a homogeneous GKZ-system was equipped with the structure of a mixed Hodge module by using the Radon transformation. At the moment we do not know if both mixed Hodge module structures coincide, but we believe that they coincide up to Tate twist.
\end{rem}

\section{Explicit weight filtration for $d=3$}\label{sec-d=3}

Throughout this section, $A$ is normal but not necessarily homogeneous. Via the Fourier transform
$\FL$ one can port the weight filtration on the mixed Hodge module
$\TVmap_*{}^p\QQ^H_T$ to the hypergeometric system $\calM_A^0$. While
the latter may not be a mixed Hodge module, one still obtains in any
case a filtration that has semisimple associated graded pieces and
which we still denote by $W_\bullet$. If $A$
is homogeneous, then $\calM_A^0$ is a mixed Hodge module and, by
Corollary \ref{cor:monodromic}, $\FL$ agrees with the functor
$\phi_z\omega_*{}^pp_1^!$ and relates the weight filtrations on
$\calM_A^0$ and $\TVmap_*{}^p\QQ^H_T$.  In this section we consider
specifically the cases when either $\NN A$ is simplicial, or when $d\le3$
and write out an explicit filtration in terms of generators that
agrees with $W_\bullet$.

Batyrev proved that in the homogeneous, normal case the weight
filtration on the restriction of $\mcm^0_A$ to the complement of the
principal $A$-discriminant is given by the face filtration on $S_A$ in
the sense that (in the localization) $W_{d+k}(\calM_A^0)$ is generated
by the $\del$-monomials whose degree sits in the relative interior of
a face of $\sigma$ whose codimension is at most $k$; see \cite[Thm.~8,
  p.28]{Sti}. It has been speculated that this be true even on
$\mcm^0_A$ itself. We show here that this is the case for simplicial
homogeneous $\sigma$ but can fail in the general homogeneous case
already in dimension three. We discuss completely in terms of
generators the filtration $\FL(W_\bullet \TVmap_*(\calO_T))$ if $d=3$
and $A$ is normal (but not necessarily homogeneous). Then $\sigma$ is
the cone over a $(d-1)$-dimensional polygon $P$ with $f_0$ vertices
and $P$ arises as intersection of $\sigma$ with a generic
hyperplane. It is not suggested or required that the columns of $A$
lie on $P$. It is sufficient to concentrate on the global sections
$M_A^0$.

\begin{ntn}\label{ntn-batyrev}
  On $M_A^0$, let $W'_\bullet$ be the filtration of Batyrev:
  \[
  W'_{d+k}(M_A^0)=\text{image of }D_A\cdot\left\{\del^\boldu\mid A\cdot \boldu=\bolda\in\Int(\NN\tau),
  \dim(\tau)\geq \dim(\sigma)-k\right\} \text{ in }M_A^0
  \]
for $d\le k\le 2d$. In particular, $W'_{< d}(M_A^0)=0$ and
$W_{>2d}(M_A^0)=M_A^0$. 
  
  For $d=3$ let $W''_\bullet$ be the filtration
  \[
  W''_k(M_A^0)=\left\{\begin{array}{rl}
  W'_k(M_A^0)&\text{ if } k\neq 2d-2,\\
  W_k'+\sum_r D_A\cdot e_r&\text{ if }k=2d-2,\end{array}\right.
  \]
  where $e_r$ is defined below in \eqref{eq-e_r}.
\end{ntn}

For ease of
notation , we do not repeat ``$M_A^0$'' each time we write a
filtration piece.
We will show that $W''=W$ if $d\le
3$, and that $W'=W''=W$ if $\sigma$ is simplicial.
For this,
consider the toric modules defined as follows.
\begin{ntn}
  If $\tau$ is a face of $\sigma$ write $\del_\tau^+$ for the $S_A$-ideal
generated by the $\del$-monomials whose degree is interior to $\tau$. 
  Let $S_A^{(k)}$ be the ideal of $S_A$
  spanned by the monomials that are interior to a face of codimension
  $k$ or less, $S_A^{(k)}=\sum_{\dim\tau\geq d-k}\del_\tau^+$. Then $S_A^{(0)}$ is the interior ideal, $S_A^{(d-1)}$ is
  the maximal ideal $S_A\del_A$, and $S_A^{(d)}$ is $S_A$ itself.
\end{ntn}
We begin with showing that for normal $S_A$ the $D_A$-module generated by the interior
ideal $S_A^{(0)}$ inside $M_A^0$ is simple and for homogeneous $A$ agrees with
$W_d$ so that $W_k=W'_k=W''_k$ for $k\le d$.

\begin{lem}\label{lem:int}
  Suppose $A$ is pointed and saturated, but not necessarily 
  homogeneous. Let $\boldu,\boldv\in\NN^n$ be
  such that $\boldb:=A\cdot\boldv$ is in the interior $\Int(\NN A)$ of
  the semigroup (\emph{i.e.}, not on a proper face). Set
  $\bolda=A\cdot\boldu$. Then the contiguity
  map $c_{-\bolda-\boldb,-\boldb}\colon M_A^{-\bolda-\boldb}\stackrel{\cdot\del^\boldu}{\to} M_A^{-\boldb}$
  is an  isomorphism.

  In particular, the ideal in $M_A^0$ generated by $\del^\boldb$ (
  the image of the contiguity morphism $c_{-\boldb,0}\colon
  M_A^{-\boldb}\to M_A^0$)  is the same for all $\boldb=A\cdot \boldv$
  in the interior of $A$. 
\end{lem}
\begin{proof}
  Consider the toric sequence $0\to
  S_A(\bolda)\stackrel{\cdot\del^\boldu}{\to} S_A\to
  Q:=S_A/S_A\cdot\del^\boldu\to 0$, and the Euler--Koszul functor
  attached to $-\boldb$.  By \cite[Prop.~5.3]{MMW05}, the induced
  contiguity morphism $c_{-\boldb-\bolda,-\boldb}\colon
  M_A^{-\boldb-\bolda}\to M_A^{-\boldb}$ is an isomorphism if and only
  if $-(-\boldb)$ is not quasi-degree of $Q$.  The quasi-degrees of
  $Q=S_A/\del^\boldu S_A$ are contained in a union of hyperplanes that
  meet $-\NN A$ and are parallel to a face of the cone $\sigma$. In
  particular, these quasi-degrees are disjoint to the interior points
  $\Int(\NN A)\ni\boldb$ of $\NN A$. It follows that
  $c_{-\boldb-\bolda,-\boldb}$ is an isomorphism for all $\boldb\in
  \Int(\NN_A)$.

  Now consider the composition
  \[
  c_{-\boldb,0}\circ c_{-\bolda-\boldb,-\boldb}\colon M_A^{-\bolda-\boldb}\to M_A^{-\boldb}\to M_A^0 
  \]
  with $\bolda,\boldb\in\Int(\NN A)$. The first map is an isomorphism, and so the image of the composition is just the image of $c_{-\boldb,0}$. For any two elements $\boldb,\boldb'\in\Int(\NN A)$, factoring $M_A^{-\boldb-\boldb'}\!\to M_A^0$ through $M_A^{-\boldb}$ or $M_A^{-\boldb'}$ shows that the images of $c_{-\boldb,0}$ and $c_{-\boldb',0}$ agree with the image of $c_{-\boldb-\boldb',0}$. In particular, they are equal. Since $\del^\boldu$ is in the image of $c_{-A\cdot\boldu,0}$,
    the image of $c_{-\boldb,0}$ contains all of $\Int(\NN A)$ whenever
    $\boldb\in\Int(\NN A)$. 
%
%
\end{proof}
It follows that for normal $S_A$ the submodule of $M_A^0$ generated by
any interior monomial of $S_A$ agrees with that submodule generated by
$S_A^{(0)}$. If $A$ is homogeneous, so that $\FL$ carries the mixed
Hodge module structure from $\TVmap_*{}^p\QQ^H_T$ to $\calM_A^0$,
the level $d$ part of $W_\bullet$ has the property that
$\TVmap_*{}^p\QQ^H_T/\FL^{-1}(W_d\calM_A^0)$ is supported on the
boundary tori. Thus, any section of this sheaf is killed by some power
of $x_1\cdots x_n$, so that each element of $M_A^0/W_d$ is killed by
some power of $\del_1\cdots \del_n$. That means that $W_d$
contains (the coset of) an interior monomial of $S_A$, and hence $W_d$
contains the submodule generated by $S_A^{(0)}$. Since $W_d$ is
simple, it cannot strictly contain it, so must be equal to
it.

As an aside, note that the Euler--Koszul homology module
$H_0^A(S_A^{(0)};0)$ associated to the interior ideal is the
underlying $D_A$-module to
$\FL(\TVmap_\dag{}^p\QQ^H_T)=\FL\DD(\TVmap_*{}^p\QQ^H_T)$. Indeed, it
follows from \cite{Walther1} that the dual of $M_A^0$ is
$M_A^{-\gamma}$ for some interior point of $\NN A$. Since $S^{(0)}_A$
is the direct limit of all principal ideals generated by interior
monomials, $H_0^A(S_A^{(0)};0)$ is the direct limit of all
$H_0^A(S_A\cdot\del^\boldu;0)$ with $\del^\boldu$ interior to $\NN
A$. It follows from Lemma \ref{lem:int} that the structure morphisms in the
limit are all isomorphisms. Thus, we can identify the morphisms
$H_0^A(\Int(\NN A);0)\to M_A^0$ and $\FL \Dmod\left(\TVmap_!{}^p\QQ^H_T\to
\TVmap_*{}^p\QQ^H_T\right)$, and the corresponding statement holds for any
face $\tau$ with lattice  $\tau_\ZZ$.

It is clear that $S_A^{(k)}\subseteq S_A^{(k+1)}$ and that the
quotient $S_A^{(k)}/S_A^{(k-1)}$ is the direct sum of the interior
ideals of the face rings $S_\tau$ for which $\dim(\tau)=d-k$. It follows
that $\gr^{W'}_k(M_A^0)$ surjects onto each $H_0^A(\Int(S_\tau);0)$,
the Euler--Koszul module defined over $D_A$ by the toric module formed
by the graded maximal submodule of the toric module
$S_\tau=S_A/\{\del_j\mid j\notin\tau\}$, see \cite{MMW05} for details. It therefore also surjects
onto the image of $H_0^A(\Int(S_\tau);0)$ in $H_0^A(S_\tau;0)$, the
underlying $D_A$-module corresponding to $\IC_{X_\tau}$ under the
monodromic Fourier transform.

If $\NN A$ is simplicial, Theorem \ref{thm:weightonhQ} implies that
$\gr^W_{d+k}(\TVmap_*{}^p\QQ^H_T)$ is the sum of intersection
complexes $\IC_{X_\tau}$ with $\dim(\tau)+k=d$, and each appears with
multiplicity one. Thus, $\gr^{W'}_k(M_A^0)$ surjects onto
$\gr^W_k(M_A^0)$ for all $k$ when $\NN A$ is simplicial. (We are not
asserting that this surjection is induced from a filtered morphism,
only that there is one; after all, we don't know $W$ at this
point). But $M_A^0$ is holonomic and by the Jordan--H\"older property
this implies that $W'=W$ when $\NN A$ is simplicial. This recovers for
$\beta\in\NN A$ a result of \cite{Fang}.

\medskip

Now suppose $d=3$ but don't assume simpliciality. (If $d\le
  2$, $\NN A$ is always simplicial).  By Theorem \ref{thm:weightonhQ}
any composition chain for $M_A^0$ will (up to Fourier transform)
have as composition factors exactly one copy of the intersection complex to $\tau$ for
$\dim(\tau)>0$, and $1+f_0-d$ copies of $\IC_0$. This means that an
epimorphism $\gr^{W''}(M_A^0)\to\gr^W(M_A^0)$ alone will not be enough to show $W''=W$
since the copies of $\IC_0$ need to be shown to live in the
right levels.

In any event, $W_6=M_A^0$ and $W_3$ is generated by the
interior ideal $S_A^{(0)}$. Equivariance and the fact that
$\gr^W_6$ must equal $\CC[x_A]$ shows that $W_5$ is
generated by the maximal ideal $S_A^{(2)}$. It remains to find
generators for $W''_4$, such that  there are surjections
$\gr^{W''}_k(M_A^0)\to \gr^W_k(M_A^0)$ for $k=4,5$ such that at least
one is an isomorphism. 

For arbitrary saturated $\NN A$ with $d=3$, define on
  $M_A^0=D_A/(I_A,E)$ a filtration as follows:
  \begin{itemize}
  \item $W''_i=0$ for $i<3$;
  \item $W''_3$ is the left ideal generated by $\del^+_\sigma$;
  \item $W''_4$ is the left ideal generated by $W''_3$ and all $\del^+_{\tau_2}$
    where $\dim(\tau_2)=2$, plus the left ideal generated by all $e_r$
    defined below, where $e$ runs through the $f_0$ vertices of $P$;
  \item $W''_5$ is the left ideal generated by $W''_4$ and all
    $\del^+_{\tau_1}$ where $\dim(\tau_1)=1$;
  \item $W''_6$ is the left ideal generated by $1\in D$.
  \end{itemize}

  We now describe the operators $e_r$. Choose \emph{distinguished}
  nonzero columns $\{\boldb_r\}_1^{f_0}$ of $A$ that correspond to the
  primitive lattice points on the rays through the vertices of the
  polygon $P$ (which are in $A$ since $\NN A$ is saturated). 

  For each distinguished $\boldb_r$ define a function $F_r$ on
  $A=\{\bolda_1,\ldots,\bolda_n\}$ as follows:
  \begin{eqnarray}\label{eqn-F}
    F_r(\bolda_j)&=&\left\{\begin{array}{cl}
    1&\text{if } \bolda_j=\boldb_r;\\
    c_{r,j}&\text{if } \bolda_j=c_{r,j}\cdot\boldb_r+
    c_{r',j}\cdot\boldb_{r'}\text{ is on the $\sigma$-face
      spanned by $\boldb_r$ and
      $\boldb_{r'}$;}\\
    0&\text{else.}\end{array}\right.
  \end{eqnarray}
  We call \emph{invisible from $\boldb_r$} any $\bolda\in\ZZ A$ for
  which the ray from $\boldb_r$ to $\bolda$ passes through the
  interior of $\sigma$. Then $F_r$ vanishes on all $\bolda_j$
  invisible from $\boldb_r$, and $F_r$ is piece-wise linear
  on the $2$-faces of $\sigma$ (which are in bijection with edges of
  $P$). Set
    \begin{gather}\label{eq-e_r}
  e_r=\sum_jF_r(\bolda_j)x_j\del_j.
  \end{gather}

We now show that our filtration $W''$ is indeed the Fourier--Laplace
transform of the weight
filtration on $\TVmap_*{}^p\QQ^H_T$.
Note first that $W''_5$ indeed contains $W''_4$
  (specifically, the $e_r$).
  We prove now, that each $e_r$ is annihilated by $\frakm$ in
  $M_A^0/W_3$, and hence they are candidates for the intersection complexes
  in $W_4/W_3$ with support in $0$.
  
  Let $\boldb_{r_1}$ and $\boldb_{r_2}$ be the two distinguished
  columns that lie on a facet with $\boldb_r$. Then there is a unique
  linear function $E_r$ on $\RR^3$ whose values agree with those of
  $F_r$ on $\boldb_r, \boldb_{r_1}$ and $\boldb_{r_2}$. We denote the
  corresponding Euler operator also by $E_r$. The linearity of $F_r$
  along facets implies that $F_r$ and $E_r$ agree on all $\bolda_j$
  that have $F_r(\bolda_j)$ nonzero (which are the $\bolda_j$ not
  invisible from $\boldb_r$). Thus, in
  $M_A^0/W''_3=D/(I_A,E,\del_\sigma^+)$, the expression $e_r$ is
  equivalent to a linear combination $e_{r,0}=e_r-E_r$ of
  $\{x_j\del_j\}_j$ for which each $\bolda_j$ with nonzero coefficient
  is invisible from $\boldb_r$. Now, in $D/(I_A,E,\del_\sigma^+)$, $\del_je_r$
  is zero for $\bolda_j$ invisible from $\boldb_r$, and
  $\del_je_{r,0}=0$ also for $\bolda_j$ any integer multiple of
  $\boldb_r$ and for $\bolda_j$ interior to the facets touching
  $\boldb_r$.  If $\bolda_j=\boldb_{r_1}$, consider the Euler operator
  $E_{r_1}$ that agrees with $e_r$ on $\boldb_r$ and on
  $\boldb_{r_1}$, and which takes value zero on the $2$-face of
  $\sigma$ containing $\boldb_{r_1}$ but not $\boldb_r$. Then
  $(e_r-E_{r_1})$ has all terms invisible from $\bolda_j$ and so
  $\del_j(e_r-E_{r_1})$ is zero in $W''_6/W''_3$. A similar argument
  works for $\bolda_j=\boldb_{r_2}$. Hence every $\del_j$ annihilates
  the class of $e_r$ in $W''_6/W''_3$ and so $e_r$ spans a module in
  $W''_6/W''_3$ that is either zero or $D/D\frakm$. Note that there
  are $d=\dim(\sigma)=3$ linear dependencies between the cosets of the
  $e_r$ in $M_A^\beta$, so the $\{e_r\}_r$ are spanning a module
  isomorphic to a submodule of $\oplus_1^{f_0-d}D/D\frakm$.

  Next, let $\bolda$ be in the relative interior of a facet $\tau$ of
  $\sigma$. Since $W_3$ contains every interior monomial of $\sigma$,
  the coset of $\del^\bolda$ in $M_A^0/W''_3$ is $\del_j$-torsion for
  all $j\not\in\tau$. Let $\TVmap_\tau\colon T_\tau\to\CC^\tau$ be the
  toric map, induced by the restriction of $A$ to $\tau$, from the
  $\tau$-torus to the subspace $\CC^\tau$ of $\CC^A$ parameterized by
  the columns of $A\cap\tau$. The submodule generated by $\del^\bolda$
  inside $M_A^0/W''_3$ is isomorphic to a quotient of
  the simple module $\CC[x_{\tau^c}]\otimes_\CC
  \FL\image((\TVmap_\tau)_\dag\to(\TVmap_\tau)_+)$, where $x_{\tau^c}$ are
  the $x_j$ with $j\not\in\tau$. 

  Now
  consider an interior monomial $\del^\bolda$ of a ray $\tau_1$ of
  $\sigma$. Then in $M_A^0/W''_4$, $\del^\bolda$ is killed by all
  $\del_j$ with $j\not\in \tau_1$.
  Modulo the $\del_j$ not sitting on any ray, $e_r$ becomes exactly the
  Euler operator for $M_{\tau_1}^0$ if $\boldb_r$ sits on $\tau_1$, and
  hence (after the Fourier transform) the module generated by $\del^\bolda$ in $M_A^0/W''_4$ is
  exactly the intersection complex associated to $\tau_1$ (pushed to
  $V$). Hence 
  $W''_5/W''_4\simeq\bigoplus_{\tau_1}\IC_{\tau_1}$, and so 
  %
  \begin{itemize}
  \item $W_k=W''_k$ if $k\le 3$ and if $k\geq 5$;
  \item $W''_5/W''_4\simeq W_5/W_4$;
  \item hence $W''_4/W''_3\simeq W_4/W_3$ by Jordan--H\"older.
  \end{itemize}
  Since the faces whose intersection complexes appear as summands in
  $W_5/W_4$ have dimension one, and those in
  $W_4/W_3$ have dimension $0$ or $2$, $W''_4$ must equal $W_4$. 

  \begin{ex}
    Let $A=\begin{pmatrix}1&1&1&1\\0&1&0&1\\0&0&1&1\end{pmatrix}$, one
    of the possible matrices whose GKZ-system (with the right $\beta$)
    contains Gau\ss' hypergeometric $\!{}_2F_1$ as solution. We have
    $n=4$ and $d=3$, and $P$ is a square in which $1$ and $4$ are
    opposite vertices. The Euler space $E$ is spanned by
    $x_1\del_1+x_3\del_3$, $x_2\del_2+x_4\del_4$ and
    $x_3\del_3+x_4\del_4$. The four elements $e_r$ are simply
    $\{x_j\del_j\}_1^4$. The toric ideal is generated by
    $\del_1\del_4-\del_2\del_3$. The interior ideal of $S_A$ is
    generated by $\del_2\del_3$. The weight filtration on $M_A^0$ is
    given by $W_2=\overline{0}$,
    $W_3=\overline{\{E,\del_1\del_4,\del_2\del_3\}}$,
    $W_4=W_3+\overline{\{\del_1\del_2,\del_2\del_4,\del_4\del_3,\del_3\del_1\}}+
      \overline{\{e_1,e_2,e_3,e_4\}}$,
    $W_5=W_4+\overline{\{\del_1,\del_2,\del_3,\del_4\}}$,
    $W_6=M_A^0$. Here, the bar indicates taking cosets on $M_A^0$. Note that the three Euler dependencies in $H_A(0)$
    imply that the four operators $e_r$ generate only one copy of
    $\IC_0$ inside $W_4/W_3$.
  \end{ex}
  
  \section{Concluding remarks and open problems}

  \begin{asparaenum}
    \item We assume throughout that $S_A$ is normal, which covers the
  most significant geometric situations. One obvious challenge is to
  remove this hypothesis and generalize our results. This would be
  likely difficult since then arithmetic issues will enter the fray.

  \item In another direction it would be interesting to see what can
    be done (as mixed Hodge module or otherwise) when $\beta\not
    =0$. In an article of Fang, composition chains for hypergeometric
    systems are considered that are based on the filtration-by-faces
    on the semigroup ring, see \cite{Fang} and refer to \cite{AS} for
    motivating discussion. This filtration (see Notation
    \ref{ntn-batyrev}) was first considered by Batyrev in \cite{Bat4},
    but see also \cite{Sti}. (We note in passing that the
      filtration-by-faces is not a natural filtration: typically, if
      GKZ-systems $M_A^\beta \simeq M_A^\gamma$ are isomorphic under a
      contiguity morphism, the two face filtrations do not
      correspond).  The hypotheses are somewhat technical, but in the
    simplicial normal case \cite{Fang} shows essentially that for
    $\beta=0$ the filtration-by-faces gives semisimple composition
    factors. Comparing with the weight filtration, this corresponds to
    all non-diagonal terms $\mu^\sigma_\tau(e)$ with
    $\dim(\tau)+e\not=d$ being zero in Theorem \ref{thm:weightonhQ},
    the case of trivial combinatorics in the polytope to $\sigma$.

  \item By adding all nonzero $\mu^\sigma_\tau(e)$ one obtains the
    holonomic length of $M_A^0$. Is there a compact formula? In
    particular, does it give a better estimate than the general
    exponential bounds in \cite{SST}? When $P$ is simplicial,
    $\ell(M_A^0)=2^d$, while for $d=3,4,5$ these lengths are for
    general $P$ as follows, where in generalization of the face
    numbers $f_i$ of $P$ we denote $f_{i,j}$ the number of all pairs
    ($i$-face, $j$-face) that are contained in one another. For
    relations between the various $f_{i,j}$ for $4$-polytopes, see
    \cite{Bayer-4polytopes}. 
    \[
    \begin{array}{||c|ll||}\hline
      d&&\ell(M_A^0)\\\hline\hline
      3&&1+f_0+f_1+f_2+(f_0-3)=3f_0-1\\\hline
      4&&1+f_0+f_1+f_2+f_3+(f_0-4)+(f_{1,0}-3f_0)\\
      &=&-2f_0+4f_1\\\hline
      5&&1+f_0+f_1+f_2+f_3+f_4+(f_0-5)+(f_{1,0}-4f_0)+(f_{2,1}-3f_1)
      +(f_{2,0}-3f_2+f_1-4f_0+10)\\
      &=&7-5f_0-f_2+2f_{2,0}\\\hline\hline\end{array}
      \]
      Of course, all these numbers are non-negative. Is there an
      obviously non-negative representation that is more intelligible
      than $\sum\mu^\sigma_\tau(e)$?
    
\item Given $A$ and a face $\tau$, what is the holonomic rank of the
  Fourier transform of the intersection complex on the orbit to
  $\tau$? Such formul\ae\ would be very interesting even for normal
  simplicial $A$ since it interweaves volume-based expressions for
  rank with combinatorial expressions in the way Pick's theorem talks
  about polygons. For example, when $d=2$ and $A$ is normal, one can
  derive from our results that the rank of $\FL(\mcm^{\IC}(X_\tau))$
  always differs from the volume of $A$ by one. Induction on $d$ gives
  recursions, but an explicit formula is unknown.

\item In Section \ref{sec-d=3} we explained how to write down
  explicitly the weight filtration for $d=3$. For $d=4$, similar ideas
  can be used to write out explicit generators. But starting with
  $d=5$ this seems a very hard problem. Part of the issue is that
  writing down such filtration would produce a non-canceling
  expression for the higher intersection cohomology dimension of
  polytopes of dimension 4 or greater, which we do not think are known.
\end{asparaenum}

\section*{List of symbols}
\begin{itemize}
\item $\CC^n=V=\Spec\CC[x_1,\ldots,x_n]$, the domain of the GKZ system $M_A^\beta$,
\item $\CC^n=\hatV=\Spec[y_1,\ldots,y_n]$, the target of $\TVmap$, 
\item $T$ the $d$-torus,
\item $\TVmap\colon T\to \hatV$ the monomial map induced by $A$,
\item $X$ the closure of $T$ in $\hatV$,
\item $\TXmap\colon T\to X$ the restriction of $\TVmap$,
\item $\deg(x)=\bolda=-\deg(\del)$ the $A$-degree function on
  $S_A=\CC[\NN A]$,
\item $\phi_z$ the vanishing cycle along the function  $z$,
\item $\psi_z$ the corresponding  nearby cycle,
\item $\XVmap\colon X\to V$ the closed embedding,
\item $i_\tau\colon \frakx_\tau\into X_\tau$ the embedding of the
  $T$-fixed point,
\item $\rho_\tau\colon \frakx_{\sigma/\tau}\times T_\tau
  \into X_\sigma$ and $j_\tau\colon X_{\sigma/\tau}\times T_\tau\into X_\sigma$
from $\NN A\to  (\sigma/\tau)_\NN\oplus \tau_\ZZ$,
\item the relative version $j_\tau^\gamma\colon X_{\gamma/\tau}\times
  T_\tau\to X_\gamma$ to $j_\tau$,
\item $i_{\tau,\gamma}\colon T_\tau\to X_\tau\to X_\gamma$ from
  $\gamma_\NN\onto \tau_\NN\to\tau_\ZZ$,
\item $\kappa_\boldv\colon \mbg_m = \Spec \mbc[z^\pm] \ra T = \Spec
  \mbc[\mbz A]$ the monomial action induced by $\boldv$,
\item $u_\tau\colon X_\tau\smallsetminus \frakx_\tau\into X_\tau$,
 \item $\XVmap_\tau\colon \frakx_\tau\to V$.
\end{itemize}

\bibliographystyle{amsalpha}
\bibliography{weight}
\end{document}